\newlength{\notewidth}
\newcommand{\s}{\small}
\newtheorem{lemma}{Lemma}[section]
\newtheorem{prop}[lemma]{Proposition}
\newtheorem{thm}{Theorem}
\newtheorem{cor}[lemma]{Corollary}
\newtheorem{conjecture}[lemma]{Conjecture}
\theoremstyle{definition}
\newtheorem{rmrk}[lemma]{Remark}
\newcommand{\T}{{\mathbf T}}
\newcommand{\N}{{\mathbf N}}
\newcommand{\B}{{\mathbf B}}
\newcommand{\R}{{\mathbb R}}
\newcommand{\x}{{\bf x}}
\newcommand{\bfe}{{\bf n}}
\newcommand{\y}{{\bf y}}
\renewcommand{\v}{{\bf v}}
\newcommand{\p}{{\bf p}}
\renewcommand{\r}{{\bf r}}
\newcommand{\K}{{\bf K}}
\newcommand{\Iso}{\mathrm{Iso}}
\newcommand{\mn}{\medskip\noindent}
\newcommand{\be}{\begin{equation}}
\newcommand{\ee}{\end{equation}}
\newcommand{\bp}{\begin{proof}}
\newcommand{\ep}{\end{proof}}
\newcommand{\Kh}{Kirchhoff\ }
\newcommand{\Kr}{Kirchhoff rods}
\newcommand{\D}{{\mathscr D}}
\newcommand{\sR}{sub-Riemannian\ }
\renewcommand{\u}{{\mathbf u}}
\newcommand{\so}{\Rightarrow}
\title{Bicycling geodesics are \Kh rods}
\author{Gil Bor\footnote{
CIMAT, A.P. 402, Guanajuato, Gto. 36000, Mexico; 
gil@cimat.mx
}
\and
Connor Jackman\footnote{
CIMAT, A.P. 402, Guanajuato, Gto. 36000, Mexico; 
connor.jackman@cimat.mx
}
\and
Serge Tabachnikov\footnote{
Department of Mathematics,
Penn State University, 
University Park, PA 16802;
tabachni@math.psu.edu}
}
\date{\today}
\begin{document}

\maketitle

\begin{abstract}
A bicycle path is a pair of trajectories  in $\R^n$, the `front' and `back' tracks,   traced out by the endpoints of a moving line segment of fixed length (the `bicycle frame') and  tangent to the back track. Bicycle geodesics are bicycle  paths whose front track's length is critical among all bicycle paths connecting two given   placements of the line segment. 

We write down and study the associated variational equations, showing that for $n\geq 3$ each such geodesic is contained in a  3-dimensional affine subspace  and that the front tracks of these geodesics form a certain subfamily of  {\em  \Kh rods}, a class of curves introduced  in 1859 by G. Kirchhoff, generalizing the planar elastic curves of   J. Bernoulli  and L. Euler.    

\end{abstract}

\tableofcontents
\section{Introduction}

\paragraph{Bicycling geodesics.} Consider the motion of  a directed line segment of unit length in $n$-dimensional Euclidean space $\R^n$, $n\geq 2$.  As the segment moves, its end points trace a pair of trajectories,  the  {\em front} and {\em back} tracks. We consider motions satisfying the {\em no-skid} condition: {\em at each moment the line segment is tangent to the back track.} That is, if  $\x(t)$ and $\y(t)$ are the front and back tracks, respectively, and $\v(t):=\x(t)-\y(t)$ is the direction of the line segment (the `bike frame'), then  $|\v(t)|=1$ and  $\y'(t)$ is parallel to $\v(t)$ for all $t$. 

 Such a motion is called a {\em bicycle path}. For $n=2$ this is the simplest model for bicycle motion, hence the terminology, see Figure \ref{fig:bicycle}. A justification of this model is that the rear wheel of a bicycle is fixed on its frame. The same model describes hatchet planimeters. See \cite{FLT} for a survey.

 \begin{figure}[h]\centering
\def\svgwidth{.6\textwidth}
\begingroup%
  \makeatletter%
  \providecommand\color[2][]{%
    \errmessage{(Inkscape) Color is used for the text in Inkscape, but the package 'color.sty' is not loaded}%
    \renewcommand\color[2][]{}%
  }%
  \providecommand\transparent[1]{%
    \errmessage{(Inkscape) Transparency is used (non-zero) for the text in Inkscape, but the package 'transparent.sty' is not loaded}%
    \renewcommand\transparent[1]{}%
  }%
  \providecommand\rotatebox[2]{#2}%
  \newcommand*\fsize{\dimexpr\f@size pt\relax}%
  \newcommand*\lineheight[1]{\fontsize{\fsize}{#1\fsize}\selectfont}%
  \ifx\svgwidth\undefined%
    \setlength{\unitlength}{312.31203461bp}%
    \ifx\svgscale\undefined%
      \relax%
    \else%
      \setlength{\unitlength}{\unitlength * \real{\svgscale}}%
    \fi%
  \else%
    \setlength{\unitlength}{\svgwidth}%
  \fi%
  \global\let\svgwidth\undefined%
  \global\let\svgscale\undefined%
  \makeatother%
  \begin{picture}(1,0.2328409)%
    \lineheight{1}%
    \setlength\tabcolsep{0pt}%
    \put(0,0){\includegraphics[width=\unitlength,page=1]{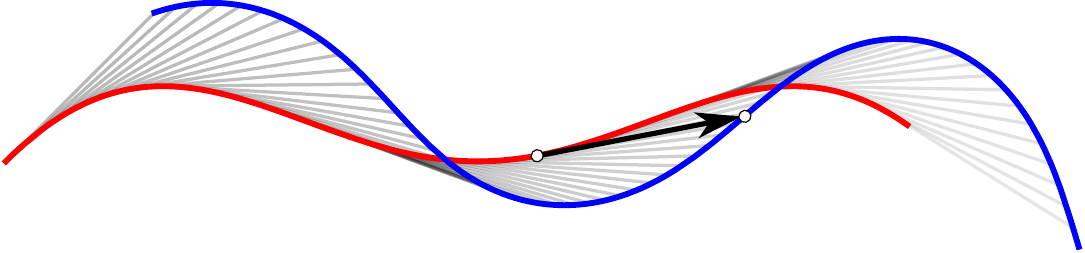}}%
    \put(0.42427116,0.11273481){\color[rgb]{0,0,0}\makebox(0,0)[lt]{\lineheight{0}\smash{\begin{tabular}[t]{l}\s$\y(t)$\end{tabular}}}}%
    \put(0.69244535,0.08819999){\color[rgb]{0,0,0}\makebox(0,0)[lt]{\lineheight{0}\smash{\begin{tabular}[t]{l}\s$\x(t)$\end{tabular}}}}%
    \put(0,0){\includegraphics[width=\unitlength,page=2]{bicycling1.pdf}}%
    \put(0.5429612,0.13260149){\color[rgb]{0,0,0}\makebox(0,0)[lt]{\lineheight{0}\smash{\begin{tabular}[t]{l}\s$\v(t)$\end{tabular}}}}%
  \end{picture}%
\endgroup%

\caption{A bicycle path: 
as the line segment (``bicycle'')  moves, its end points trace the front $\x(t)$ (blue) and back $\y(t)$ (red) tracks such that  the the direction $\v(t)$ of the  line segment is tangent at each moment to the back track.} \label{fig:bicycle}
\end{figure}

We define the  {\em length} of such a path  as the (ordinary) length of its front track.  We ask: {\em  what  are the  bicycling geodesics?} These  are  paths with critical   length among  bicycle  paths  connecting two given placements of the line segment. 

The article \cite{2D} answered this question for $n=2$. The answer is that the front tracks of bicycling geodesics  are arcs of {\em non-inflectional elastic curves}, a well-known class of curves studied first by J. Bernoulli (1694) and by L. Euler (1743).

In the present article we answer this question for general $n$; it turns out that  it is enough to consider the $n=3$ case, and that the front tracks of these bicycle geodesics are  {\em  \Kh rods}, a class of curves introduced  in 1859 by G. \Kh \cite{K}, then studied extensively by many others.   
See, e.g., \cite[Chap. 5]{O} or \cite{LS} (our main reference).   Let us review this material briefly.

 \begin{figure}[h]\centering
\includegraphics[width=0.9\textwidth]{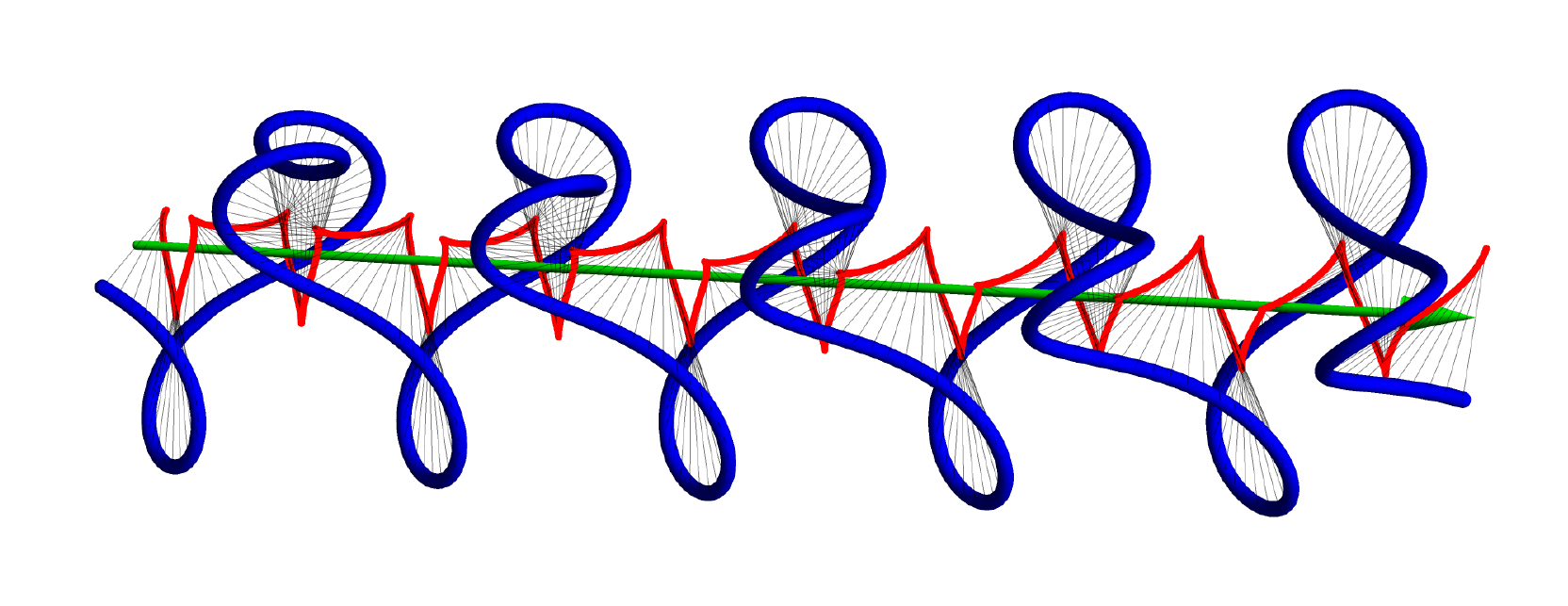}
\caption{A bicycle geodesic in $\R^3.$ The front track is blue and the back  track is red. The front track is also the trajectory of a Killing magnetic field, whose axis is marked in green.} \label{fig:geod6}
\end{figure}

 
\paragraph{\Kh rods.} These are curves in $\R^3$ which   are 
extrema of the total squared curvature    (`bending energy') among curves with fixed  end points,     total torsion, and  length. 
Accordingly, one defines the functional
$$\gamma\mapsto \int_\gamma \left(\kappa^2+\lambda_1\tau+\lambda_2\right)d s,$$
where $\lambda_1, \lambda_2$ are Lagrange multipliers, 
 and studies the associated variational  equations. The result is a 4-parameter family of  space curves  (up to rigid motions), either straight lines or curves whose curvature $\kappa$ and torsion $\tau$, as functions of arc length, satisfy
\begin{align}
& \kappa'' = \kappa\left[2a_1 + \tau(\tau -a_2)  -  \frac{\kappa^2}{2}\right],\label{eq:rods1}\\
&\kappa^2 (2 \tau-a_2)=a_3, \label{eq:rods2}\
\end{align}
where $a_1,a_2, a_3\in\R$.  The  ODE \eqref{eq:rods1} admits an `energy conservation law', 
\be\label{eq:energy}
(\kappa ')^2+\frac{1}{4} \left(\kappa^2- 2a_1\right)^2+\kappa^2 (\tau-a_2)^2=(a_4)^2, 
\ee
for some  $a_4\in\R$. 
See for example \cite[\S4]{LS}.\footnote{In \cite[\S4]{LS} there appear 5 parameters, $\lambda_1, \lambda_2, \lambda_3, c,j,$ but  $\lambda_3$ is superfluous  and can be set to $\lambda_3=1$, and the rest of the parameters are related to ours  by  $\lambda_1=a_1, \lambda_2=a_2, c=a_3, j=a_4$.}

\begin{rmrk} 
Note that equation  \eqref{eq:rods1} cannot be replaced with \eqref{eq:energy}, since there are solutions of \eqref{eq:rods2}-\eqref{eq:energy} with constant $\kappa, \tau$ which are not solutions of \eqref{eq:rods1}-\eqref{eq:rods2};  however, for solutions with non-vanishing $\kappa'$,  equations \eqref{eq:rods1} and \eqref{eq:energy} are equivalent.
\end{rmrk}

\mn 

Among \Kr, {\em elastic curves}  are those with $a_2=0$.   Planar \Kr,  i.e., those with $\tau=0$,  are planar elastic curves, satisfying  $a_2=a_3=0$. See Figure \ref{fig:elasticae}.

\begin{figure}[ht]
\centering
\includegraphics[width=\textwidth]{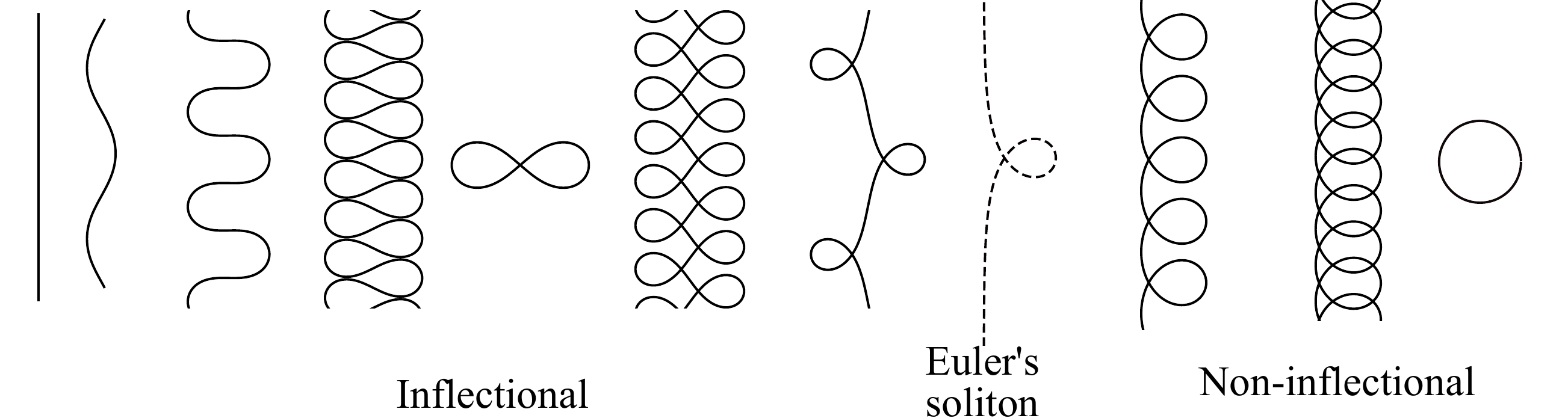} 
\caption{ The family of planar elastic curves.  }
\label{fig:elasticae}
\end{figure}

One can use equations \eqref{eq:rods1}-\eqref{eq:energy} to write  a single ODE for $u:=\kappa^2$ of the form $(u')^2=P(u)$, where $P$ is a cubic polynomial. It follows that $u,$ and therefore $\kappa$ and $\tau$, are {\em elliptic functions} (doubly periodic in the complex domain) so that  the curves themselves are {\em quasi-periodic}, i.e., $\x(t+T)=M (\x(t)),$ for some $T>0$, $M\in\Iso(\R^3)$ and all $t$.  The isometry $M$ is called the {\it monodromy} of the geodesic.

Another useful characterization of Kirchhoff rods is as the trajectories of a {\em charged particle in a Killing magnetic field}; that is, the solutions $\x(t)$  of 
\be\label{eq:mag}
\x''= \x'\times\K,\ \mbox{where } \K= (\x -\x_1)\times \p + \delta \p
\ee
for some fixed $\p,\x_1\in\R^3$, $\delta\in\R$. The vector  field $\K$ is called  `Killing', or an `infinitesimal isometry', since it generates screw-like  rigid motions about a fixed  line, the line passing through $\x_1$ in the direction of $\p$. Note that for $\delta=0$ and $\x'(0)\|\p$ the trajectory is a planar elastica. 
See  \cite{Magnetic, DM}. 

 \paragraph{The main result.}

\begin{thm}\label{thm:main}
\begin{enumerate}[{\rm (a)}]
\item  \label{thm:main_a}The front and back tracks of each bicycling geodesic  in $\R^n$, $n\geq 3$,  are contained in a 3-dimensional affine subspace. 

\item \label{thm:main_b}Front tracks of   bicycling geodesics in $\R^3$   are either straight lines or curves whose curvature and torsion functions satisfy
\begin{align*}
 &\kappa'' = \kappa\left[ \tau(\tau - b) + \frac{1 + a^2 - \kappa^2}{2}\right], \\ 
 &\kappa^2 (2 \tau-b)=b(a^2-1),
\end{align*}
and such that 
$$
 (\kappa ')^2+\frac{1}{4} \left( 1 + a^2 - \kappa^2 \right)^2+\kappa^2 (\tau-b)^2=a^2+b^2,
$$
where $a,b\in\R$. These curves comprise  a 2-parameter subfamily of \Kr, solutions to equations \eqref{eq:rods1}-\eqref{eq:energy},  with parameter values 
\be\label{eq:ab} a_1={1+a^2\over 2}, \ a_2=b , \ a_3=b(a^2-1),\ a_4=\sqrt{a^2+b^2}.\ee

\item \label{thm:main_c}A unit speed bicycle path $(\x(t), \y(t))$ in $\R^3$, that is, 
$$
|\x'(t)|=1,\ |\x(t)-\y(t)|=1,\ \y'(t)\| (\x(t)-\y(t)) \mbox{ for all } t, 
$$
with   initial conditions
$\x_0,  \x'_0, \y_0\in\R^3$, 
 is a bicycling geodesic if and only if  the front track $\x(t)$  is either a unit circle with $\y(t)$ fixed at its center, or a solution to equations
\eqref{eq:mag}
with  
$$\p\neq 0,\ \v_0\cdot(\x'_0 - \p) = 0,\ \x_1 = \y_0 +{(\x'_0\times \v_0)\times\p\over |\p|^2},\ \delta = {(\x'_0\times \v_0)\cdot\p\over |\p|^2}.$$ 
The  parameters $a,b$ of the previous item are given   by   
$$
a^2+b^2=|\p|^2,\ b=-(\x_0'\times\v_0)\cdot \p.
$$
\end{enumerate}
  \end{thm}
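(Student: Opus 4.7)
The plan is to set up the bicycling problem as a constrained variational problem via Lagrange multipliers, derive the Euler--Lagrange system, use $\Iso(\R^n)$-symmetry and Noether's theorem to extract conserved linear and angular momenta $\p\in\R^n$ and $M\in\wedge^2\R^n$, and then exploit these conservation laws to confine the motion to an affine 3-space and, in $\R^3$, match the reduced equations with the Killing magnetic-field formulation of \Kr.

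First, I would parameterize each bicycle path by the arc length $s$ of its front track, so that $|\x'|=1$ and the length to minimize is simply $\int ds$. The bicycle constraints read $|\v|^2=1$ and $\y'=(\x'\cdot\v)\v$ with $\v=\x-\y$. Augment the length functional by a vector Lagrange multiplier $\lambda(s)\in\R^n$ enforcing the no-skid ODE, together with scalar multipliers for $|\v|^2=1$ and $|\x'|^2=1$. Varying in $\x,\y$ and the multipliers gives a closed first-order ODE system for $(\x,\y,\lambda)$. Euclidean invariance then furnishes, via Noether, explicit conserved $\p$ and $M$ expressible in terms of $(\x,\y,\x',\lambda)$ at any $s$.

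The main obstacle --- and the heart of the proof of part \ref{thm:main_a} --- will be to extract, from the conservation of $\p$ and $M$ together with the structure of the EL equations, the fact that the 3-plane $V := \mathrm{span}(\x'_0, \v_0, \p)$ is invariant under the Euler--Lagrange flow, so that $\x'(s),\v(s)\in V$ for all $s$. Equivalently, one must show that $M$ has rank at most $2$ with image in $V$ and that the antisymmetric operator built from $\p$ and $M$ that appears in the equation for $\x''$ preserves $V$. Once this is in hand, both tracks lie in the affine 3-subspace $\x_0+V$, proving \ref{thm:main_a}; the remaining identifications are then a matter of bookkeeping.

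Restricting to $n=3$, the bivector $M$ dualizes to a vector and the EL equation for $\x$ takes the form $\x''=\x'\times\K$ with $\K=(\x-\x_1)\times\p+\delta\p$, which is exactly the Killing magnetic-field equation \eqref{eq:mag}. Evaluating $\p$ and $M$ at $s=0$ in terms of $(\x_0,\x'_0,\y_0)$ and solving algebraically yields the stated formulas for $\x_1,\delta$ and the invariants $a,b$, proving \ref{thm:main_c}; the degenerate unit-circle case arises from the abnormal extremal $\p=0$ and is checked directly. Finally, for \ref{thm:main_b}, expand $\x'\times\K$ in the Frenet frame of $\x$ and differentiate: the tangential component yields a first integral equivalent to \eqref{eq:rods2}, the normal component combined with $\kappa'$ gives \eqref{eq:rods1}, and $|\K|^2$ supplies the energy relation \eqref{eq:energy}; matching coefficients against \eqref{eq:rods1}--\eqref{eq:energy} then delivers the identifications \eqref{eq:ab}.
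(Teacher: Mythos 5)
Your overall strategy — variational/Hamiltonian formulation, Euclidean symmetry furnishing conserved momenta, confinement to a $3$-space, reduction to a Killing-magnetic-field equation, Frenet bookkeeping — runs parallel to the paper's. But there are two substantive gaps.

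First and most important: the bicycle problem is a sub-Riemannian geodesic problem, and in sub-Riemannian geometry a geodesic need not satisfy the normal (Hamiltonian / Lagrange-multiplier) equations at all; there can be \emph{abnormal} geodesics that are invisible to the Euler--Lagrange system you write down. Your proposal derives the EL equations and treats their solutions as exhausting the bicycling geodesics, which leaves the ``only if'' direction of part (c) unproved. The paper devotes an entire subsection (the lemmas around Proposition \ref{prop:ab} and Corollary \ref{cor:ab}) to classifying the singular curves of $(Q,\D)$ for $n\ge 3$ and verifying that every geodesic among them is simultaneously a normal geodesic; your argument would need an analogous step before the characterization is complete. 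Relatedly, you say the unit-circle case ``arises from the abnormal extremal $\p=0$'' --- this conflates two unrelated things. In the paper's framework $\p=0$ is a perfectly good normal solution of the Hamiltonian system (the back wheel sits still, the front wheel circles it); the abnormal/singular curves are something else entirely (back wheel fixed, $\v$ sweeping a great circle in a hyperplane $\p^\perp$ with $\p\neq 0$), and showing those are also normal is the point of the missing subsection.

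Second, the key step for part (a) is announced but not executed. You identify the obstacle --- showing $V=\mathrm{span}(\x_0',\v_0,\p)$ is invariant under the flow --- but you don't actually supply the conserved quantity that does it. The paper's mechanism is that the trivector $\p\wedge\v\wedge\x'$ is constant along the Hamiltonian flow (Lemma \ref{lemma:p}), which immediately pins $\v$ and $\x'$ to the fixed $3$-plane containing $\p$. If you pursue the Noether route with linear and angular momenta $\p$ and $M$, you would need to show how the rank-$\le 2$ property of $M$ and its compatibility with the $\D$-constraint together force the same conclusion; as written this is a plausible sketch, not a proof. The remainder of your outline (writing $\x''=\x'\times\K$ in $n=3$, expanding in the Frenet frame, matching to \eqref{eq:rods1}--\eqref{eq:energy}) is consistent with what the paper does for parts (b) and (c) once the normal equations are in hand.
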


\begin{figure}[h]\centering
\includegraphics[width=\textwidth]{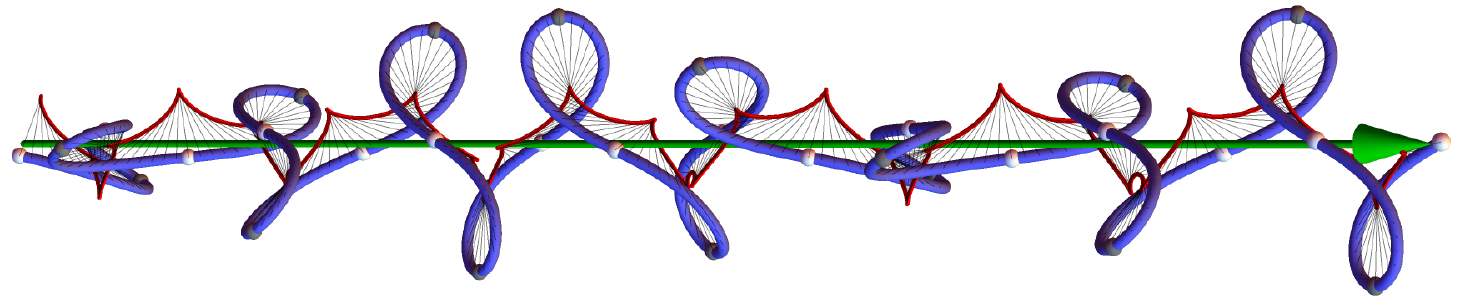}
\caption{A bicycling geodesic with front track  (blue) of constant torsion (the curvature function is that of a planar inflectional elastic curve). Points with vanishing $\kappa$ (inflection points)  are marked with light marks, maxima and minima  of $\kappa$ are marked with dark marks.  The back track is red. The green axis is the symmetry axis of the associated  monodromy and the magnetic field $\K$ of equation \eqref{eq:mag}.} \label{fig:const_torsion}
\end{figure}

\paragraph{Additional results.} A detailed description of   bicycling geodesic involves the following results proved in Section \ref{sect:add}:
\begin{itemize}
\item Front tracks of planar geodesics are the solutions of Theorem \ref{thm:main}(b) with $b=0$,  or unit circles ($a=0$). As shown in  \cite{2D}, these  front tracks are {\em non-inflectional elasticae,} see Proposition \ref{prop:planar}(a) and Figure \ref{fig:elasticae}. 

\item Solutions with $|a|=1$ correspond to front tracks with constant torsion $\tau=b/2,$ and whose  curvature is that of planar {\em inflectional elasticae}. 
See Proposition \ref{prop:planar}(b) and Figure \ref{fig:const_torsion}.

\item Each non-planar  geodesic front track comes in a `fixed size'. That is, no two such curves are related by a similarity transformation. This is unlike the planar case, where each front track, except  circle, straight line and Euler soliton,  come in two `sizes', `wide' and `narrow', see \cite[\S4]{2D}. The closest thing to it, for non-planar geodesic front tracks,  is a `torsion-shift-plus-rescaling' transformation, see Proposition    \ref{prop:inv}.

\item The only globally minimizing bicycle geodesics are the planar minimizers, i.e., those geodesics whose front tracks are either a line or an Euler's soliton. See Proposition \ref{prop:global}. 

\item There are no closed bicycling geodesics except those with circular front tracks. See Proposition  \ref{prop:closed}. 

\item Bicycle geodesics, like all \Kh rods,  can be expressed explicitly in terms of elliptic functions. See the Appendix. 

\item Back tracks of  bicycling geodesic are  determined by their front tracks.  (Exception: linear front tracks.) See Proposition \ref{prop:bt}. 

\item 
Given a rear bicycle track,  bicycle correspondence between   front tracks is the result of reversing the direction of the bicycle frame, see \cite{BLPT}. Bicycle correspondence defines  an isometric involution on the bicycle configuration space, acting on the front and back tracks of geodesics by  an isometry whose second iteration is the monodromy of the tracks involved. See Proposition \ref{prop:bc}.

\end{itemize}


\paragraph{Acknowledgments.} We are grateful to  David Singer for  a suggestion that led to Proposition \ref{prop:inv}. The referee helped us realize that Section \ref{ss:ab} was needed. In writing it, conversations  with Richard Montgomery helped us understand the mysteries of abnormal geodesics in \sR geometry and his wonderful book \cite{M}.  GB acknowledges support from CONACYT Grant A1-S-45886. ST was supported by NSF grant DMS-2005444.

\section{Proof of Theorem \ref{thm:main}}

\subsection{A \sR\ reformulation} 


We start by  reformulating bicycle paths and geodesics   in the language of \sR geometry. Our main reference here is  Chapter 5 of the book \cite{M}.

Denote by $\v:=\x-\y\in S^{n-1}$ the frame direction and by 
$$Q:=\{(\x,\v)\,|\, |\v|=1\}=\R^n \times S^{n-1}$$ the bicycling configuration space. The no-skid condition defines an 
$n$-distribution $\D$ on  $Q,$
 that is, a rank $n$ sub-bundle  $\D\subset TQ$, so that  
bicycle paths are   curves   in $Q$ tangent everywhere  to $\D$.

\begin{lemma} \label{lemma:distr}
$\D$ consists of vectors $(\x', \v')\in T_{(\x, \v)}Q$ satisfying
\be\label{eq:noskid}\v'=\x'-(\x'\cdot\v)\v.\ee
\end{lemma}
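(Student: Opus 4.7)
The plan is to derive \eqref{eq:noskid} by simply translating the no-skid condition (together with the unit length constraint) into the tangent-space language of $Q$, and then verify the converse. Since $Q = \R^n \times S^{n-1}$, a tangent vector $(\x',\v')$ at $(\x,\v)$ automatically satisfies $\v\cdot\v' = 0$ (differentiating $|\v|^2 = 1$), so the only extra content of $\D$ is the no-skid constraint.

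First I would set up the direct implication. Given a bicycle path, write $\y = \x - \v$, so $\v = \x - \y$ and $\y' = \x' - \v'$. The no-skid condition says $\y' = \lambda\v$ for some scalar $\lambda(t)$, which gives $\v' = \x' - \lambda\v$. Taking the dot product with $\v$ and using $\v\cdot\v' = 0$ and $|\v|=1$ yields $\lambda = \x'\cdot\v$, producing exactly \eqref{eq:noskid}.

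Next I would handle the converse: given $(\x',\v') \in T_{(\x,\v)}Q$ satisfying \eqref{eq:noskid}, define $\y := \x - \v$ along the curve. Then $\y' = \x' - \v' = (\x'\cdot\v)\v$, which is parallel to $\v$, so the no-skid condition holds. The unit length condition is automatic on $Q$.

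There is no serious obstacle here; the only minor point to be careful about is ensuring that the formula is self-consistent, i.e., that the right-hand side of \eqref{eq:noskid} is indeed orthogonal to $\v$ (it is, since $\v\cdot[\x' - (\x'\cdot\v)\v] = \x'\cdot\v - (\x'\cdot\v) = 0$), so the equation really defines a vector tangent to $Q$. Counting dimensions: \eqref{eq:noskid} expresses $\v'$ as a function of $\x'$ (with $\x'$ free in $\R^n$), so $\D$ has rank $n$, matching the claim in the main text.
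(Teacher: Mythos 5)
Your proof is correct and follows essentially the same route as the paper's: both translate the no-skid condition $\y' \parallel \v$ into a formula for $\v'$ by using $\y = \x - \v$ and the constraint $\v \cdot \v' = 0$; the only cosmetic difference is that you introduce the scalar $\lambda$ and solve for it, whereas the paper directly identifies $\y'$ with the orthogonal projection $(\x'\cdot\v)\v$ in one step.
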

 
 The proof appeared before, e.g., in  Proposition 2.1 of \cite{BLPT},  or Lemma 4.1 of \cite{2D}. Since the proof is quite short, and our notation here differs slightly from that of these references, we reproduce it here. 
\begin{proof} Let $(\x(t), \v(t))$ be a path in $Q$ and $ \x'_\| :=(\x'\cdot\v)\v$ the orthogonal projection of the front track velocity on the frame direction. The   no-skid condition is then $\y'=\x'_\|.$ From  $\v=\x-\y$ it follows that  $\y'=\x'_\|$ is equivalent to $\v' =\x'-\x'_\|,$    which is equation \eqref{eq:noskid}. 
\end{proof}

 
 The {\em \sR length} of  a vector $(\x', \v')\in \D$ is, by definition,  the length $|\x'|$ of its projection to $\R^n$.  This defines a \sR\ structure $(Q,\D, g)$, where  $g$ is  a positive definite quadratic form on $\D$, the restriction to $\D$ of the pull-back to $Q$ of the standard riemannian metric on $\R^n$ under the `front wheel' projection $Q\to \R^n$, $(\x,\v)\mapsto \x.$ In this language, bicycle geodesics are the geodesics of $(Q,\D, g)$, that is,  curves in $Q$ tangent to $\D$ whose length between any two fixed points on them is critical among curves in $Q$ tangent to $\D$ whose end points are these fixed points.  
 In fact, as in the Riemannian case,   if the geodesic arc  is sufficiently short then it is 
minimizing between its endpoints.  See Theorem 1.14 on page 9 of \cite{M}. 

 Now in general, \sR geodesics  are either  {\em normal} or {\em abnormal}. Normal geodesics always exist and are given by solutions of an analog of the usual geodesic equations in Riemannian geometry. We shall next derive these equations for bicycle geodesics in Lemma \ref{lm:ham1} below. Abnormal \sR geodesics do not have a Riemannian analog and are harder to pin down. We shall later show, in section \ref{ss:ab},  that abnormal bicycle geodesics in fact do not exist, i.e., all bicycle geodesics satisfy the geodesic equations, see Corollary  \ref{cor:ab}. 

 \subsection{The bicycle  geodesics equations} 
The (normal) geodesic equations on a \sR\ manifold $(Q,\D,g)$ are derived via a  Hamiltonian formalism, as follows. 
 
 One fixes  an orthonormal frame  $\xi_i$ for $\D$ and  let $P_i:T^*Q\to \R$ be the associated   fiber-wise linear momentum functions, $$P_i(\alpha):=\alpha(\xi_i),\ \alpha\in T^*Q.$$
One  forms the Hamiltonian $H:=(1/2)\sum_i(P_i)^2$ on $T^*Q$ and the associated Hamiltonian vector field $X$ (with respect to the standard symplectic structure on $T^*Q$). The  \sR normal geodesics are then the  projection  to $Q$ of the   integral curves of $X$.  See Definition 1.13 on page 8 of \cite{M}. We shall now follow this recipe in our case. 

Let $\partial_{x_i}$ be the standard basis in $\R^n$ and $\bfe=\sum_{j=1}^n v_j\partial_{v_j}$ be the unit normal along $S^{n-1}\subset \R^n.$ Then, by   Lemma \ref{lemma:distr}, the vectors
\be\label{eq:xi}
\xi_i:=\partial_{x_i} + \partial_{v_i} - v_i \bfe,\ i=1,\ldots, n,
\ee
form an orthonormal basis of $\D$. 

Using the Euclidean structure on $\R^n$  we identify 
$$
T^*(\R^n\times\R^n)=T(\R^n\times\R^n),\ T^*Q=TQ,
$$
 so that $T^*Q\subset T^*(\R^n\times\R^n)$ is a symplectic submanifold.  Let $p_i,r_i$ be the momenta coordinates on $T^*(\R^n\times\R^n)$ dual to $x_i, v_i$; that is, if $\alpha\in T^*(\R^n\times\R^n)$ then $p_i(\alpha):=\alpha(\partial_{x_i}),\  r_i(\alpha):=\alpha(\partial_{v_i}).$ We shall use the same letters $x_i,v_i,p_i,r_i$ to denote the restriction of these functions to $T^*Q$. 

\mn {\bf Notation.} We shall use a vector notation throughout: 
\begin{equation*}
\begin{split}
{\bf a}=(a_1, \ldots, a_n),\ {\bf a}\cdot{\bf b}=\sum_{i=1}^n a_ib_i,\ {\bf a}\,\partial_\x=\sum_{i=1}^n a_i\partial_{x_i},\\
 \omega=d\x\wedge d\p+d\v\wedge d\r=\sum_{i=1}^n dx_i\wedge dp_i+dv_i\wedge dr_i, 
\end{split}
\end{equation*}
 etc.

\begin{lemma}\label{lemma:rv}$T^*Q\subset T^*(\R^n\times\R^n)$ is given, in the coordinates $\x,\v,\p,\r$,  by $$\v\cdot\v=1,\quad  \r\cdot\v=0.$$
\end{lemma}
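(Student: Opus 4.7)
The plan is to unwind the two successive Euclidean identifications behind the inclusion $T^*Q \subset T^*(\R^n\times\R^n)$ and read off both defining equations directly.

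First, I would observe that the equation $\v\cdot\v = 1$ merely records that the base point $(\x,\v)$ of a covector in $T^*Q$ lies in $Q = \R^n\times S^{n-1}$, which is automatic from the definition of $Q$.

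Next, to recover the equation $\r\cdot\v = 0$, I would unpack the fiber-wise part of the identification. Under the Euclidean identification $T^*(\R^n\times\R^n) = T(\R^n\times\R^n)$ a covector $(\p,\r)$ corresponds to the vector $\p\,\partial_\x + \r\,\partial_\v$. The paper's identification $T^*Q = TQ$ is likewise via the Euclidean metric, restricted to $Q$, so the inclusion $T^*Q \hookrightarrow T^*(\R^n\times\R^n)$ becomes $TQ \hookrightarrow T(\R^n\times\R^n)$. A point of $T^*Q$ therefore corresponds to a vector $(\p,\r) \in T_{(\x,\v)}Q$. Since $Q$ is the level set $\{\v\cdot\v=1\}$, its tangent space at $(\x,\v)$ is $\{(\dot\x,\dot\v) : \v\cdot\dot\v = 0\}$, and so $\r\cdot\v = 0$. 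Conversely, any $(\p,\r)$ satisfying both equations arises this way, so the two conditions cut out $T^*Q$ exactly.

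The only step that requires a moment's care is the compatibility of the two Euclidean identifications, but it holds tautologically: the metric on $Q$ implementing $T^*Q\cong TQ$ is, by construction, the restriction of the ambient Euclidean metric on $\R^n\times\R^n$. As a quick sanity check that $T^*Q$ is in fact a symplectic submanifold (as the paper asserts), one may verify that the Poisson bracket of the two defining functions $F_1 = \v\cdot\v - 1$ and $F_2 = \r\cdot\v$ equals $2\,\v\cdot\v = 2 \neq 0$ on the constraint set, so the matrix of pairwise brackets is non-degenerate. No step here looks like a genuine obstacle; the content is entirely that the two equations translate the two pieces of data defining $T^*Q$, namely that the base point lies on the sphere factor of $Q$ and that the fiber element annihilates the conormal direction.
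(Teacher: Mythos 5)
Your proof is correct and takes essentially the same approach as the paper's: both use the Euclidean identification $T^*(\R^n\times\R^n)=T(\R^n\times\R^n)$ to convert the covector $(\p,\r)$ into the tangent vector $\p\,\partial_\x+\r\,\partial_\v$ and read off $\r\cdot\v=0$ from tangency to the level set $\{\v\cdot\v=1\}$. The added Poisson-bracket check that $T^*Q$ is a symplectic submanifold is correct and a nice bonus, though not part of the lemma.
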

\begin{proof}Let $(\x,\v)\in Q$, so that $\v\cdot \v=1$, and 
$$\alpha=\p d\x + \r d\v\in T^*_{(\x,\v)}(\R^n\times\R^n). 
$$
 Then $\alpha\in T^*Q$ if and only if the corresponding vector, 
$$X=\p\partial_\x+\r\partial_\v\in T_{(\x,\v)}(\R^n\times\R^n),$$  satisfies $X(\v\cdot \v)=0$. That is, $0=\v d\v (X)= 
\r\cdot\v=0.$
\end{proof}

\begin{lemma} \label{lm:ham}
$P_i=p_i+r_i$ on $T^*Q$. Thus  
\be\label{eq:H}
H={1\over 2}|\p+\r|^2=
{1\over 2}\sum_{i=1}^n(p_i+r_i)^2.
\ee
\end{lemma}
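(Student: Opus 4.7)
The plan is to unwind the definitions. By construction $P_i(\alpha)=\alpha(\xi_i)$ where $\xi_i=\partial_{x_i}+\partial_{v_i}-v_i\bfe$ with $\bfe=\sum_j v_j\partial_{v_j}$. I will extend a given $\alpha\in T^*Q$ to a covector on $T^*(\R^n\times\R^n)$ written in the form $\alpha=\p\,d\x+\r\,d\v$ (the identification used in Lemma \ref{lemma:rv}), and simply evaluate it on $\xi_i$.

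The direct computation gives
\[
P_i(\alpha)=\alpha(\partial_{x_i})+\alpha(\partial_{v_i})-v_i\,\alpha(\bfe)=p_i+r_i-v_i\sum_{j=1}^n v_j r_j=p_i+r_i-v_i\,(\r\cdot\v).
\]
At this point the key input is Lemma \ref{lemma:rv}, which says that the constraint cutting $T^*Q$ out of $T^*(\R^n\times\R^n)$ is exactly $\r\cdot\v=0$ (together with $\v\cdot\v=1$). Thus the last term vanishes and $P_i=p_i+r_i$ on $T^*Q$.

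The Hamiltonian formula then follows immediately by squaring and summing:
\[
H=\tfrac{1}{2}\sum_{i=1}^n P_i^{\,2}=\tfrac{1}{2}\sum_{i=1}^n (p_i+r_i)^2=\tfrac{1}{2}|\p+\r|^2.
\]

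There is no real obstacle here; the only point requiring care is bookkeeping. One might worry that the choice of extension $\alpha\mapsto\p\,d\x+\r\,d\v$ is ambiguous, but the ambiguity is confined to the conormal direction $\v\,d\v$, on which $\xi_i$ vanishes (since $\xi_i$ is tangent to $Q$ by design), so the value $P_i(\alpha)$ is well-defined as a function on $T^*Q$. Once that is noted, the formula $H=\tfrac12|\p+\r|^2$ drops out in one line.
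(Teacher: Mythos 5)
Your proof is correct and follows exactly the paper's argument: evaluate $\alpha=\p\,d\x+\r\,d\v$ on $\xi_i$ and use the constraint $\r\cdot\v=0$ from Lemma \ref{lemma:rv} to kill the extra term. (Minor remark: you get the sign of that extra term right, $-v_i(\r\cdot\v)$; the paper's printed proof has a harmless sign typo, $+v_i(\v\cdot\r)$, which is irrelevant since the term vanishes either way.)
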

\begin{proof} Let $\alpha=\p d\x + \r d\v\in T^*Q$. By equation \eqref{eq:xi},  
$P_i(\alpha):=\alpha(\xi_i)=p_i+r_i + v_i  (\v\cdot \r).$ By the previous lemma, the last term vanishes. 
\end{proof}

\begin{lemma} \label{lm:ham1}
The Hamiltonian equations on $T^*Q$, corresponding to the Hamiltonian \eqref{eq:H}, are 
\be\label{eq:ham}
 \begin{alignedat}{1}
  &\x'=\p+\r,\\
  &\v'=\p+\r-(\v\cdot\p)\v,\\
 &\p'=0,\\ 
 &\r'=(\v\cdot \p)\r - [\r\cdot (\r+\p)] \v,
 \end{alignedat}
\ee
with $\v\cdot \v=1,$ $\r\cdot \v=0$.
 \end{lemma}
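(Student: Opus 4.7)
My plan is to view $T^*Q$ as a symplectic submanifold of the ambient $T^*(\R^n\times\R^n)$ (by Lemma \ref{lemma:rv}) and obtain the Hamiltonian flow on $T^*Q$ by adding Lagrange multipliers to $H$ that enforce the two defining constraints
$$\phi_1:=\v\cdot\v-1=0, \qquad \phi_2:=\r\cdot\v=0.$$
First I would check that these are second-class constraints in the sense that the Poisson bracket matrix $(\{\phi_i,\phi_j\})$ is invertible on $T^*Q$; using the standard brackets $\{v_i,r_j\}=\delta_{ij}$ one gets $\{\phi_1,\phi_2\}=2\v\cdot\v=2$, so the matrix is invertible and $T^*Q$ is indeed symplectic, consistent with Lemma \ref{lemma:rv}. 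This justifies the Lagrange-multiplier procedure.

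Next I would form the modified Hamiltonian
$$\tilde H=H+\lambda_1\phi_1+\lambda_2\phi_2=\tfrac{1}{2}|\p+\r|^2+\lambda_1(\v\cdot\v-1)+\lambda_2(\r\cdot\v)$$
and write the Hamilton equations on $T^*(\R^n\times\R^n)$:
\begin{align*}
\x'&=\partial_\p\tilde H=\p+\r, & \p'&=-\partial_\x\tilde H=0,\\
\v'&=\partial_\r\tilde H=\p+\r+\lambda_2\v, & \r'&=-\partial_\v\tilde H=-2\lambda_1\v-\lambda_2\r.
\end{align*}
The multipliers $\lambda_1,\lambda_2$ are then pinned down by demanding that the flow be tangent to $T^*Q$, i.e.\ that $\phi_1$ and $\phi_2$ be preserved. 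Differentiating $\phi_1$ along the flow gives $2\v\cdot\v'=2\v\cdot\p+2\lambda_2$ (using $\v\cdot\r=0$), forcing $\lambda_2=-\v\cdot\p$. Differentiating $\phi_2$ gives $\r'\cdot\v+\r\cdot\v'=-2\lambda_1+\r\cdot(\p+\r)$ (the $\lambda_2$ terms cancel against each other since $\v\cdot\v=1$), so $\lambda_1=\tfrac{1}{2}\r\cdot(\p+\r)$.

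Substituting these two multipliers back into the four equations produces exactly system \eqref{eq:ham}: the $\x'$ and $\p'$ equations are unchanged, $\v'=\p+\r-(\v\cdot\p)\v$ reproduces the no-skid identity \eqref{eq:noskid}, and $\r'=(\v\cdot\p)\r-[\r\cdot(\r+\p)]\v$. No step here is particularly deep; the only subtle point I foresee is verifying carefully that the two constraint-preservation equations decouple into an algebraic, rather than differential, system for $(\lambda_1,\lambda_2)$, which is what allows the explicit closed-form vector field on $T^*Q$ displayed in \eqref{eq:ham}.
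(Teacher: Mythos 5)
Your proposal is correct and reaches the same four equations, but it is packaged in the language of Dirac's theory of constrained Hamiltonian systems rather than the paper's direct symplectic-restriction argument. The paper writes the defining condition $dH = i_X\omega$ on $T^*Q$, observes that the kernel of the restriction of $\omega$ to $T^*Q$ is spanned by $\v\,d\v$ and $\r\,d\v + \v\,d\r$, and introduces two multipliers $\lambda,\mu$ accordingly; it then pins them down by requiring that the vector field $X$ be tangent to $T^*Q$. You instead form the extended Hamiltonian $\tilde H = H + \lambda_1(\v\cdot\v-1) + \lambda_2(\r\cdot\v)$, write the ambient Hamilton equations, and determine $\lambda_1,\lambda_2$ by constraint preservation. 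The two calculations are literally the same up to normalization: since $d\phi_1 = 2\v\,d\v$ and $d\phi_2 = \r\,d\v + \v\,d\r$, your multipliers are related to the paper's by $\lambda = -2\lambda_1$ and $\mu = -\lambda_2$, and both give $\mu = \v\cdot\p$, $\lambda = -(\p+\r)\cdot\r$. What your presentation buys is a clean a priori justification (the second-class check $\{\phi_1,\phi_2\}=2\ne 0$) that the multipliers can be solved algebraically from the consistency conditions, which the paper instead absorbs into its prior Lemma \ref{lemma:rv} that $T^*Q$ is symplectic. One tiny slip in wording: in the consistency computation for $\phi_2$, the $\lambda_2$-terms drop out because $\r\cdot\v=0$ (or, equivalently, they cancel identically), not because $\v\cdot\v=1$; the $\v\cdot\v=1$ enters only in isolating the $\lambda_1$-term. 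This has no bearing on the result.
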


\begin{proof}
Let $X$ be the Hamiltonian vector field on $T^*Q$,
$$
X=\x'\partial_\x + \v' \partial_\v + \p' \partial_\p + \r' \partial_\r,
$$
where $\x', \v', \p', \r'$ are unknown vectors. We have
\begin{equation} \label{eq:def}
 dH=i_X \omega,
\end{equation}
where  $\omega = d\x\wedge d\p+ d\v\wedge d\r$ is the symplectic form.  

Now  
$$i_X \omega=\x'd\p+\v'd\r-\p'd\x-\r'd\v$$ and,  by equation \eqref{eq:H}, 
$$dH=(\p+\r) (d\p+d\r).$$  Note that  equation \eqref{eq:def}   is an equality between 1-forms on $T^*Q$, the restrictions of both sides of   \eqref{eq:def}  to $T^*Q$. By Lemma \ref{lemma:rv}, the kernel of this restriction is spanned by  $\v d\v, \r d\v+\v d\r$. Thus, equation \eqref{eq:def} amounts to the existence of functions $\lambda, \mu$ on $T^*Q$ such that 
\begin{align*}
\begin{split}
(\p+\r) (d\p+d\r)=&\ \x'd\p+\v'd\r-\p'd\x-\r'd\v\\
&\qquad +\lambda \v d\v+\mu(\r d\v+\v d\r).
\end{split}
\end{align*}
Equating coefficients, we obtain 
\be \label{eq:ham1}
\x'=\p+\r,\ \v'=\p+\r-\mu \v,\ \p'=0,\ \r'=\lambda \v + \mu \r.
\ee
Furthermore, since  $X$ is a vector field on $T^*Q$, $\v d\v, \r d\v+\v d\r$ vanish on $X$, hence 
$$ \v\cdot \v' =0,\  \v'\cdot \r + \v\cdot\r'=0.
$$
Dotting the second equation\ of \eqref{eq:ham1} with $\v$, one obtains  
$$
0= \v\cdot\v'=\v\cdot(\p+\r-\mu \v)=\v\cdot \p
-\mu,
$$
hence $\mu=\v\cdot\p.$ 
Dotting the 4th equation\ of \eqref{eq:ham1} with $\v$ one obtains   $\v\cdot\r'=\v\cdot ( \lambda\v+\mu\r)=\lambda,$ hence 
$$
\lambda=- \v'\cdot \r=-(\p+\r-\mu\v)\cdot \r=-(\p+\r)\cdot \r.
$$
 Substituting these values of $\lambda,\mu$ in equations \eqref{eq:ham1}, we obtain equations (\ref{eq:ham}).
\end{proof}

\begin{lemma}\label{lemma:p} The tri-vector $\p \wedge\v\wedge \x'$ is constant along solutions of equations \eqref{eq:ham}. 
\end{lemma}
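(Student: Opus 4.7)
The plan is a direct computation: differentiate $\p \wedge \v \wedge \x'$ using \eqref{eq:ham} and show the resulting three summands cancel in pairs. Since $\p'=0$ from the third equation, I only need
$$
\p \wedge \v' \wedge \x' + \p \wedge \v \wedge \x'' = 0.
$$

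For the first term, I note that comparing the first two equations of \eqref{eq:ham} gives $\v' = \x' - (\v\cdot\p)\v$. Substituting and using $\p \wedge \x' \wedge \x' = 0$ collapses this to
$$
\p \wedge \v' \wedge \x' = -(\v\cdot\p)\,\p \wedge \v \wedge \x'.
$$

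For the second term, differentiating $\x' = \p + \r$ and using $\p' = 0$ yields $\x'' = \r'$. Substituting the formula $\r' = (\v\cdot\p)\r - [\r\cdot(\r+\p)]\v$, the $\v$-component drops out of the wedge product with $\v$, leaving $(\v\cdot\p)\,\p \wedge \v \wedge \r$. Finally, replacing $\r = \x' - \p$ and killing $\p \wedge \v \wedge \p = 0$ gives
$$
\p \wedge \v \wedge \x'' = (\v\cdot\p)\,\p \wedge \v \wedge \x'.
$$

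The two contributions are manifestly opposite, so their sum vanishes. There is no real obstacle; the only point worth noting is the convenient rewriting $\v' = \x' - (\v\cdot\p)\v$, which lets one use antisymmetry of the wedge rather than expanding everything into components.
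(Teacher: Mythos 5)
Your proof is correct and follows essentially the same route as the paper: a direct differentiation of the tri-vector using the Hamiltonian equations, with the wedge's antisymmetry absorbing the extraneous terms so that the two surviving contributions cancel. The only cosmetic difference is that the paper first rewrites $\p\wedge\v\wedge\x'$ as $\p\wedge\v\wedge\r$ before differentiating, whereas you differentiate $\x'$ directly and pass to $\r$ only in the last step; the cancellation is identical.
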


\begin{proof}Using equations \eqref{eq:ham}, 
\begin{equation*}
\begin{split}
(\p \wedge\v\wedge \x')'&=
(\p \wedge\v\wedge \r)'=\p \wedge\v'\wedge \r+\p \wedge\v\wedge \r'\\
&=-(\v\cdot\p)\p \wedge\v\wedge \r+(\v\cdot\p)\p \wedge\v\wedge  \r=0,
\end{split}
\end{equation*}
as needed.
\end{proof}

\begin{cor} [Part (a) of Theorem \ref{thm:main}] \label{cor:3d} For any normal bicycling geodesic (projection to $Q$ of a solution of equations  \eqref{eq:ham}), the front and back tracks $\x(t), \v(t)$  are contained in the affine space passing through $\x_0$, parallel to the linear subspace spanned by  $\x'_0, \v_0, \p.$ \end{cor}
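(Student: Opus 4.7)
The plan is to upgrade the infinitesimal conservation law of Lemma \ref{lemma:p} into an invariance-of-subspace statement, and then deduce the affine containment. Let $W:=\mathrm{span}(\x'_0,\v_0,\p)\subset\R^n$, a linear subspace of dimension at most $3$. Since $\p'=0$ by the third equation of \eqref{eq:ham}, the vector $\p$ is constant and lies in $W$ for all time. What I want to show is that $\v(t)\in W$ and $\x'(t)\in W$ for all $t$; the affine conclusion then follows immediately, because $\x(t)=\x_0+\int_0^t\x'(s)\,ds\in \x_0+W$, and the back track satisfies $\y(t)=\x(t)-\v(t)\in \x_0+W$.

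To prove the subspace containment I would work with an arbitrary test vector $\w\in W^\perp$ (the Euclidean orthogonal complement of $W$ in $\R^n$) and track the scalar quantities $f(t):=\w\cdot\v(t)$ and $g(t):=\w\cdot\x'(t)$. Since $\w\perp \p,\v_0,\x'_0$, we have $f(0)=g(0)=0$. Using the Hamiltonian system \eqref{eq:ham} together with $\x'=\p+\r$, one finds
\begin{align*}
f' &= \w\cdot\v' = \w\cdot\x'-(\v\cdot\p)\,\w\cdot\v = g-(\v\cdot\p)\,f,\\
g' &= \w\cdot\x'' = \w\cdot\r' = (\v\cdot\p)\,\w\cdot\r-[\r\cdot(\r+\p)]\,\w\cdot\v.
\end{align*}
The key observation that closes the system is $\w\cdot\r=\w\cdot(\x'-\p)=g$, since $\p\in W$ and therefore $\w\cdot\p=0$. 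Thus
$$
g'=(\v\cdot\p)\,g-(\r\cdot\x')\,f,
$$
which together with the equation for $f'$ yields a homogeneous linear ODE system in $(f,g)$ with time-dependent coefficients. By uniqueness of solutions of linear ODEs, the zero initial data $f(0)=g(0)=0$ forces $f\equiv g\equiv 0$.

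Since $\w\in W^\perp$ was arbitrary, this shows $\v(t),\x'(t)\in (W^\perp)^\perp=W$ for all $t$, completing the argument. The main subtlety to watch out for is the closure of the linear ODE system: the only way the right-hand sides live in $\mathrm{span}(f,g)$ with scalar coefficients is because $\p\in W$ makes $\w\cdot\p$ vanish, so that $\w\cdot\r=\w\cdot\x'$; without this one would pick up a genuinely new quantity $\w\cdot\r$ and the propagation argument would fail. I note that this argument also subsumes the degenerate case of Lemma \ref{lemma:p} in which the tri-vector $\p\wedge\v\wedge\x'$ vanishes (so $W$ may have dimension less than $3$); the conclusion of the corollary remains valid, as any such path a fortiori lies in a $3$-dimensional affine subspace containing $\x_0+W$.
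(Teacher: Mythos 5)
Your proof is correct, and it takes a genuinely different route from the paper's. The paper derives the corollary from Lemma \ref{lemma:p}, which shows that the tri-vector $\p\wedge\v\wedge\x'$ is constant along the flow (the calculation there is essentially a repackaging of the same cancellations you exploit); when that tri-vector is nonzero this pins down the $3$-plane spanned by $\p,\v,\x'$, and the affine containment follows since $\p$ is fixed. You instead prove invariance of $W=\mathrm{span}(\x'_0,\v_0,\p)$ directly: for any $\w\in W^\perp$, the pair $(f,g)=(\w\cdot\v,\w\cdot\x')$ satisfies a homogeneous linear system (closed precisely because $\w\cdot\p=0$ forces $\w\cdot\r=g$), so the zero initial data propagate. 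This is more elementary — it avoids exterior algebra entirely — and has the advantage of handling the degenerate case $\dim W<3$ on the same footing, whereas the tri-vector argument is vacuous when $\p\wedge\v_0\wedge\x'_0=0$ and would need a lower-rank variant. The paper's version is more conceptual and identifies a clean conserved quantity, which is used implicitly again in Lemma \ref{lemma:bc}(i) where $b=\p\cdot(\v\times\x')$ (the $n=3$ incarnation of the tri-vector) is observed to be constant; your linear-ODE argument doesn't produce that invariant as a byproduct. Both are valid proofs of the corollary.
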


\begin{rmrk}
One can give also a geometric argument  for Corollary \ref{cor:3d}.  Let $(\x(t),\y(t)),\ t\in[t_0,t_1],$ be a unique minimizing geodesic (we assume that the interval $[t_0,t_1]$ is small enough). Generically, the points  $\x(t_0), \y(t_0), \x(t_1), \y(t_1)$ span a 3-dimensional affine space, and the reflection in this subspace induces an isometry of the configuration space $Q$. If the geodesic is not contained in this 3-space, then its reflection is another minimizing geodesic, contradicting its uniqueness.
\end{rmrk}

Another   consequence of equations  \eqref{eq:ham} is 
\begin{cor}
\begin{enumerate}[{\rm (a)}]
\item  $\x'\cdot \x'=2H$ is constant along solutions of equations \eqref{eq:ham}. 
\item  If $(\x(t), \v(t), \p, \r(t))$ is a solution then so 
is $(\x(\lambda t), \v(\lambda t), \lambda\p, \lambda\r(\lambda t)$ for all $\lambda\neq 0.$
\end{enumerate}
\end{cor}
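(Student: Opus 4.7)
For part (a), the plan is to combine two observations. First, since $H$ is the Hamiltonian generating the flow, it is automatically conserved along its own integral curves; this is the standard fact that $X(H) = \omega(X,X) = 0$. Second, the first equation of \eqref{eq:ham} gives $\x' = \p + \r$, so by the definition \eqref{eq:H},
\[
\tfrac{1}{2}\,\x'\cdot\x' \;=\; \tfrac{1}{2}|\p+\r|^2 \;=\; H.
\]
Combining these two facts yields the claim. As an immediate corollary, solutions can be reparametrized to have $|\x'|$ identically equal to any positive constant, and in particular to unit speed (provided $H\neq 0$).

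For part (b), the plan is a direct verification that the scaled curve satisfies \eqref{eq:ham}, together with the constraints of Lemma \ref{lemma:rv}. Set $\tilde\x(t):=\x(\lambda t)$, $\tilde\v(t):=\v(\lambda t)$, $\tilde\p:=\lambda\p$, $\tilde\r(t):=\lambda\r(\lambda t)$. Using the chain rule, each time derivative acquires a factor of $\lambda$, and one checks that each equation of \eqref{eq:ham} is homogeneous of degree $2$ in $(\p,\r)$ once the $\x'$, $\v'$, $\p'$, $\r'$ on the left-hand side are counted with degree $1$: the first two equations are linear in $(\p,\r)$, and the right-hand side of the fourth equation is quadratic. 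For example,
\[
\tilde\r'(t) \;=\; \lambda^2\r'(\lambda t) \;=\; \lambda^2\bigl[(\v\cdot\p)\r - (\r\cdot(\r+\p))\v\bigr]\bigr|_{\lambda t},
\]
and the right-hand side matches $(\tilde\v\cdot\tilde\p)\tilde\r - (\tilde\r\cdot(\tilde\r+\tilde\p))\tilde\v$ on the nose. The constraints $\tilde\v\cdot\tilde\v=1$ and $\tilde\r\cdot\tilde\v=0$ are inherited from those on $(\v,\r)$.

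There is no serious obstacle here; both parts are essentially bookkeeping. The only point worth noting is the interpretation: part (a) says that the bicycle geodesics can be parametrized by arclength of the front track (matching the `unit speed' hypothesis of Theorem \ref{thm:main}(c)), while part (b) shows that rescaling $\p\mapsto\lambda\p$ corresponds to a time rescaling of the geodesic, so that one momentum parameter can be normalized away when $\p\neq 0$. This will be useful later when parameters $a,b$ of the geodesic are reduced to a two-parameter family, consistent with the parameter count in Theorem \ref{thm:main}(b).
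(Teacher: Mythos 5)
Your proof is correct. The paper states this corollary without proof (it follows immediately from Lemma \ref{lm:ham} and equations \eqref{eq:ham}), and your argument --- conservation of $H$ along its own Hamiltonian flow plus the identity $\x'=\p+\r$ for part~(a), and direct homogeneity bookkeeping for part~(b) --- is exactly the intended verification.
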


We now proceed to proving parts (b), (c) of Theorem \ref{thm:main}. By the last corollaries, we shall assume henceforth  that $n=3$ and $|\x'(t)|=1$ (arc length parametrization of the front track).

\begin{lemma}\label{lemma:bc} For any solution of equations \eqref{eq:ham}:
\begin{enumerate}[{\rm (i)}]
\item $b:=\p\cdot( \v\times \x')$ is constant.

\item $|b|\leq  |\p|,$ with equality if and only if  the front track is a unit circle, with the back wheel staying fixed at the center of the circle. 

\item If a geodesic  has a front track which is a straight line, $\x''=0$, then  $b=0$, $|\p|=1$.  (The converse is not true, because the Euler soliton as the front track also corresponds to these values, see Proposition \ref{prop:planar} below). 

\item  Equations \eqref{eq:ham} are equivalent to 
\be\label{eq:Tv3}
\T'=\T\times\K, \ \v'=\T-(\v\cdot \p) \v,\quad  
\ee
where 
$$
\T=\x',\ \K=\r\times\v=(\T-\p)\times\v,\ \p=const, |\T|=|\v|=1,\  \r\cdot\v=0.
$$
\end{enumerate}
\end{lemma}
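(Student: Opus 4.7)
The plan is to derive each of (i)--(iv) directly from the Hamiltonian system \eqref{eq:ham} together with the constraints $|\v|=1$ and $\r\cdot\v=0$. Throughout I use $\p'=0$, $\x'=\p+\r$, and hence $\x''=\r'$.

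Part (i) is a one-line consequence of Lemma \ref{lemma:p}: in $\R^3$ the tri-vector $\p\wedge\v\wedge\x'$ is, up to the fixed volume form, the scalar $\det(\p,\v,\x')=\p\cdot(\v\times\x')=b$, so its constancy gives the claim. Part (iv) is a purely algebraic rewriting. Since $\p'=0$, $\T'=\r'$, and the vector triple product gives $\T\times(\r\times\v)=\r(\T\cdot\v)-\v(\T\cdot\r)$; using $\r\cdot\v=0$ one identifies $\T\cdot\v=\p\cdot\v$ and $\T\cdot\r=\r\cdot(\r+\p)$, matching the $\r'$-equation of \eqref{eq:ham}. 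The $\v'$-equation is simply restated with $\T$ in place of $\x'$, and the reverse implication is the same calculation read backwards.

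For (ii), Cauchy--Schwarz gives $|b|\le|\p|\,|\v\times\x'|\le|\p|$ because $|\v|=|\x'|=1$. Equality forces $\v\perp\x'$, and combined with $\x'=\p+\r$ and $\r\cdot\v=0$ this yields $\v\cdot\p\equiv 0$. The $\v'$-equation of \eqref{eq:ham} then collapses to $\v'=\x'$, whence $\y'=(\x-\v)'=0$: the back wheel is pinned at some $\y_0$. Differentiating $\p\cdot\v=0$ gives $\p\cdot\x'=0$, so $\r\cdot\x'=(\x'-\p)\cdot\x'=1$, and the $\r'$-equation simplifies to the planar harmonic oscillator $\x''=-(\x-\y_0)$. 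With $|\x'|=1$ this traces a unit circle about $\y_0$. The converse amounts to checking that any unit circle with back wheel at its center, together with any $\p$ perpendicular to the plane of the circle, satisfies \eqref{eq:ham} and gives $|b|=|\p|$.

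The main obstacle is (iii). Setting $\x''=0$ and $|\x'|=1$ forces $\x'\equiv\T_0$ constant with $|\T_0|=1$, so $\r=\T_0-\p$ is constant; substituting $\r'=0$ into the fourth equation of \eqref{eq:ham} yields the identity
\begin{equation*}
(\v(t)\cdot\p)(\T_0-\p) \;=\; (1-\T_0\cdot\p)\,\v(t) \qquad\text{for all }t.
\end{equation*}
A short case analysis handles it. Dotting with $\v$ and with $\T_0$ and combining shows that $\T_0\cdot\p=1$ (otherwise one obtains the contradiction $0=1-\T_0\cdot\p$). The identity then reduces to $(\v\cdot\p)(\T_0-\p)=0$; if $\T_0\neq\p$ then $\v\cdot\p\equiv 0$, and differentiating gives $0=(\v\cdot\p)'=\v'\cdot\p=\T_0\cdot\p=1$, again a contradiction. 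Hence $\p=\T_0$, so $|\p|=1$ and $b=\p\cdot(\v\times\p)=0$, as required.
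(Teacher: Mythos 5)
Your proof is correct and follows essentially the same route as the paper's: (i) reduces to Lemma \ref{lemma:p} by identifying the tri-vector with $\det(\p,\v,\x')$, (iv) is the vector-triple-product rewriting, (ii) is Cauchy--Schwarz plus the resulting simplification of \eqref{eq:ham} to a planar harmonic oscillator, and (iii) analyses the algebraic identity coming from $\r'=0$ and eliminates the $\v\cdot\p\equiv 0$ branch using the $\v'$-equation. The only slight variation is in (iii), where you dot with $\v$ (immediately giving $\T_0\cdot\p=1$ since $(\T_0-\p)\cdot\v=\r\cdot\v=0$) whereas the paper dots with $\r$ to split into the cases $\v\cdot\p=0$ or $\r=0$; both reach $\p=\T_0$ with about the same effort. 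Two small stylistic remarks: in (ii) the step ``equality forces $\v\perp\x'$'' does not follow from Cauchy--Schwarz alone when $\p=0$, though there $\x'=\r\perp\v$ gives it anyway, so the conclusion stands; and in (iii) the parenthetical ``otherwise one obtains the contradiction $0=1-\T_0\cdot\p$'' is misworded --- dotting with $\v$ directly yields $0=1-\T_0\cdot\p$, which \emph{is} the desired conclusion, not a contradiction.
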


\begin{proof} (i) follows  from  Lemma   \ref{lemma:p}. 

\mn   (ii)  By the Cauchy-Schwartz inequality, $|b|=| \p\cdot( \v\times \x')|\leq |\p| | \v| | \x'|  = |\p|,$   with  equality if  either $\p = 0$
 or $\p\ne 0$ and $\x', \v, \p$ are pairwise orthogonal. In the first case $\x' = \r$, hence  is perpendicular to $\v$, and in both cases, setting $\T= \x'$, equations \eqref{eq:ham} give 
$\T'' =- \T ,\ \v=-\T',$
and claim  (ii) follows. 

\mn (iii) The first equation of \eqref{eq:ham} implies that $\r=\x'-\p=const,$  thus, by the  4th equation, $(\v\cdot\p)\r=(\r\cdot \x')\v.$  Dotting with $\r$, we get $(\v\cdot\p)|\r|^2=0,$ so either $\v\cdot\p=0$ or $\r=0$. If  $\v\cdot\p=0$ then  by the 3rd equation, $\v'=\x', $ so $\v=\v_0+t\x'$, which is impossible since $|\v|=|\x'|=1$. Hence $\r=0$, so $\p=\x'$, which implies that  $b=\p\cdot( \v\times \x')=\x'\cdot( \v\times \x')=0$ and  $|\p|=|\x'|=1$, as needed.

\mn (iv)   The 3rd equation  of \eqref{eq:ham} gives $\T'=(\v\cdot \p)\r - (\r\cdot\T)\v.$ By the vector identity 
\be\label{eq:vi}
{\bf a}\times({\bf b}\times{\bf c})=({\bf a}\cdot{\bf c}){\bf b}- ({\bf a}\cdot{\bf b}){\bf c},
\ee
 this is equivalent to  $\T'=\T\times(\r\times\v).$ 
The second equation of \eqref{eq:Tv3} is immediate from the first and second equations of \eqref{eq:ham}.
\end{proof}

\begin{rmrk}
The  front track of a  bicycling geodesic with $\p = 0$  is thus  a unit circle with the back track fixed at its center. From here on, unless otherwise mentioned, we will only consider bicycling geodesics with $\p\neq  0$.
\end{rmrk}

\begin{prop}[Part \ref{thm:main_c} of Theorem \ref{thm:main}] 
A unit speed bicycle path $(\x(t), \v(t))$ in $\R^3$ 
with  initial conditions
$\x_0, \x'_0, \v_0$
 is a bicycling geodesic  (a solution to equations \eqref{eq:Tv3}) with $\p\neq 0$  if and only if  $\x(t)$  is a solution to 
\be\label{eq:mag_geod}
\x''=\x'\times \K,
\ \mbox{ where }\K  =(\x-\x_1)\times\p+ \delta\p, 
\ee
and 
$$ \v_0\cdot(\x'_0 - \p) = 0,\ \x_1 = \y_0 +{(\x'_0\times \v_0)\times\p\over |\p|^2},\ \delta = {(\x'_0\times \v_0)\cdot\p\over |\p|^2}=-{b\over |\p|^2},$$ 
 where $\y_0 = \x_0 - \v_0$ is the initial back track position. See Figure \ref{fig:vectors}.

\end{prop}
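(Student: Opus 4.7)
The plan is to recognize the moving vector $\K(t) := (\x'(t)-\p)\times\v(t)$ of Lemma~\ref{lemma:bc}(iv) as itself a Killing magnetic field of the form $(\x-\x_1)\times\p + \delta\p$ along any solution of \eqref{eq:Tv3}, and then close the converse by ODE uniqueness.

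For the forward direction, I differentiate $\K$ along the flow. The constraint $\r\cdot\v = 0$ of Lemma~\ref{lemma:rv} translates to $\v\cdot\x' = \v\cdot\p$, so the second equation of \eqref{eq:Tv3} reads $\v' = \x' - (\v\cdot\p)\v$. Using $\x'' = \x'\times\K$ and the vector identity \eqref{eq:vi}, the piece $\x''\times\v = (\x'\times\K)\times\v$ collapses to $(\v\cdot\p)\K$ (exploiting $\v\cdot\K = 0$ and $\v\cdot\x' = \v\cdot\p$), while $(\x'-\p)\times\v'$ expands to $\x'\times\p - (\v\cdot\p)\K$. The two $(\v\cdot\p)\K$ terms cancel, leaving the clean identity $\K' = \x'\times\p$. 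Integrating gives $\K(t) = \K_0 + (\x(t)-\x_0)\times\p$, which is already in Killing form. To match it with $(\x-\x_1)\times\p + \delta\p$, I decompose the constant $\K_0 = (\x'_0 - \p)\times\v_0$ along and across $\p$: dotting with $\p$ isolates $\delta|\p|^2 = \K_0\cdot\p = (\x'_0\times\v_0)\cdot\p$ (matching the claimed $\delta$), and the transverse part is handled by applying $({\bf a}\times\p)\times\p = ({\bf a}\cdot\p)\p - |\p|^2{\bf a}$ (a consequence of \eqref{eq:vi}) to ${\bf a} = \x'_0\times\v_0$; this directly verifies that the prescribed $\x_1 = \y_0 + (\x'_0\times\v_0)\times\p/|\p|^2$ satisfies $(\x_0-\x_1)\times\p = \K_0 - \delta\p$.

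The converse is ODE uniqueness. The second-order equation \eqref{eq:mag_geod} determines $\x(t)$ uniquely from $\x_0, \x'_0$, and the no-skid ODE $\v' = \x' - (\x'\cdot\v)\v$ then uniquely determines $\v(t)$ from this $\x$ and $\v_0$. On the other hand, the compatibility $\v_0\cdot(\x'_0-\p) = 0$ makes $(\x_0,\v_0,\p,\x'_0-\p)$ a valid initial point for the Hamiltonian system \eqref{eq:ham}, and by the forward direction its projection to $Q$ satisfies \eqref{eq:mag_geod} with exactly the same $(\x_1,\delta)$. Matching initial data and ODE uniqueness force the two $\x$'s to coincide, and the two $\v$'s then agree as well, so the given $(\x,\v)$ is a bicycle geodesic.

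The parameter identifications come for free: $b = \p\cdot(\v\times\x')$ is constant by Lemma~\ref{lemma:bc}(i), so evaluation at $t=0$ gives $b = -(\x'_0\times\v_0)\cdot\p$, while $a^2+b^2 = |\p|^2$ matches the obvious conserved $|\p|^2$ of the Hamiltonian flow with the energy $a_4^2 = a^2+b^2$ of \eqref{eq:energy} via the dictionary \eqref{eq:ab} from part~(b). I expect the only real obstacle to be the $\K_0$-decomposition, which pins down the specific convention $\x_1 - \y_0 \perp \p$ implicit in the stated formula; once this is checked, the rest is essentially ODE uniqueness.
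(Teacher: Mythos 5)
Your proof is correct and follows essentially the same route as the paper: both derive the conservation law $\K' = \x'\times\p$, integrate it, and decompose the constant of integration along and across $\p$ to match the Killing-field form, with the converse handled by ODE uniqueness. The only cosmetic differences are that you differentiate $\K$ directly from \eqref{eq:Tv3} rather than from \eqref{eq:ham}, and you decompose $\K_0 = (\x_0'-\p)\times\v_0$ directly instead of first rewriting $\K$ in terms of $\y_0$ as in \eqref{eq:rv} and then decomposing $\x_0'\times\v_0$; these are equivalent computations.
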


\begin{figure}
\center
\def\svgwidth{.3\textwidth}\import{figures/}{vectors.pdf_tex}
\caption{}\label{fig:vectors}
\end{figure}

\begin{proof} 
From equations \eqref{eq:Tv3}, 
\be\label{eq:B}
\x''=\x'\times\K,
\ee where $\K:=\r\times\v$ and  $\r=\x'-\p.$ By  equations \eqref{eq:ham},
\begin{align} \label{eq:magDer}
\begin{split}
\K' &=\r'\times\v + \r\times\v' = (\v\cdot\p)\r\times\v +   \r\times[\p-(\v\cdot\p)\v]=
\r\times\p\\
&=(\x\times\p)'.
\end{split}
\end{align}
Consequently, 
$$\K -\x\times\p=\r_0\times\v_0 -\x_0\times\p,$$
or
\be\label{eq:rv}
 \K = (\x - \y_0)\times\p + \x_0' \times\v_0.
 \ee
Since $\p\ne 0$, we may decompose orthogonally
\be
 \x_0' \times\v_0= \p\times {\bf a} + \delta \p,
\ee
for some ${\bf a}\perp\p$ and $\delta\in\R$. Then   
$$ (\x_0'\times\v_0)\times\p= |\p|^2{\bf a}, \ (\x_0' \times\v_0)\cdot\p=|\p|^2\delta,
$$
hence 
\be\label{eq:adel}{\bf a}= {(\x_0'\times\v_0)\times\p\over |\p|^2},\ 
\delta={(\x_0' \times\v_0)\cdot\p\over |\p|^2}=-{b\over |\p|^2}.
\ee 
Equations \eqref{eq:mag_geod} now follow from \eqref{eq:B}-\eqref{eq:adel}  by taking $\x_1 := \y_0 + {\bf a}$.

Conversely, the magnetic field $\K$ and initial conditions for its trajectory are defined using only the initial conditions for the bicycling geodesic's front track. Since they satisfy the same equations of motion (equation \eqref{eq:Tv3}), such magnetic field trajectories coincide with the bicycling geodesics.
\end{proof}

Let $\T,\N,\B$ be the Frenet-Serret frame along a non-linear front track $\x(t)$, and $\kappa,\tau$ the curvature and torsion functions, respectively. That is, 
\be\label{eq:FS}
 \x'=\T,\ \T'=\kappa\N,\ \N'=-\kappa\T+\tau\B,\ \B'=-\tau\N,
\ee
where $\B=\T\times\N$ (the Frenet-Serret equations).

\begin{rmrk}
\label{rmrk:FS}
The Frenet-Serret frame is  usually defined via   formulas \eqref{eq:FS} along a regular curve $\x(t)$ in $\R^3$, parametrized by arc length, with {\em non-vanishing acceleration} $\x''$,   by adding  the condition  $\kappa>0$.  If one does not add the last condition, then the frame is well defined only up to the involution 
$$(\T, \N,\B,\kappa,\tau)\mapsto (\T, -\N,-\B,-\kappa,\tau).$$   
For analytic curves, as is our case (the right hand side of equations \eqref{eq:ham} are quadratic polynomials), $\x''$ either vanishes identically, in which case it is a line, or vanishes at isolated points, 
the {\em inflection points} of the curve. 

 In the latter case, by looking at the Taylor series of $\x(t)$ around an inflection point, say $\x(0)$, one  sees that that the Frenet-Serret frame extends analytically to these points,
so equations \eqref{eq:FS} still hold, 
but $\kappa$ may change sign at the inflection point. For example, if $\x'''(0)\neq 0$, then $\kappa'(0)\neq 0$. 

That is, any analytic non-linear  regular curve $\x(t)$ admits exactly {\em two} Frenet-Serret frames,  $(\T,\N,\B,\kappa,\tau)$ 
and $(\T,-\N,-\B,-\kappa,\tau)$, both satisfying the Frenet-Serret equations \eqref{eq:FS}, but if the $\kappa$ has variable sign there   is no natural way to choose one of the frames.  
This situation actually occurs for some of the solutions of equations \eqref{eq:ham}, 
as we shall see later (the constant torsion solutions, see Proposition  \ref{prop:planar} below). 

In summary, in what follows, whenever we mention  ``the  Frenet-Serret frame'', we implicitly  refer to either choice of  these frames in case $\kappa$ has a variable sign. One can check  that all equations involving the frame are invariant under the involution $(\T, \N,\B,\kappa,\tau)\mapsto (\T, -\N,-\B,-\kappa,\tau)$. 
For more details on inflection points of analytic space curves see \cite{N}, or pages 41-43 of \cite{G}.
\end{rmrk}

\begin{lemma}For any non-linear geodesic front track,
\be\label{eq:pconst} \p={1+a^2-\kappa^2\over 2}\T-\kappa'\N-\kappa(\tau-b)\B=const., 
\ee
where  $a^2:=|\p|^2-b^2$  (see  Lemma \ref{lemma:bc}).   

\end{lemma}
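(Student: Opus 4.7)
The plan is to use the fact that $\p$ is already conserved (the third equation of \eqref{eq:ham}) and simply identify its three scalar components in the Frenet--Serret frame. Writing $\p = u\T + v\N + w\B$, differentiating this decomposition and using $\p' = 0$ together with \eqref{eq:FS} produces the ODE system $u' = \kappa v$, $v' = \tau w - \kappa u$, $w' = -\tau v$. The task is then to recognize $u, v, w$ as the three claimed coefficients.

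For the tangential component $u = \p \cdot \T$: the first equation of \eqref{eq:ham} gives $\T = \p + \r$, hence $|\r|^2 = 1 + |\p|^2 - 2u$. From Lemma \ref{lemma:bc}(iv), $\kappa^2 = |\T \times \K|^2 = |\K|^2 - (\T\cdot\K)^2$, where $|\K|^2 = |\r|^2$ (using $\r\cdot\v = 0$ and $|\v|=1$) and $\T\cdot\K = \T\cdot((\T-\p)\times\v) = -\p\cdot(\v\times\T) = -b$. Thus $\kappa^2 = |\r|^2 - b^2 = 1 + a^2 - 2u$, giving $u = (1+a^2-\kappa^2)/2$. The normal component $v$ then follows immediately from $u' = \kappa v$ after differentiating this formula: $v = -\kappa'$ at points where $\kappa\ne 0$, extended to all $t$ by analyticity (Remark \ref{rmrk:FS}). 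For the binormal component I compute $\kappa \B = \T\times \kappa\N = \T\times(\T\times\K) = (\T\cdot\K)\T - \K = -b\T - \K$ and dot with $\p$, using $\p\cdot\K = \p\cdot((\T-\p)\times\v) = -b$, to obtain $w = b(1-u)/\kappa = b(\kappa^2 + 1 - a^2)/(2\kappa)$.

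The main obstacle is that this direct calculation expresses $w$ in terms of $\kappa$ alone, whereas the target expression $-\kappa(\tau - b)$ involves the torsion. I expect to bridge the two via the third equation of the Frenet--Serret-derived system, $w' = -\tau v = \tau\kappa'$: differentiating the explicit formula for $w$ and matching with $\tau\kappa'$ yields, at generic points (those with $\kappa'\ne 0$), the identity $\kappa^2(2\tau - b) = b(a^2-1)$; by analyticity this extends to all $t$. A short algebraic rearrangement then shows that $b(1-u)/\kappa$ is equal to $-\kappa(\tau - b)$, completing the identification. As a bonus, this calculation produces the second equation of Theorem \ref{thm:main}(b) for free, and the first equation will drop out of the $\N$-component relation $v' = \tau w - \kappa u$ once the formula for $\p$ is in hand.
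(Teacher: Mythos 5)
Your approach is genuinely different from the paper's. The paper computes $\T''$ in two ways --- once via the Lorentz-force equation $\T' = \T\times\K$ as $\T'' = (\T\cdot\p - |\K|^2)\T - b\K - \p$, and once from the Frenet--Serret equations as $-\kappa^2\T + \kappa'\N + \kappa\tau\B$ --- then reads off $\p\cdot\N$ and $\p\cdot\B$ by comparing components, with no division anywhere. You instead decompose $\p = u\T + v\N + w\B$ from the start, use $\p' = 0$ to get the transport system $u' = \kappa v$, $v' = \tau w - \kappa u$, $w' = -\tau v$, and determine $u$, $v$, $w$ one at a time. Your computations of $u = (1+a^2-\kappa^2)/2$, of $\T\cdot\K = -b$, $|\K|^2 = |\r|^2$, and $\p\cdot\K = -b$ are all correct, and your route does produce equation \eqref{eq:geods2} as a useful side effect, which the paper derives afterward by substituting the formula for $\p$ back into $\p\cdot\K = -b$.

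There is, however, a genuine gap in your treatment of the binormal component. You derive $w = b(1-u)/\kappa$ geometrically and then argue that it equals $-\kappa(\tau - b)$ by differentiating, using $w' = \tau\kappa'$, and dividing by $\kappa'$. That step only runs at points where $\kappa' \ne 0$. For the unit circle ($a = 0$, $\kappa \equiv 1$, $\tau \equiv 0$), which is a non-linear geodesic front track covered by the lemma, $\kappa'$ vanishes identically, so there is nowhere to start the analyticity argument and your derivation produces no relation at all. (The identity does hold there --- $w = b$ and $-\kappa(\tau - b) = b$ --- but that check is absent and must be supplied separately.) More minor: your derivations of $v = -\kappa'$ and of $w = b(1-u)/\kappa$ both involve division by $\kappa$, which vanishes at inflection points of $a=1$ tracks; this is repaired by clearing denominators and invoking analyticity, but should be said. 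The paper's double computation of $\T''$ yields $\p\cdot\B = -\kappa(\tau-b)$ directly with no denominators, sidestepping all of these degenerate cases, which is why it is the cleaner route.
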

\begin{proof} 
We   compute each of the coefficients $\p\cdot\T, \p\cdot\N, \p\cdot\B$.
 Recall from \eqref{eq:Tv3} and \eqref{eq:magDer} that for $\K := \r\times\v$ we have
\be\label{eq:TK}
\T' =\T\times\K ,\quad \K' =\T\times\p.
\ee

Now, since $\r = \T - \p$ and $b = \p\cdot (\v\times\T)$, we have $\T\cdot \K = \T\cdot (\r\times \v) = -b$. Using again the vector identity \eqref{eq:vi}, 
\be \label{eq:rel}
\kappa \B = \T\times \T' = \T\times (\T\times\K) = -b\T - \K.
\ee
It follows that $|\K|^2 = b^2 + \kappa^2$, and since $\r\cdot \v  = 0$, $|\v| = 1$, we have:
\[ b^2 + \kappa^2 = |\K|^2 = |\r|^2 = |\T - \p|^2 = 1 - 2\T\cdot \p + |\p|^2,\]
yielding the expression for the first component, $\T\cdot \p$.

For the remaining components, using equations \eqref{eq:TK} and   \eqref{eq:vi}, one has
\begin{align*}
\T'' &= (\T\times \K)' = (\T\times\K)\times \K + \T\times (\T\times \p)\\
&= \left( \T\cdot\p  - |\K|^2\right)\T - b\K - \p.
\end{align*}
On the other hand, by the Frenet-Serret equations, 
\[  \T'' = - \kappa^2 \T + \kappa' \N + \kappa\tau \B.\]

Dotting these two expressions for $\T''$ with $\N$, we obtain $\kappa' = -b\K\cdot \N - \p\cdot \N$, while from \eqref{eq:rel} we have $  \K\cdot \N = 0$, so that $\p\cdot \N = - \kappa'$, as needed.

Dotting the  two expressions for $\T''$ with $\B$ we obtain 
$\kappa\tau = -b\K\cdot \B  -   \p\cdot \B$, while from \eqref{eq:rel} we have $ \K\cdot \B = -\kappa$, so that $\p\cdot  \B  = - \kappa(\tau - b)$.
\end{proof}

Now we prove the second statement of Theorem \ref{thm:main}.

\begin{prop} Non-linear geodesic front tracks   in $\R^3$ (solutions to equations \eqref{eq:ham}) are curves whose curvature and torsion functions satisfy 
\begin{align}
& \kappa'' = \kappa\left(  \tau(\tau -b) + \frac{1 + a^2 - \kappa^2}{2}\right),\label{eq:geods1}\\
&\kappa^2 (2 \tau-b)=b(a^2-1),\label{eq:geods2}
\end{align}
such that   
\be\label{eq:geods3}
(\kappa ')^2+\frac{1}{4} \left( 1 + a^2 - \kappa^2 \right)^2+\kappa^2 (\tau-b)^2=a^2+b^2, 
\ee
where $a,b\in\R$.
\end{prop}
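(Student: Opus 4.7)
The plan is to read all three equations off the single identity $\p = \frac{1+a^2-\kappa^2}{2}\T - \kappa'\N - \kappa(\tau-b)\B$ established in the previous lemma: each of \eqref{eq:geods1}, \eqref{eq:geods2}, \eqref{eq:geods3} will come from one scalar invariant of the constant vector $\p$, using Frenet-Serret in the obvious way.

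For \eqref{eq:geods3}, I would simply compute $|\p|^2$. Since $\{\T,\N,\B\}$ is orthonormal, $|\p|^2$ equals the sum of the squares of the three coefficients displayed above, and by the definition of $a$ we have $|\p|^2 = a^2 + b^2$. No differentiation is needed, and equation \eqref{eq:geods3} is immediate.

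For \eqref{eq:geods2}, the idea is to evaluate $\p\cdot\K$ two ways, where $\K=\r\times\v$ is the Killing magnetic field. On the one hand, because $\K=(\T-\p)\times\v$, one has $\p\cdot\K=\p\cdot(\T\times\v)=-\p\cdot(\v\times\T)=-b$, using the definition of $b$ in Lemma \ref{lemma:bc}. On the other hand, equation \eqref{eq:rel} gives $\K=-b\T-\kappa\B$, so dotting with the Frenet-Serret expression for $\p$ yields $\p\cdot\K=-b\cdot\tfrac{1+a^2-\kappa^2}{2}+\kappa^2(\tau-b)$. Equating the two and simplifying algebraically produces $\kappa^2(2\tau-b)=b(a^2-1)$.

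Finally, for \eqref{eq:geods1}, I would differentiate $\p=$const along the curve and apply the Frenet-Serret equations \eqref{eq:FS}. Writing $\p=\alpha\T+\beta\N+\gamma\B$ with $\alpha,\beta,\gamma$ as in \eqref{eq:pconst}, the condition $\p'=0$ decomposes into three scalar equations: the $\T$-component reduces to $\alpha'-\beta\kappa=0$, which is automatic from $\alpha=\tfrac{1+a^2-\kappa^2}{2}$ and $\beta=-\kappa'$; the $\N$-component $\alpha\kappa+\beta'-\gamma\tau=0$ rearranges precisely to \eqref{eq:geods1}; the $\B$-component $\beta\tau+\gamma'=0$ yields $\kappa\tau'=-\kappa'(2\tau-b)$, i.e.\ the derivative of \eqref{eq:geods2}, serving as a consistency check with the previous paragraph. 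The only real subtlety is the book-keeping at inflection points where the Frenet-Serret frame is defined only up to the sign involution of Remark \ref{rmrk:FS}, but all three equations are invariant under that involution, so they pass to the ambiguous points without issue.
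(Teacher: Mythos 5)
Your proposal is correct and takes essentially the same approach as the paper: \eqref{eq:geods3} from $|\p|^2$, \eqref{eq:geods2} by dotting equation \eqref{eq:rel} with $\p$ and identifying $\p\cdot\K=-b$, and \eqref{eq:geods1} from the $\N$-component of $\p'=0$, with the $\B$-component reproducing the derivative of \eqref{eq:geods2}.
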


\begin{proof}Equation \eqref{eq:geods3} is obtained by taking the norm square of both sides of \eqref{eq:pconst}. Equation \eqref{eq:geods2} is obtained by dotting \eqref{eq:rel} with $\p$,
\[ b = -\K\cdot\p  =   b \T\cdot \p   +\kappa  \B\cdot  \p, \]
then   substituting  the values of $\T\cdot \p ,\    \B\cdot \p $ from equation \eqref{eq:pconst}. Equation  \eqref{eq:geods1} follows by differentiating \eqref{eq:pconst}:
\[ 0 = \left( -\kappa'' + \kappa\left[\tau(\tau- b) + \frac{1 + a^2 - \kappa^2}{2}\right]\right) \N - \left(\kappa'\tau + \left[\kappa(\tau - b)\right]'\right)\B.\]
The vanishing of the $\N$ component gives equation \eqref{eq:geods1}. 

Note that  the vanishing of the $\B$ component in the last equation does not give new information:  multiplying the $\B$ component  by $-2\kappa$, one obtains  the derivative of $ \kappa^2(2\tau - b)$,  which vanishes by  equation \eqref{eq:geods2}.
\end{proof}

\subsection{Bicycle geodesics are normal}\label{ss:ab}
%

Here we show that all bicycle geodesics in $Q=\R^n\times S^{n-1}$ are normal, that is,   the projections to $Q$ of solutions to equations \eqref{eq:ham}. Our main reference here is section 5.3 of \cite{M}.

For a general \sR\ structure $(Q,\D,g)$, abnormal geodesics are  {\em singular} curves of  $(Q,\D)$ (the  definition of singular curves does not involve  the \sR metric $g$, see below).  The converse is not true:  a singular geodesic may happen to be normal \cite[\S5.3.3]{M}. Thus, in order to show that all bicycle geodesics are normal, we will first find the  singular curves of $(Q,\D)$, then show that all geodesics among them are normal, i.e., can be lifted to parametrized curves in $T^*Q$ satisfying  equations \eqref{eq:ham}. 

Singular curves of  $(Q,\D)$ are defined by considering first the  annihilator $\D^0\subset T^*Q$, that is,  the set of covectors vanishing on $\D$. 
A {\em characteristic curve} of   $\D^0$ is a curve in $\D^0$ which does not intersect the zero section of $\D^0\to Q$ and whose tangent is in the kernel of  the restriction of the canonical symplectic form of $T^*Q$ to $\D^0$. 
A {\em singular curve} of $(Q,\D)$ is the projection to $Q$ of a characteristic curve of $\D^0$. 

The case  $n=2$ is special, since in this case   $\D$, defined by the non-skid condition \eqref{eq:noskid}, is contact, which implies that $\D^0\subset T^*Q$ is symplectic (see \cite[\S4.1]{2D} and  the example at the top of page 59 of \cite{M}). Hence there are no characteristics and  singular curves for $n=2$ so all geodesics are automatically normal. We thus assume henceforth that $n>2$.

\begin{prop}\label{prop:ab}
Singular bicycle paths   consist of curves $(\x(t),\v(t))$ in $Q$  in which the back wheel $\x(t)-\v(t)\in\R^n$ is  fixed and $\v(t)$ moves in $S^{n-1}$ perpendicular to some fixed $\p\neq0$, $\p\cdot\v(t)=0.$
\end{prop}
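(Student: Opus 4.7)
The plan is to follow the definition of characteristic curves directly: identify the annihilator $\D^0 \subset T^*Q$, restrict the ambient symplectic form to it, compute its null distribution, and project the resulting characteristics to $Q$. All the work is a short symplectic linear-algebra calculation on $\D^0$.

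To identify $\D^0$, I would use Lemma \ref{lm:ham}, which gives the momentum functions of the orthonormal frame $\xi_i$ as $P_i = p_i + r_i$ on $T^*Q$; hence the annihilator condition reads $\p + \r = 0$. Combined with the defining equations $|\v| = 1$ and $\r\cdot\v = 0$ of $T^*Q$ from Lemma \ref{lemma:rv}, this gives
$$\D^0 = \{(\x,\v,\p,\r) : |\v| = 1,\ \r = -\p,\ \p\cdot\v = 0\},$$
so one may use $(\x,\v,\p)$ as constrained coordinates on $\D^0$. Since $\r$ pulls back to $-\p$, the ambient form $\omega = d\x\wedge d\p + d\v\wedge d\r$ restricts to
$$\omega|_{\D^0} = (d\x - d\v)\wedge d\p.$$

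Next I would compute the kernel. For a vector $X = \dot\x\,\partial_\x + \dot\v\,\partial_\v + \dot\p\,\partial_\p$ tangent to $\D^0$ (equivalently, $\v\cdot\dot\v = 0$ and $\v\cdot\dot\p + \p\cdot\dot\v = 0$),
$$i_X\omega|_{\D^0} = (\dot\x-\dot\v)\cdot d\p - \dot\p\cdot(d\x-d\v).$$
Requiring this 1-form to vanish on $T\D^0$: since $\delta\x \in \R^n$ is unconstrained, the $d\x$-coefficient yields $\dot\p = 0$; the remaining requirement is $(\dot\x - \dot\v)\cdot\delta\p = 0$ for every admissible $\delta\p$. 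Using the zero-section condition $\p \ne 0$ together with $\p\perp\v$, the two linear equations $\v\cdot\delta\v = 0,\ \p\cdot\delta\v = -\v\cdot\delta\p$ on $\delta\v$ are independent (for $n\ge 3$) and hence solvable for any $\delta\p \in \R^n$; therefore every $\delta\p$ is admissible and $\dot\x = \dot\v$. Projecting to $Q$ then yields the proposition: $\p$ is a fixed nonzero vector, the back wheel $\x - \v$ stays put, and $\v(t)$ evolves on $S^{n-1}\cap\p^\perp$.

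The one delicate point I anticipate is the last step of the kernel calculation: without invoking $\p \ne 0$, the coupled $T\D^0$ constraint only forces $\dot\x - \dot\v \in \R\v$, which would still allow the back wheel to slide along the frame direction, in accordance with no-skid but not with the singular-curve condition. Pinning the back wheel in place relies essentially on the nondegeneracy $\p \ne 0$ provided by the characteristic-curve definition.
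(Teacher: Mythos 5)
Your proof is correct and follows essentially the same route as the paper: identify $\D^0$ via the momentum functions, restrict $\omega$ to it, and compute the kernel. A small bonus of your version is that you explicitly argue that every $\delta\p\in\R^n$ is admissible (by solving the two constraints on $\delta\v$ using $\p\perp\v$, $\p\neq 0$), which makes the step from $d\p\wedge(d\x-d\v)\not\equiv 0$ to the full linear independence of the components of $d\p$ and $d\x-d\v$ on $T\D^0$ completely transparent, whereas the paper states that implication a bit tersely.
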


\begin{proof} We first determine  $\D^0\subset T^*Q$. Recall from Lemma \ref{lemma:rv} that $T^*Q$ is given in the canonical coordinates $\x,\v, \p,\r$ on $T^*(\R^n\times\R^n)$ by $\v\cdot\v=1,\ \r\cdot \v=0$. 

\begin{lemma}\label{lm:an}
$\D^0$ is a $(3n-2)$-dimensional  submanifold of $T^*Q$, given by  $\v\cdot\v=1,\ \r\cdot \v=0,\ \r+\p=0$. 
\end{lemma}
\begin{proof}
Let 
$$\alpha=\p d\x +\r d\v\in T^*Q, \ X=\x'\partial_\x+\v'\partial_\v\in \D.$$
 By  Lemma \ref{lemma:distr}, $\v'=\x'-(\x'\cdot\v)\v$. Thus, using that $\r\cdot \v=0$ on $T^*Q$,  $\alpha\in \D^0$  if 
and only if 
$$0=\alpha(X)=\p\cdot\x'+\r\cdot\left[\x'-(\x'\cdot\v)\v)\right]=(\p+\r)\cdot\x'$$
 for all $\x'\in\R^n$. That is, $\p+\r=0$, as claimed.

  To prove that $\dim(\D^0)=3n-2$,  using the implicit function theorem, we show  that $(1,0,{\bf 0})\in \R\times\R\times\R^n$ is a regular value of 
  $$(\x,\v,\p, \r)\mapsto (|\v|^2, \r\cdot\v, \r+\p).$$
   That is, for a fixed
   $(\x,\v,\p, \r)\in\D^0$, $a,b\in\R$, ${\bf c}\in\R^n,$  one needs to solve $$\v\cdot \v'=a,\ \r\cdot\v'+\r'\cdot\v=b,\ \r'+\p'={\bf c}
   $$ 
   for $\v', \p', \r'\in\R^n.$ These equations are solved by $\v'=a\v, \r'=b\v, \p'={\bf c}-b\v.$ 
\end{proof}

\begin{lemma} \label{lm:rest} 
$$\omega:=d\x\wedge d\p +d\v\wedge d\r\equiv (d\x-d\v)\wedge d\p\not\equiv 0
  \mod \D^0,$$
where equivalence of forms mod $\D^0$  means the equality of their restriction to $\D^0$. 
\end{lemma}
\begin{proof}The first congruence follows from the equality $\r+\p=0$ on $\D^0$, proved in the  last lemma. For $n>2$ we have, by the same lemma,  
$$\dim(\D^0)=3n-2>2n={1\over 2}\dim\left[T^*(\R^n\times\R^n)\right],
$$ so the restriction to $\D^0$ of $\omega$ (the canonical symplectic form of $T^*(\R^n\times\R^n)$)  is non-vanishing.  
\end{proof}

Next let 
$$Y=\x'\partial_\x+\v'\partial_\v+\p'\partial_\p+\r'\partial_\r
$$
 be a vector tangent to $\D^0$. 
\begin{lemma}$Y$ is in the kernel of the restriction of $\omega$ to $\D^0$, $i_Y\omega\equiv 0\mod\D^0$,  if and only if $\p'=\r'=\x'-\v'=0.$ 
\end{lemma}
\begin{proof} By Lemma  \ref{lm:an}, 
$$\D^0=\{\v\cdot\v=1, \r\cdot\v=0, \p+\r=0\},
$$
 hence 
tangency of $Y$ to $\D^0$ amounts to
$$\v\cdot \v'=\r\cdot \v'+\v \cdot\r'=0,\ \p'+\r'=0, 
$$ 
thus
$$Y=\x'\partial_\x+\v'\partial_\v+\p'(\partial_\p-\partial_\r).
$$
It follows that 
\begin{align*}
i_Y\omega&\equiv i_Y\left[ (d\x-d\v)\wedge d\p\right]=  (\x'-\v')d\p- \p'(d\x-d\v) \mod\D^0.
\end{align*}
By Lemma \ref{lm:rest}, $d\p\wedge(d\x-d\v)\not\equiv0\mod\D^0.$ It follows that the restrictions of $d\p, d\x-d\v$ to $\D^0$ are linearly independent, hence the vanishing of $i_Y\omega\mod\D^0$ is equivalent to the vanishing of $\x'-\v', \p'.$  Since $\r'=-\p'$ for vectors tangent to $\D^0$, $\r'$ vanishes as well. 
\end{proof}

We can now complete  the proof of Proposition \ref{prop:ab}. From the last lemma follows that characteristics of $\D^0$   are  curves $(\x(t), \v(t), \p(t), \r(t))$ where the back track $\x(t)-\v(t)$ is fixed, $|\v(t)|=1$, $\r(t)\cdot\v(t)=0$  and $\p(t)=-\r(t)$ is a non-zero constant vector. This projects to the stated  curve in $Q$. 
\end{proof}

\begin{rmrk} Proposition \ref{prop:ab} is a special case of the following. Let $M$ be an $n$-dimensional manifold and $Q=S(TM)$ the spherized tangent bundle, consisting of pairs $(y,\ell)$, where $y\in M$ and $\ell$ is an oriented 1-dimensional subspace of $T_yM$. Define a   rank $n$ distribution $\D\subset TQ$ whose integral curves are given by  trajectories $(y(t), \ell(t))$ such that $\dot y(t)\in \ell(t)$ for all $t$.  (One can think of $Q$ as the configuration space for bicycling on $M$, where $y$ is the back wheel placement, $\ell$ is the frame direction and $\D$ is the no-skid condition.) Note that the trajectories for which the back wheel is fixed, $y(t)=const.$, satisfy this condition trivially. Singular curves of $(Q,\D)$ are then given by trajectories $(y(t), \ell(t))$ such that  $y(t)=y$ is fixed and $\ell(t)$ varies in some fixed codimension 1 subspace of $T_yM$. Proposition \ref{prop:ab} is the case of $M=\R^n$ and actually implies the more general case since it is a local statement. 
\end{rmrk}
\begin{cor}\label{cor:ab}All bicycle geodesics are normal. 
\end{cor}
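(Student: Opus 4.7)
The strategy is straightforward given Proposition \ref{prop:ab}: any abnormal geodesic must lie among the singular curves classified there, so I would show that every singular curve which happens to be a geodesic can also be realized as the projection of a solution of the normal Hamiltonian equations \eqref{eq:ham}, and is therefore normal after all.

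By Proposition \ref{prop:ab}, any candidate singular geodesic has the form $\gamma(t) = (\y + \v(t), \v(t))$, with $\y\in\R^n$ a fixed back-wheel position and $\v(t)$ varying on the unit $(n-2)$-sphere $\Sigma := S^{n-1}\cap \p^\perp$ for some fixed $\p\neq 0$. The first substantive step is to determine which such curves are actually geodesics. The key observation is that the family of bicycle paths with back wheel fixed at $\y$ and frame contained in $\p^\perp$ is a subfamily of all bicycle paths; a \sR length minimizer is \emph{a fortiori} a minimizer within this subfamily. On this subfamily the \sR length (which equals the front-track length) reduces to the intrinsic length of $\v$ on $\Sigma$, since $\x'=\v'$. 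Hence $\v(t)$ must be a geodesic arc of $\Sigma$ in its round metric, i.e., a great-circle arc; reparametrizing by front-track arc length gives $\v''=-\v$.

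The last step is an explicit normal lift: set $\p(t):=\p$ (constant) and $\r(t):=\v'(t)-\p$, and check that $(\x(t),\v(t),\p,\r(t))$ satisfies the constraints of Lemma \ref{lemma:rv} and all four Hamiltonian equations \eqref{eq:ham}. The constraints $\v\cdot\v=1$, $\r\cdot\v=0$ and the first three equations in \eqref{eq:ham} follow at once from $\v\cdot\p=\v'\cdot\p=0$, and the fourth reduces to $\v''=-|\v'|^2\,\v$, which is just $\v''=-\v$ under arc-length parametrization. Thus $\gamma$ is normal, completing the proof. The main obstacle in the whole argument is the middle step: recognizing that minimality of $\gamma$ within the ambient family of bicycle paths is already strong enough to force $\v$ to trace a great circle on $\Sigma$, by restricting to the back-wheel-fixed subfamily. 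Once this is in hand the lift is essentially forced by the form of \eqref{eq:ham}.
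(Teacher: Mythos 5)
Your argument is correct and follows essentially the same route as the paper: use Proposition \ref{prop:ab} to pin down the candidate singular curves, observe that criticality of length forces $\v$ to trace a great circle, and then exhibit the explicit normal lift $\r(t)=\v'(t)-\p$ with $\p$ constant. The only difference is presentational — you make explicit the restriction to the back-wheel-fixed subfamily when deducing that $\v$ is a great circle, a step the paper leaves implicit — but this is the same argument, not a different one.
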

\begin{proof}By Proposition \ref{prop:ab}, the length of a singular bicycle path  $(\x(t), \v(t))$ is given by the length of the curve traced by  $\v(t)$  on $S^{n-1}$. This length is critical  when $\v(t)$ traces a spherical geodesic, i.e., an arc of a great circle  on $S^{n-1}$  (the intersection of a 2-dimensional subspace of $\R^n$ with $S^{n-1}$). To show that this singular bicycle geodesic is normal we first reparametrize it by arc length,  $|\v'(t)|=1,$ then  lift it  to a solution of equations \eqref{eq:ham}. Let 
$\p\neq 0$ be a vector perpendicular to the 2-plane spanned by $\v(t)$
and $\r(t):=\v'(t)-\p$.  Then one can easily check that this defines a solution to equations  \eqref{eq:ham}.  (Note that this lift is {\em not} a  characteristic of $\D^0$, since for a characteristic $\r=-\p$ is constant.)
\end{proof}

\section{Additional results} \label{sect:add}

\subsection{More about geodesic front tracks}

In   Theorem \ref{thm:main}  we described front tracks of  bicycling geodesics  as a  subfamily of  \Kr, parametrized by the two parameters $a,b$  in equations \eqref{eq:geods1}-\eqref{eq:geods3}. Here  we give more information on these curves.

Clearly, since $a$ appears in equations \eqref{eq:geods1}-\eqref{eq:geods3} only through $a^2$, it is enough to restrict to $a\geq 0$. 
Regarding   $b$, we observe  the following. 

\begin{lemma}\label{prop:ref}
Reflection with respect to a plane or a point  in $\R^3$ transforms a bicycling geodesic to another bicycling geodesic, with $(\kappa, \tau)\mapsto(\kappa, -\tau)$ and  $(a,b)\mapsto (a,-b)$ in equations \eqref{eq:geods1}-\eqref{eq:geods3}. 
\end{lemma}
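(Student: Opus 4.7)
The plan is to argue that isometries of $\R^3$ naturally lift to isometries of the sub-Riemannian structure $(Q,\D,g)$ defined in Section 2.1, hence carry bicycle geodesics to bicycle geodesics, and then to track the effect of an orientation-reversing isometry on $(\kappa,\tau)$ and on the parameters $(a,b)$.

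First I would check that any Euclidean isometry $R$ of $\R^3$ lifts to a diffeomorphism $\tilde R:Q\to Q$, $(\x,\v)\mapsto (R\x, R_0\v)$, where $R_0$ is the linear part of $R$. Since the no-skid relation $\v'=\x'-(\x'\cdot\v)\v$ of Lemma \ref{lemma:distr} is built from the Euclidean inner product, it is manifestly preserved by $\tilde R$, so $\tilde R_*\D=\D$. Moreover, $|R_0\x'|=|\x'|$, so $\tilde R$ preserves the \sR metric $g$ given by the front-wheel projection; consequently it preserves the length functional and hence maps bicycle geodesics to bicycle geodesics. Both plane reflections and point reflections have $\det R_0=-1$, so fall within this setup.

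Next I would compute the transformation of the Frenet--Serret data under an orientation-reversing $R$. The new front track is $R\x$, arc-length parametrized; its Frenet--Serret frame is $(\T^*,\N^*,\B^*)$ with $\T^*=R_0\T$ and $\N^*=R_0\N$ (so that $\kappa^*=\kappa$), while
\[
\B^*=\T^*\times\N^*=(R_0\T)\times(R_0\N)=(\det R_0)\,R_0(\T\times\N)=-R_0\B .
\]
Differentiating $\B^*=-R_0\B$ and comparing with the Frenet equation $(\B^*)'=-\tau^*\N^*$ gives $\tau^*=-\tau$. Thus reflections send $(\kappa,\tau)\mapsto(\kappa,-\tau)$.

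Finally I would substitute $(\kappa^*,\tau^*)=(\kappa,-\tau)$ into equations \eqref{eq:geods1}--\eqref{eq:geods3} and seek parameters $(a^*,b^*)$ such that the transformed curve still satisfies them. Equation \eqref{eq:geods1} becomes $\kappa''=\kappa\bigl[\tau(\tau+b)+(1+a^2-\kappa^2)/2\bigr]$, which matches the original form precisely when $b^*=-b$ and $(a^*)^2=a^2$, i.e.\ $a^*=a$ (taking $a\ge 0$). With these choices, \eqref{eq:geods2} becomes
\[
(\kappa^*)^2(2\tau^*-b^*)=\kappa^2(-2\tau+b)=-b(a^2-1)=b^*(a^2-1),
\]
and in \eqref{eq:geods3} the only affected term is $\kappa^2(\tau-b)^2$, which is invariant under $(\tau,b)\mapsto(-\tau,-b)$ since $(\tau^*-b^*)=-(\tau-b)$. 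So the transformed $(\kappa,\tau)$ solve \eqref{eq:geods1}--\eqref{eq:geods3} with $(a,b)$ replaced by $(a,-b)$. There is no real obstacle here; the only subtle point is being careful with the sign conventions for the Frenet frame discussed in Remark \ref{rmrk:FS}, and noting that the above relations are invariant under the involution $(\T,\N,\B,\kappa,\tau)\mapsto(\T,-\N,-\B,-\kappa,\tau)$, so the conclusion does not depend on that choice.
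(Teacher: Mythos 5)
Your proposal is correct and takes essentially the same approach as the paper, whose proof is a two-sentence sketch ("The transformation of $\kappa,\tau$ follows from the Frenet--Serret equations; the transformation of $(a,b)$ then follows from equations \eqref{eq:geods1}--\eqref{eq:geods3}"). You simply fill in all the details, including the implicit prerequisite that Euclidean isometries of $\R^3$ lift to \sR\ isometries of $(Q,\D,g)$ so that reflections genuinely send bicycle geodesics to bicycle geodesics.
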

\begin{proof}The transformation  of $\kappa,\tau$ follows from the Frenet-Serret equations \eqref{eq:FS}. The transformation of $(a,b)$ then follows from  equations  \eqref{eq:geods1}-\eqref{eq:geods3}. 
\end{proof}

Therefore in what follows we will  consider only the parameter values $a,b\geq 0$. See Figure \ref{fig:phase}.
\begin{figure}[h]\centering
\includegraphics[width=0.7\textwidth]{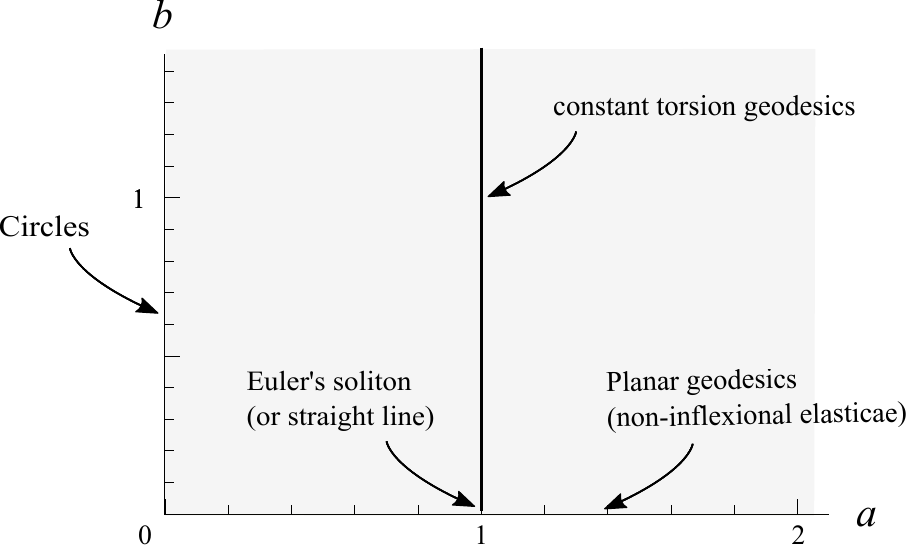}
\caption{The parameter space of bicycle  geodesics. } \label{fig:phase}
\end{figure}

\begin{lemma}
All values of the parameters $a,b\geq 0 $ in equations \eqref{eq:geods1}-\eqref{eq:geods3} occur among non-linear front tracks of bicycling geodesics. 
\end{lemma}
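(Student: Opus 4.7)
The plan is to realize each pair $(a,b)\in[0,\infty)^2$ by prescribing initial data for the Hamiltonian system \eqref{eq:ham} and reading off the conserved invariants. Recall that the parameters attached to a solution are determined by $|\p|^2 = a^2+b^2$ and $b=\p\cdot(\v_0\times\x'_0)$ (Lemma \ref{lemma:bc}), subject to the admissibility conditions $|\v_0|=|\x'_0|=1$ and $\v_0\cdot(\x'_0-\p)=0$ that cut out $T^*Q$ (Lemma \ref{lemma:rv}). The task thus reduces to exhibiting, for each $(a,b)$, such initial data whose resulting front track is not a straight line.

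For the degenerate case $a=b=0$ I would take $\p=0$ and let the front track be a unit circle with the back wheel fixed at its center (Lemma \ref{lemma:bc}(ii)); direct substitution shows that this curve, with $\kappa\equiv 1$ and $\tau\equiv 0$, satisfies \eqref{eq:geods1}--\eqref{eq:geods3}. Otherwise set $c:=\sqrt{a^2+b^2}>0$ and, in a fixed orthonormal frame $\e_1,\e_2,\e_3$ of $\R^3$, take
$$\p=c\,\e_3,\qquad \v_0=\e_1,\qquad \x'_0=\tfrac{b}{c}\e_2-\tfrac{a}{c}\e_3,$$
with any base point $\x_0$. A short computation with cross and dot products confirms that this initial datum is admissible and produces the prescribed values of $|\p|^2$ and $\p\cdot(\v_0\times\x'_0)$, so by Lemma \ref{lemma:bc} and the derivation of \eqref{eq:geods1}--\eqref{eq:geods3} the associated geodesic has the right parameters.

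The only step requiring care is verifying non-linearity. By Lemma \ref{lemma:bc}(iii), a linear front track forces $\r_0=\x'_0-\p=0$. In our construction the $\e_3$-component of $\r_0$ equals $-(a+c^2)/c$, which is strictly negative for $c>0$, so $\r_0\neq 0$. The sign choice in the $\e_3$-component of $\x'_0$ was made precisely for this purpose: it handles the critical parameter value $(a,b)=(1,0)$, where the opposite sign would collapse $\x'_0$ onto $\p$ and yield a straight line rather than a bona fide non-linear representative. Modulo this minor but essential sign check, the argument is otherwise routine linear algebra, and I do not anticipate any further obstacle.
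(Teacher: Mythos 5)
Your proof is correct and follows the same basic strategy as the paper's: exhibit explicit admissible initial data $(\v_0,\x'_0,\p)$ realizing each pair $(a,b)$ via the conserved quantities $|\p|^2=a^2+b^2$ and $b=\p\cdot(\v_0\times\x'_0)$. The paper simply takes $\v_0=(1,0,0)$, $\x'_0=(0,1,0)$, $\p=(0,a,b)$. Your choice differs in that you align $\p$ with a coordinate axis and let $\x'_0$ have a nonzero component along $-\p$, and this is in fact an improvement: at $(a,b)=(1,0)$ the paper's data give $\x'_0=\p$, hence $\r_0=0$, and (as in the proof of Lemma \ref{lemma:bc}(iii)) $\r\equiv 0$, so that particular solution is a \emph{straight line} rather than a non-linear front track. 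The paper is silently relying on the later Proposition \ref{prop:planar}(a), which identifies the Euler soliton as a non-linear track with those parameter values, whereas your sign choice makes $\r_0\neq 0$ for every $(a,b)$ with $c=\sqrt{a^2+b^2}>0$ and so delivers a non-linear representative uniformly. The one small cost is that your formula for $\x'_0$ has a $1/c$ and so requires treating $(a,b)=(0,0)$ separately (the unit circle with $\p=0$), whereas the paper's formula degenerates gracefully there; that is a minor point, and your treatment of it is correct.
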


\bp Let $\v_0:=(1,0,0)$, $\x'_0:=(0,1,0)$, $\p:=(0,a,b)$. Then $\v_0\cdot (\x_0'-\p)=0$, so these are admissible initial conditions for equations \eqref{eq:mag_geod}. The solution is a bicycling geodesic with  $\p\cdot(\v_0\times \x_0')=b,$ $|\p|^2=a^2+b^2,$ as needed. 
\ep

\begin{prop}\label{prop:range}The curvature and torsion of  non-linear  geodesic front tracks, except for the Euler soliton ($a=1, b=0$),   are periodic elliptic functions, varying in the following ranges (note the `doubling discontinuity' of  the range of $\kappa$ at $a=1$):
\renewcommand{\arraystretch}{1.5} 
\begin{flalign*}
&\begin{array}{  r ll l}
{\rm (i)}& \mbox{For  } 0\leq  a < 1:  & \quad - \frac{ab}{1 + a}\ \leq \ \tau\ \leq\  \frac{ab}{1 - a}, & \qquad 1-a\leq \kappa\leq 1+a.\\
 {\rm (ii)}& \mbox{For  } a>1: & \quad \frac{ab}{1-a}\ \leq \ \tau\ \leq\   -\frac{ab}{1+a},&\qquad a-1\leq \kappa\leq a+1. \\
 {\rm (iii)}&\mbox{For  } a = 1:&\quad \tau=b/2,  &\qquad  -2\leq \kappa \leq 2.
\end{array}&&
\end{flalign*}
\end{prop}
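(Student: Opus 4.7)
My plan is to reduce the coupled system \eqref{eq:geods1}--\eqref{eq:geods3} to a single first-order ODE $(u')^2 = P(u)$ for $u := \kappa^2$, where $P$ is a cubic polynomial whose roots I can write down explicitly, and then read off all of the claimed ranges and the elliptic/periodic property directly from the root structure of $P$.

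First I would use the algebraic constraint \eqref{eq:geods2} to eliminate $\tau$: whenever $\kappa \neq 0$,
\[
\tau = \frac{b}{2} + \frac{b(a^2-1)}{2\kappa^2}.
\]
Substituting this into the conserved quantity \eqref{eq:geods3} and using $(u')^2 = 4u(\kappa')^2$, one gets an equation of the form $(u')^2 = P(u)$ with
\[
P(u) \;=\; 4u(a^2+b^2) \;-\; u(1+a^2-u)^2 \;-\; b^2(a^2-1-u)^2,
\]
a cubic with leading coefficient $-1$. A direct evaluation shows that both $u = (1-a)^2$ and $u = (1+a)^2$ are roots of $P$; matching the constant term $P(0) = -b^2(a^2-1)^2$ pins down the third root as $-b^2$, so
\[
P(u) \;=\; -\bigl(u - (1-a)^2\bigr)\bigl(u - (1+a)^2\bigr)(u + b^2).
\]

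Since $u + b^2 > 0$ on the relevant part of $u \geq 0$, the inequality $(u')^2 = P(u) \geq 0$ confines $u = \kappa^2$ to lie between $(1-a)^2$ and $(1+a)^2$, which immediately yields the three claimed ranges for $\kappa$ in cases (i)--(iii). The ``doubling discontinuity'' at $a = 1$ arises because $(1-a)^2 = 0$ permits $\kappa^2$ to vanish, and by the extension convention for the Frenet--Serret frame (Remark \ref{rmrk:FS}) the signed curvature $\kappa$ then sweeps through $[-2, 2]$ rather than $[0,2]$. The corresponding range of $\tau$ is obtained by plugging the boundary values $\kappa^2 = (1 \pm a)^2$ into the formula for $\tau$: since $\tau$ is monotone in $1/\kappa^2$, its extrema occur precisely at these boundary values, yielding the stated bounds in (i) and (ii). In case (iii), the constraint \eqref{eq:geods2} degenerates to $\kappa^2(2\tau - b) = 0$, which forces $\tau \equiv b/2$ wherever $\kappa \neq 0$, and by analyticity of $\tau$ as a function of arc length this persists through the isolated zeros.

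For the elliptic/periodic statement, I would invoke the standard Weierstrass theory: whenever $(a,b) \neq (1,0)$ the three roots $(1-a)^2$, $(1+a)^2$, $-b^2$ of $P$ are distinct, so $u$ is, up to an affine change of independent variable, a Weierstrass $\wp$-function, making $\kappa$ (and hence $\tau$) a periodic elliptic function of arc length. The sole exceptional case is $a = 1$, $b = 0$: then $P(u) = -u^2(u - 4)$ acquires a double root at $u = 0$, the solution degenerates to $\kappa(s) = 2\,\mathrm{sech}(s)$, and the curve is the (non-periodic) Euler soliton --- exactly the excluded case in the proposition.

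The main technical obstacle I anticipate is careful bookkeeping at $a = 1$, where $\kappa$ vanishes at isolated points: one must rely on the analytic extension of the Frenet--Serret frame through inflection points and verify that $\tau = b/2$ is the correct interpretation at the zeros of $\kappa$, even though the intermediate formula $\tau = b/2 + b(a^2-1)/(2\kappa^2)$ is a $0/0$ there. Apart from this subtlety and checking the monotonicity direction when reading off the $\tau$-endpoints, the proof reduces to the single polynomial factorization above.
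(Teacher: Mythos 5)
Your proof follows essentially the same route as the paper's: eliminate $\tau$ using \eqref{eq:geods2}, reduce to $(u')^2 = P(u)$ with $P(u) = (u+b^2)\left[(1+a)^2-u\right]\left[u-(1-a)^2\right]$, read the $\kappa$-range off the two non-negative roots, handle the $a=1$ sign change via Remark~\ref{rmrk:FS}, and deduce ellipticity from the root structure. The paper phrases the last step as a phase-portrait argument (the phase point traverses a closed oval on which the vector field does not vanish), while you invoke Weierstrass theory directly; these come to the same thing, so the approach is not genuinely different.

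Two points to tighten. Your claim ``whenever $(a,b)\neq(1,0)$ the three roots of $P$ are distinct'' misses $a=0$, where $(1-a)^2=(1+a)^2=1$ is a double root (the circle); this is harmless since the constant solution is trivially periodic, but the statement as written is inaccurate. More substantively, you assert that plugging the endpoint values $\kappa^2=(1\pm a)^2$ into $\tau = \frac{b}{2}+\frac{b(a^2-1)}{2\kappa^2}$ ``yields the stated bounds'' without carrying out the substitution. Doing so gives $\tau=-\frac{ab}{1-a}$ at $\kappa^2=(1-a)^2$ and $\tau=\frac{ab}{1+a}$ at $\kappa^2=(1+a)^2$, so for $0<a<1$ the range is $-\frac{ab}{1-a}\le\tau\le\frac{ab}{1+a}$, with the denominators $1\pm a$ exchanged relative to what the proposition prints (a quick sanity check: $a=\tfrac{1}{2}$, $b=1$ gives $\tau\in\left[-1,\tfrac{1}{3}\right]$, not $\left[-\tfrac{1}{3},1\right]$; and similarly for $a>1$ one finds $\tau$ strictly positive, whereas the printed bounds are both negative). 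The paper's own displayed inequality chain carries typos as well ($2\tau+b$ in place of $2\tau-b$, and a final interval that is vacuous for $a<1$, $b>0$), so this appears to be a typographical slip in the stated $\tau$-ranges rather than an error in your method. The lesson is that the final ``plug in the endpoints'' step is worth performing explicitly --- had you done so, it would have surfaced the discrepancy.
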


\bp We use  equation \eqref{eq:geods2} to eliminate $\tau$ from   equation 
\eqref{eq:geods3}. Then, setting $u:=\kappa^2$, we get  
\be\label{eq:kbounds}
(u')^2 = P(u), 
\ee
where 
$$
P(u)=(u+ b^2)\left[  (1 + a)^2-u \right] \left[u - (1 - a)^2\right].
$$
Equation  \eqref{eq:kbounds} defines an oval in the  $(u,u')$ right half plane $u\geq 0$, an integral curve of the  vector field $u'\partial_u+{1\over 2} {dP\over du}\partial_{u'}$. This vector filed does not vanish along the oval, since $P$ has no multiple roots for $a,b>0$ (except for $a=1, b=0$,   the Euler soliton, see below). See  Figure \ref{fig:uphase}. 
\begin{figure}[h]
\center
\includegraphics[width=0.5\textwidth]{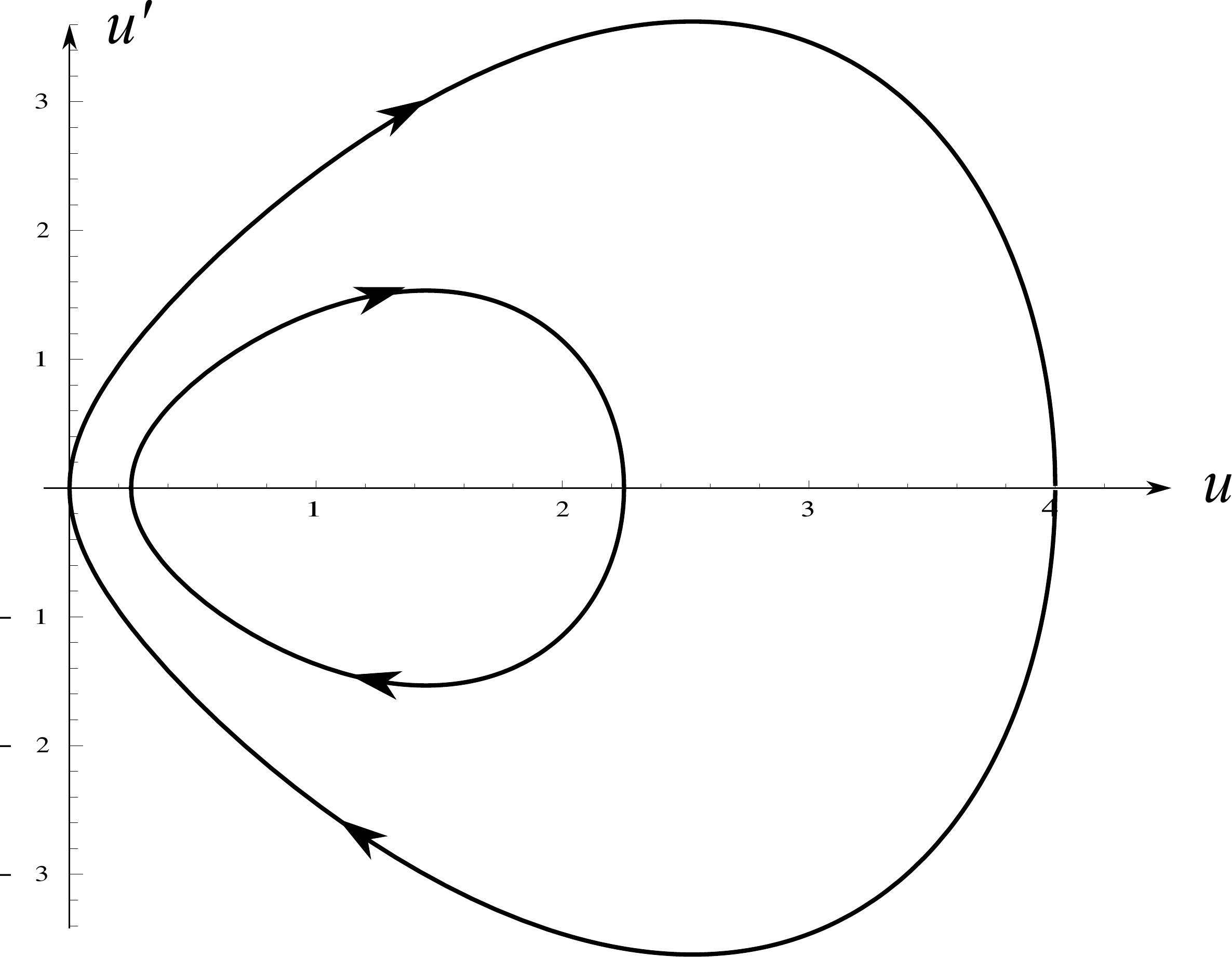}
\caption{Phase portrait of equation \eqref{eq:kbounds}, for $a=.5, b=1$ (inner oval), $a=1, b=1$ (outer  oval).}\label{fig:uphase}
\end{figure}

Consequently, the  phase point of equation \eqref{eq:kbounds}
moves  clockwise along this closed oval, so that $u$ oscillates periodically between the two non-negative  roots $(1\pm a)^2$ of $P(u)$. 
For $a\neq 1$, since $u=\kappa^2\geq (1-a)^2>0$, this gives the claimed range of $\kappa$.  

  For $a=1$ one needs to be more careful. See the outer oval of Figure \ref{fig:uphase}. The range of $u=\kappa^2$ is $[0,4]$, so $\kappa$ traverses the range $[0,2]$ as the phase point $(u,u')$ goes once around the oval, starting and ending at $u=u'=0$.  Now $u=0$ implies   $\kappa=0$, an inflection point of the front track, see Remark \ref{rmrk:FS}.  By equation \eqref{eq:geods3}, at this point $(\kappa' )^2=b^2>0$, so $\kappa$ changes sign. Going once more around the oval, $\kappa$ now traverses the range  $[-2,0]$.

The range of $\tau$ is  obtained  from that  of $\kappa$ via equation \eqref{eq:geods2}. For example, when $a< 1$, we have $\kappa^2(2\tau - b) = b(a^2-1) < 0$, so that
$2\tau - b < 0$ and
$$ (1 + a)^2(2\tau + b) \le b(1 - a^2) \le (1 - a)^2(2\tau + b)
$$
since $(1 - a)^2\le \kappa^2\le (1 + a)^2$. Rearranging, we find $\frac{ab}{1 - a} \le \tau \le -\frac{ab}{1+a}.$ The cases $a>1$ or $a = 1$ are similar. 


Since $P(u)$ is cubic in $u$, the solutions of \eqref{eq:kbounds} are     elliptic functions (doubly periodic in the complex  domain) 
and so are $\kappa(t)$ and $\tau(t)$.  See the Appendix for explicit formulas. 
\ep 

\begin{cor} \label{cor:range}
\begin{enumerate}[{\rm (a)}]

\item      Non-linear front tracks with  constant curvature  are  unit circles ($\kappa=1$, $a=0$).

\item     Non-linear front tracks with   constant torsion $\tau\neq 0$ correspond to $a=1$, $b=2\tau$.

\item   Front tracks  with $a>1$ have nowhere vanishing torsion, while those with $0 < a < 1$ have torsion of mixed signs. In both cases the curvature is non-vanishing (positive). 

\item    Front tracks with $a=1, b>0 $ (constant non-vanishing torsion) have curvature of mixed sign.

\item     Geodesic front tracks which are elastic curves ($a_2=0$ in equations \eqref{eq:rods1}-\eqref{eq:rods2}) are  planar  non-inflectional elasticae ($b=0$), as in Proposition \ref{prop:planar}(a) below. See Figure \ref{fig:elasticae}. 

\end{enumerate}
\end{cor}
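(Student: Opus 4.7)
The entire corollary reads information off of Proposition \ref{prop:range} and the three governing equations \eqref{eq:geods1}--\eqref{eq:geods3}, together with the parameter correspondence \eqref{eq:ab}. So my plan is to address each item in turn by quoting the range inequalities and doing minor algebra with \eqref{eq:geods2} and \eqref{eq:geods3}; the work is straightforward case-splitting on the value of $a$ rather than a new computation.

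For (a), I would note that if $\kappa$ is constant then the closed interval of values given by Proposition \ref{prop:range} must degenerate to a single point: for $a\neq 1$ the interval has length $2$ (either $[|1-a|,1+a]$ or $[a-1,a+1]$), and for $a=1$ the interval is $[-2,2]$; the only way this collapses is $a=0$, which then forces $\kappa\equiv 1$. Substituting into \eqref{eq:geods2} gives $2\tau-b=-b$, hence $\tau\equiv0$, identifying the track as a unit circle. For (b), assume $\tau$ is constant and nonzero. Then by (a) the curvature cannot also be constant, so in \eqref{eq:geods2} the only way $\kappa^2(2\tau-b)$ is constant with $\kappa$ varying is $2\tau-b=0$; the right side then forces $b(a^2-1)=0$, and since $\tau\neq0$ we have $b\neq 0$, hence $a=1$ and $b=2\tau$.

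For (c) and (d), I would just quote Proposition \ref{prop:range}. When $a>1$ the interval for $\tau$ in Proposition \ref{prop:range}(ii) lies in $(-\infty,0]$, so $\tau$ never changes sign (and is strictly negative when $b>0$); when $0<a<1$ the interval in Proposition \ref{prop:range}(i) has endpoints of opposite sign, so $\tau$ changes sign; and in both cases $\kappa\geq|1-a|>0$. Case (d) is immediate from Proposition \ref{prop:range}(iii), noting that the analysis of the oval in its proof showed that the full range $[-2,2]$ is actually achieved, with $\kappa$ changing sign at each inflection point.

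For (e), I would read off from \eqref{eq:ab} that $a_2=b$, so the elasticity condition $a_2=0$ is exactly $b=0$. Then \eqref{eq:geods2} reduces to $\kappa^2(2\tau)=0$, giving $\tau\equiv0$ wherever $\kappa\neq0$; by analyticity and a standard Frenet argument the curve is planar. Non-inflectionality (i.e.\ $\kappa$ nowhere zero) requires $a\neq1$ by Proposition \ref{prop:range}, so the curve is a planar non-inflectional elastica, referring to Proposition \ref{prop:planar}(a). No single step here is hard; the only mild subtlety is being careful with the $a=1$ boundary in (a) and (e), where $\kappa$ is allowed to vanish and one must invoke the doubling of the range observed in the proof of Proposition \ref{prop:range}.
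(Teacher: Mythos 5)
Your proof is correct and fills in precisely the deductions the paper leaves to the reader with its one-line remark that the statements ``follow immediately from Proposition~\ref{prop:range}''. Two minor slips, neither affecting the conclusions: in part (a), for $0<a<1$ the interval $[1-a,1+a]$ has length $2a$, not $2$ (only for $a>1$ is it length $2$), though the range still collapses to a point exactly when $a=0$; and in part (e) the remark that non-inflectionality ``requires $a\neq1$'' is an overstatement, since the Euler soliton ($a=1$, $b=0$) is excluded from Proposition~\ref{prop:range} yet also has nowhere-vanishing $\kappa=2\,\mathrm{sech}(t)$, so the conclusion that $b=0$ gives planar non-inflectional elasticae holds in that case as well.
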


All these statements follow immediately from Proposition \ref{prop:range}.

\begin{prop} \label{prop:planar}
\begin{enumerate}[{\rm (a)}]
 \item  Non-linear planar front tracks ($\tau=0$) are   non-inflectional elasticae ($b=0$), as in \cite{2D}. The  parameter values  of the Euler soliton coincide with those of  the straight line   ($a=1$, $b = 0$, see Lemma {\rm\ref{lemma:bc}(iii)}). The plane of the motion is parallel to $\p$. 
\item   The curvature of  non-planar front tracks with constant torsion ($a=1, b>0$)   is   that of  {\em inflectional} planar elasticae. 
\end{enumerate}
\end{prop}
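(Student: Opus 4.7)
My plan is to treat the two parts separately, exploiting the algebraic constraints \eqref{eq:geods1}--\eqref{eq:geods3} together with the vector data in Lemma \ref{lemma:bc}.

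For part (a), I would assume $\tau\equiv 0$ and feed this into equation \eqref{eq:geods2} to obtain $-b\kappa^2 = b(a^2-1)$, so that either $b=0$ or $\kappa^2 = 1-a^2$ is constant. In the second case $\kappa'=0$, and substituting into \eqref{eq:geods1} yields $\kappa\,a^2 = 0$, forcing either $\kappa=0$ (a straight line) or $a=0$ and $\kappa=1$ (a unit circle), neither of which is a non-linear, non-circular front track. Hence the interesting case is $b=0$, in which equations \eqref{eq:geods1}--\eqref{eq:geods3} reduce exactly to the elastica equations appearing in \cite{2D}, and the sign-definiteness of $\kappa$ from Corollary \ref{cor:range} identifies them as the non-inflectional branch. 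For the Euler soliton, I would use the characterization that $\kappa,\kappa'\to 0$ at the ends and read off from \eqref{eq:geods3} (with $b=\tau=0$) the relation $(1+a^2)^2 = 4a^2$, i.e., $a=1$; for the straight line, Lemma \ref{lemma:bc}(iii) gives $b=0$ and $|\p|=1$, and $|\p|^2=a^2+b^2$ then forces $a=1$. For the ``plane parallel to $\p$'' claim, I would note that in a planar motion $\v$ and $\x'$ both lie in (a translate of) the plane, so $\v\times\x'$ is normal to it; the vanishing $b = \p\cdot(\v\times\x') = 0$ therefore places $\p$ in the direction of the plane.

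For part (b), I would differentiate equation \eqref{eq:geods2} using $\tau' = 0$ to get $2\kappa\kappa'(2\tau-b) = 0$. Since a non-circular front track has non-constant $\kappa$ on an open set (see Proposition \ref{prop:range}), this forces $2\tau = b$; feeding this back into \eqref{eq:geods2} gives $b(a^2-1)=0$, so for $b>0$ we conclude $a=1$. Substituting $a=1$ and $\tau = b/2$ into \eqref{eq:geods1} yields
\[
\kappa'' \;=\; \kappa\!\left[\,1 - \tfrac{b^2}{4} - \tfrac{\kappa^2}{2}\,\right],
\]
which is precisely the planar-elastica ODE $\kappa'' = \kappa(c - \kappa^2/2)$ with $c = 1-b^2/4$. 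By Proposition \ref{prop:range}(iii), $\kappa$ takes values in $[-2,2]$ and passes through zero, so this ODE is satisfied by a curvature of \emph{inflectional} type (as opposed to the non-inflectional one produced in (a)).

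The main obstacle is bookkeeping the degenerate sub-cases rather than any hard computation: one must verify that the exceptional constant-$\kappa$ solutions of the intermediate equations are only the line and unit circle already accounted for, and that the involution of Remark \ref{rmrk:FS} does not muddle the identification of the elastica equation in part (b) when $\kappa$ changes sign at inflection points. Both are handled by a direct check against the conserved quantity \eqref{eq:geods3} and Proposition \ref{prop:range}.
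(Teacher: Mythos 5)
Your proposal is correct and follows essentially the same route as the paper: use \eqref{eq:geods2} to reduce $\tau=0$ to the dichotomy ``$b=0$ or $\kappa$ constant,'' dispose of the constant-$\kappa$ branch as line/circle, and read off the elastica ODEs from \eqref{eq:geods1}--\eqref{eq:geods3}. The only differences are cosmetic: where the paper cites Proposition~\ref{prop:range} to conclude that $\tau\equiv 0$ forces $a=0$ or $b=0$, and that constant nonzero torsion forces $a=1$, $\tau=b/2$, you rederive these directly from \eqref{eq:geods1}--\eqref{eq:geods2} (differentiating the torsion constraint to get $2\kappa\kappa'(2\tau-b)=0$ in part~(b), and examining the constant-$\kappa$ sub-case in part~(a)); and for the ``plane parallel to $\p$'' claim you use the geometric observation $b=\p\cdot(\v\times\x')=0$ with $\v\times\x'$ normal to the plane, while the paper extracts the same fact from the $\B$-component of \eqref{eq:pconst}. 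Both versions buy the same result; yours is slightly more self-contained, the paper's slightly more economical by leaning on the already-proved range bounds.
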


\bp \begin{enumerate}[(a)]
\item By Proposition  \ref{prop:range},  $\tau=0$ occurs if and only if  $a=0$ or $b=0$, and  $a=0$ corresponds to unit circles.  If $b=0$ then equation \eqref{eq:geods3} becomes
$$(\kappa ')^2+\frac{1}{4} \left( 1 + a^2 - \kappa^2 \right)^2=a^2, 
$$
which is the equation for  non-inflectional elasticae appearing as planar geodesic front track, see \cite[Proposition 4.3]{2D}. 
Among these, the Euler soliton corresponds to $a=1$. The statement about the plane of the motion follows from formula 
\eqref{eq:pconst}.

\item If $\tau$ is constant and non-vanishing then, by  Proposition  \ref{prop:range},  $b=2\tau$ and $a=1.$ Equations \eqref{eq:geods1} and \eqref{eq:geods3} then become 
\be\label{eq:inf}
 \kappa'' +{1\over 2}\kappa^3+A\kappa=0, \quad (\kappa')^2 + \left( {\kappa^2\over 2}+A\right)^2=A^2+b^2, 
\ee
where $$ A={b^2\over 4}-1.$$ For $b> 0$ these are  equations for the curvature of  inflectional planar elasticae. See for example \cite[\S3.1]{2D}.
\end{enumerate}
\ep



\begin{prop} \label{prop:closed} The only closed bicycling geodesics are those whose front tracks are unit circles ($a=0$). 
\end{prop}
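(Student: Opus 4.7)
The plan is to reduce closure of a bicycling geodesic to a single scalar condition on the average of $\kappa^2$, evaluate that average in closed form using the elliptic ODE for $\kappa^2$, and show the condition fails outside the unit-circle case.

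The case $a=0$ is immediate from Corollary \ref{cor:range}(a) and Lemma \ref{lemma:bc}(ii): the front track is a unit circle, the back wheel stays fixed at its center, and the bicycle configuration closes trivially. Assume therefore $a>0$. A closed bicycling geodesic has a closed front track $\x$, so $\int_0^{T_c}\T\,dt=0$ for some period $T_c$. Dotting with the constant vector $\p$ and using the $\T$-component of \eqref{eq:pconst},
\[ 0 \;=\; \int_0^{T_c}\p\cdot\T\,dt \;=\; \tfrac12\int_0^{T_c}\bigl(1+a^2-\kappa^2(t)\bigr)\,dt.\]
By Proposition \ref{prop:range}, $\kappa^2$ is a non-constant periodic elliptic function with some minimal period $T_\ast$; since $T_c$ must be an integer multiple of $T_\ast$, this is equivalent to $\langle\kappa^2\rangle = 1+a^2$, the average being taken over one period of $\kappa^2$.

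Next I would convert the average to an integral over $u=\kappa^2$ via the ODE $(u')^2=P(u)$ from the proof of Proposition \ref{prop:range}, with
\[ P(u) = (u+b^2)\bigl[(1+a)^2-u\bigr]\bigl[u-(1-a)^2\bigr].\]
The crucial observation is that $1+a^2$ is exactly the midpoint of $[u_{\min},u_{\max}]=[(1-a)^2,(1+a)^2]$, since $u_{\min}+u_{\max}=2(1+a^2)$. The substitution $v=u-(1+a^2)$ therefore centers the problem: the interval becomes $[-2a,2a]$ and $P$ factors cleanly as $(v+c)(4a^2-v^2)$ with $c:=1+a^2+b^2$. Using $\int_0^{T_\ast}g(u)\,dt=2\int_{u_{\min}}^{u_{\max}}g(u)\,du/\sqrt{P(u)}$, the closure obstruction reduces to showing
\[ I \;:=\; \int_{-2a}^{2a}\frac{v\,dv}{\sqrt{(v+c)(4a^2-v^2)}} \;\neq\; 0.\]

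To see $I\ne 0$ I would fold $[-2a,2a]$ about $v=0$, obtaining
\[ I \;=\; \int_0^{2a}\frac{v}{\sqrt{4a^2-v^2}}\left(\frac{1}{\sqrt{c+v}}-\frac{1}{\sqrt{c-v}}\right)dv.\]
The bracket is strictly negative on $(0,2a)$ whenever $c-v>0$, i.e.\ $c>2a$; and $c-2a=(a-1)^2+b^2$ is strictly positive for every $(a,b)$ with $a>0$ except the single Euler-soliton point $(a,b)=(1,0)$. Hence $I<0$ in every relevant case, contradicting the closure assumption. The soliton point is disposed of by Proposition \ref{prop:planar}(a), since Euler solitons are asymptotic to a line and are therefore not periodic to begin with.

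The one step I expect to require care is the period-alignment claim that $T_\ast$ divides $T_c$. For $a\neq 1$ the curvature $\kappa$ stays strictly positive and has the same minimal period as $\kappa^2$, so divisibility is automatic from periodicity of $\kappa$ along the closed curve; at $a=1$ the curvature passes through zero and $\kappa^2$ has half the minimal period of $\kappa$, which only strengthens divisibility. The genuine content of the proof is therefore the coincidence $u_{\min}+u_{\max}=2(1+a^2)$ together with the asymmetry of $P$ about $v=0$ forced by the $(v+c)$ factor---this is the one nontrivial ingredient.
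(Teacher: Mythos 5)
Your proof is correct, but it takes a genuinely different route from the paper's. The paper integrates the identity $(\p\cdot\v)' = (\p\cdot\x)' - (\p\cdot\v)^2$ (equation \eqref{eq:pv}) over a period: since $\p\cdot\v$ and $\p\cdot\x$ are periodic for a closed geodesic, both total derivatives integrate to zero, forcing $\int(\p\cdot\v)^2\,dt=0$, hence $\p\cdot\v\equiv 0$; from there one deduces $\p\cdot\x'\equiv 0$ and $\x'\cdot\v\equiv 0$, so $\p,\v,\x'$ are pairwise orthogonal, giving $b^2=|\p|^2=a^2+b^2$ and $a=0$. You instead extract only the weaker consequence $\int\p\cdot\T\,dt=0$, translate it via \eqref{eq:pconst} into the period-average $\langle\kappa^2\rangle=1+a^2$, and then rule this out by an explicit sign analysis of the elliptic period integral, exploiting the coincidence $u_{\min}+u_{\max}=2(1+a^2)$ and the asymmetry of $P(u)$ introduced by the factor $u+b^2$. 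Both arguments are sound. The paper's is shorter and stays entirely in the Hamiltonian framework without touching the explicit cubic $P(u)$; yours is more computational but makes the obstruction concrete (the strict negativity $I<0$ quantifies how far $\langle\kappa^2\rangle$ falls short of $1+a^2$) and as a by-product shows that even the front track alone cannot close for $a>0$, not just the full configuration curve. Your handling of the period-divisibility issue, the folding about $v=0$ under the hypothesis $c-2a=(a-1)^2+b^2>0$, and the separate disposal of the Euler soliton at $(a,b)=(1,0)$ are all correct.
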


\bp By the second equation of  \eqref{eq:ham}, 
\be\label{eq:pv}
(\p\cdot\v)'= \p\cdot [\x' -  (\p\cdot \v)\v]=(\p\cdot \x)'-(\p\cdot \v)^2.
\ee
 Integrating this over a period, $ \int(\p\cdot \v)^2=0,$ so $\p\cdot \v=0.$ 
 
 It also follows from 
 \eqref{eq:pv} that $\p\cdot \x'=(\p\cdot \v)'+(\p\cdot\v)^2=0,$ and $\x'\cdot\v=(\p+\r)\cdot\v=\p\cdot \v =0.$ Thus $\p, \v, \x'$ are pairwise orthogonal. It  follows  that  
 $$
 b^2=|\p\cdot(\v\times\x')|^2=|\p|^2|\v|^2 |\x|^2=|\p|^2=a^2+b^2,
 $$ hence $a=0$,  and by Corollary  \ref{cor:range}(a) we have a unit circle. 
 \ep

\subsection{Period doubling}

Let us consider a geodesic front track with parameter values $(a,b)\neq (1,0)$ (all cases,  except a straight line and Euler's soliton). 

Denote the period of   $\kappa^2$ by $T.$ 
Using equation \eqref{eq:kbounds}, one can write it explicitly:
$$T=2\int _{(1-a)^2}^{(1+a)^2}{du\over \sqrt{(u+ b^2)\left[  (1 + a)^2-u \right] \left[u - (1 - a)^2\right]}}.
$$
(In the Appendix we express this integral using standard elliptic integrals.) Clearly, $T(a,b)$ is   continuous in $a,b>0$ (it is even analytic). 

For $a\neq 1$, one has $\kappa^2\geq (1-a)^2>0$, hence $T(a,b)$   is  also the period of $\kappa>0.$ However,  for $a=1$, as mentioned during the proof of Proposition \ref{prop:range}, there is a point along the front track  with $\kappa^2=0$,    an inflection point, where $\x''=0$. See the outer oval of Figure \ref{fig:uphase}.

This is  exactly the case mentioned in Remark \ref{rmrk:FS}. Furthermore, equation \eqref{eq:geods3} implies that at this point $(\kappa')^2=b^2>0,$ so $\kappa$ changes sign as $\x$ crosses this inflection point. It is not until $\x$ reaches the next inflection point, that $\kappa$ completes a full period. Thus,   at $a=1$ there is a {\em period doubling} phenomenon of the front track's curvature. See Figure \ref{fig:double}.  

 \begin{figure}[h]\centering

\includegraphics[width=0.5\textwidth]{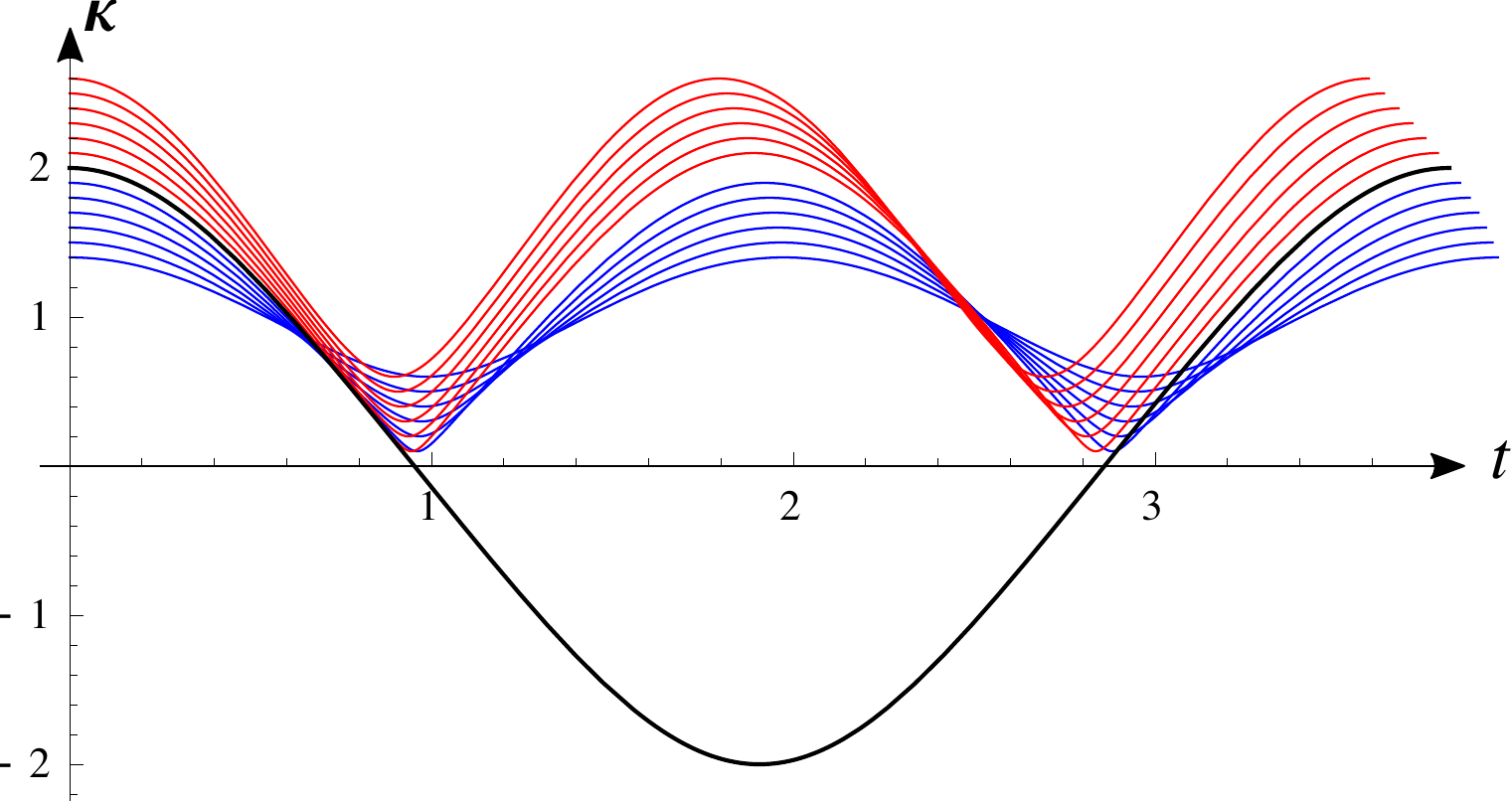}\quad 

\caption{Period doubling of the curvature of geodesic front tracks at parameter value $a=1$ (constant torsion). The figure shows a plot of the curvature $\kappa(t)$ of the front track over 2 periods of $\kappa^2$, for various values of $a$, at  fixed $b=1$. 
Blue: $a<1$. Red: $a>1$. Black: $a=1$.} \label{fig:double}
\end{figure}

\subsection{Back tracks}

We have focused so far on describing the front tracks  of  bicycling geodesics. In general, given a front track (a curve in $\R^3$), there is an  $S^2$-worth of associated  back tracks satisfying the no-skid condition, given by the initial frame position at some point along the front track.  For a linear front track, any back track   (a tractrix) will complete it to a bicycling geodesic. 

But this is an exception.  The next proposition states that back  tracks of all other bicycling geodesics are determined uniquely by their front tracks.

\begin{prop}\label{prop:bt}
Consider a non-linear  geodesic   front track $\x(t)$, parametrized by arc length. Then at a point of the front track with maximum curvature value,  where $\kappa =1+a $ (see Proposition \ref{prop:range}), the bicycle frame $\v$ is perpendicular to the front track and anti-aligned with  the   acceleration vector:  
%
%
\be\label{eq:ddx}\v=-{\x''\over 1+a}.
\ee
\end{prop}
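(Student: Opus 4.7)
The plan is to evaluate $\p$ at the maximum point using the conserved expression in \eqref{eq:pconst}, and then to read off $\v$ from the constraints $\r\cdot\v=0$ and $\K=\r\times\v$.

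First I would substitute $\kappa = 1+a$ and $\kappa'=0$ into \eqref{eq:pconst} to eliminate the $\N$-component and obtain
\[
\p = -a\,\T \;-\; (1+a)(\tau - b)\,\B .
\]
Then, using \eqref{eq:geods2} at $\kappa = 1+a$, one finds $\tau - b = -b/(1+a)$ (the factor $1+a$ cancels cleanly since $a\ge 0$). This collapses the above to
\[
\p = -a\,\T + b\,\B, \qquad \r = \T - \p = (1+a)\,\T - b\,\B,
\]
so in particular $\p,\r$ both lie in the $\T\B$-plane at this point.

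Next I would combine this with formula \eqref{eq:rel}, which gives $\K = \r\times\v = -b\,\T - (1+a)\,\B$, again lying in the $\T\B$-plane. Since $\r\times\v$ is automatically perpendicular to $\v$, the vector $\v$ is perpendicular to both $\r$ and to $\K=\r\times\v$; and these two vectors are linearly independent in the $\T\B$-plane (their Gram determinant is $(1+a)^2+b^2>0$). Therefore $\v$ must be a unit vector normal to the $\T\B$-plane, i.e. $\v = \pm\N$.

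The only remaining step — which I expect to be the only subtle point — is fixing the sign. This I would do by direct computation: $\r\times\N = (1+a)\B + b\T$, whereas $\r\times(-\N) = -(1+a)\B - b\T$, and only the latter equals $\K$. Hence $\v = -\N$, and since $\x'' = \kappa\N = (1+a)\N$ at this maximum point, we conclude $\v = -\x''/(1+a)$, as claimed.
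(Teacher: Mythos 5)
Your proof is correct, but it takes a genuinely different route from the paper's. You work directly in the Frenet--Serret frame: evaluate the constant vector $\p$ from \eqref{eq:pconst} at the maximum of $\kappa$, use \eqref{eq:geods2} to get $\tau-b=-b/(1+a)$, and thereby pin down $\p=-a\T+b\B$, $\r=(1+a)\T-b\B$, $\K=-b\T-(1+a)\B$; then you conclude $\v=\pm\N$ from the two orthogonality relations $\r\cdot\v=0$ and $\K\cdot\v=0$, and fix the sign by computing $\r\times(\pm\N)$ against $\K$. The paper instead avoids explicit Frenet coordinates: it introduces the scalars $F=\v\cdot\x'$, $G=\p\cdot\x'$, derives the ODE $G'=F(2G-|\p|^2-1)$ from the Hamiltonian system \eqref{eq:ham}, evaluates $G=-a$, $G'=0$ at the maximum to force $F=0$, and then reads off $\x''=\r'=-(1+a)\v$ directly from \eqref{eq:ham}. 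Your approach is more geometric (you actually locate $\p$, $\r$, $\K$ in the Frenet frame, which is informative in its own right); the paper's is shorter and bypasses the sign discussion entirely because it never needs to choose a normal direction. One small slip: the Gram determinant of $\r$ and $\K$ is $|\r|^2|\K|^2-(\r\cdot\K)^2=\bigl((1+a)^2+b^2\bigr)^2$, not $(1+a)^2+b^2$ (in fact $\r\cdot\K=0$, so they are orthogonal, which is even better than merely independent). This does not affect your conclusion.
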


\begin{proof}
Let  $F:=\v\cdot  \x',\ G:=\p\cdot \x'$. Using  equations \eqref{eq:ham} and $\r\cdot \v=0,$ 
one has
\be\label{eq:G}
G' = \p\cdot\r'=\p\cdot\left[(\v\cdot\p)\r-(\r\cdot\x')\v\right]=F \left( 2 G - |\p|^2 - 1\right).
\ee
Dotting equation \eqref{eq:pconst} with $\T=\x'$, we get  
\be\label{eq:2G}
2G = 1 + a^2 - \kappa^2,
\ee
whose derivative is 
\be\label{eq:Gp} G' = - \kappa\kappa'.
\ee
 We now calculate at a maximum point  of $\kappa$, where   $\kappa=1+a$ and $\kappa' = 0$. By equations  \eqref{eq:2G}-\eqref{eq:Gp}, 
  $G =-a$ and  $G'=0.$ By equation  \eqref{eq:G}, 
  $$0=F[-2a-(a^2+b^2)-1]=-F[(1+a)^2+b^2],$$ hence  $ F = \v\cdot\x'=0.$
Then by equations \eqref{eq:ham}, 
$$\v\cdot\p=\v\cdot(\x'-\r)=\v\cdot \x'-\v\cdot\r=0$$ and 
$$\r\cdot\x'=(\x'-\p)\cdot\x'=1-G=1+a,$$ so 
$$\x'' = \r' = (G- 1) \v=-(1+a)\v,$$ as needed. 
\end{proof}

\begin{rmrk}A  similar argument shows that $\v$ is alligned ($a > 1$) or anti-aligned ($a \le 1$ ) with  $\x''$ also at points of minimum curvature. These are the  points with $\kappa=|1-a|$, where $(a - 1)\v = \x''$, for $a\neq 1$ or $\kappa=-2$ for $a=1$, where $2\v = - \x''$. 

Another notable case is that of  an inflection point, where  $\kappa=0$, occurring  for $a=1$ half way between adjacent maxima and minima of $\kappa$. See Remark \ref{rmrk:FS}.
At such a point,  $\x''=0$ but $\x'''\neq 0$. The Frenet-Serret frame then extends  analytically to the inflection point via $\N=\pm \x'''/|\x'''|$, $\B=\T\times \N$, and  $\v$ is  aligned with $\pm\B$. \end{rmrk}

\subsection{Rescaling bicycling geodesics, with a torsion shift}

Kirchoff rods (solutions to equations \eqref{eq:rods1}-\eqref{eq:energy}) comprise -- up to  isometries -- a 4-parameter family of curves. The family is invariant under rescaling: if a Kirchoff rod $\x(t)$,  parametrized by arc-length, is scaled to $\tilde \x(t) := \lambda \x(t/\lambda)$, then the curvature and torsion scale by
 $$
 \tilde\kappa(t) = \frac{1}{\lambda}\kappa\left(\frac{t}{\lambda}\right),\ \quad \tilde\tau(t) = \frac{1}{\lambda}\tau\left(\frac{t}{\lambda}\right). 
 $$
From these formulas one can see  that $\tilde\x(t)$ is still a Kirchoff rod, satisfying equations \eqref{eq:rods1}-\eqref{eq:energy} with parameters:
\be\label{eq:scale}
 \tilde a_1 = \frac{a_1}{\lambda^2} ,\quad\tilde a_2 = \frac{a_2}{\lambda} ,\quad\tilde a_3 = \frac{a_3}{\lambda^3}, \quad\tilde a_4 = \frac{a_4}{\lambda^2}.
\ee
 
 So -- up to  similarities -- the Kirchoff rods define  a 3-parameter family of curves, i.e., a 3-parameter family of  {\em shapes}. The front tracks of  bicycling geodesics form a 2-parameter subfamily of Kirchoff rods (Theorem \ref{thm:main}(b)).

We consider how a bicycling geodesic (of a fixed frame length) might be rescaled by $\lambda>0$, $\lambda\neq 1$ (rescaling by $\lambda=-1$ is  realized by $(a,b)\mapsto (a,-b)$, see Lemma \ref{prop:ref}). 

We know that the planar front tracks ($b=0$), apart from the circle ($a=0$), line, and Euler's soliton ($a=1,\ b=0$), come in two  `sizes', `wide' and `narrow',   related by $a\mapsto 1/a$,  with the scaling factor $\lambda=a$, see \cite[\S4]{2D}.  For the spatial geodesics this is not the case. 

\begin{prop}
 A non-planar bicycling geodesic front track (with a fixed frame length) may not be rescaled.
\end{prop}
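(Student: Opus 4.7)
I will exploit the fact that within the 4-parameter family of Kirchhoff rods, the bicycling geodesic front tracks form a 2-parameter subfamily cut out by two explicit algebraic relations among the Kirchhoff parameters $(a_1,a_2,a_3,a_4)$, then check directly when those two relations are preserved under the scaling \eqref{eq:scale}.

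\textbf{Step 1: Find the defining equations of the subfamily.} From \eqref{eq:ab}, a Kirchhoff rod is a bicycling geodesic front track iff there exist $a,b\in\R$ with $a_1=(1+a^2)/2$, $a_2=b$, $a_3=b(a^2-1)$ and $a_4=\sqrt{a^2+b^2}$. Solving $a^2 = 2a_1-1$ and substituting gives the two relations
\begin{equation}\label{eq:biCondition}
a_3 = 2a_2(a_1-1), \qquad a_4^2 = (2a_1-1) + a_2^2,
\end{equation}
together with $a_1\ge 1/2$. Conversely, $(a_1,a_2,a_3,a_4)$ satisfying \eqref{eq:biCondition} arises from $(a,b)=(\sqrt{2a_1-1},\,a_2)$.

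\textbf{Step 2: Impose scale invariance.} Under the rescaling $\tilde \x(t)=\lambda\x(t/\lambda)$ of \eqref{eq:scale}, the rescaled curve is a bicycling geodesic front track (of frame length $1$) iff $(\tilde a_1,\tilde a_2,\tilde a_3,\tilde a_4)=(a_1/\lambda^2,\,a_2/\lambda,\,a_3/\lambda^3,\,a_4/\lambda^2)$ satisfy both relations of \eqref{eq:biCondition}. Plugging into the first relation yields
\[
\frac{a_3}{\lambda^3} = 2\,\frac{a_2}{\lambda}\!\left(\frac{a_1}{\lambda^2}-1\right),
\]
which, after multiplying by $\lambda^3$ and subtracting the original relation $a_3 = 2a_2(a_1-1)$, reduces to the clean constraint
\[
2a_2(\lambda^2-1) = 0.
\]

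\textbf{Step 3: Conclude.} For a non-planar front track, Proposition \ref{prop:planar}(a) forces $b=a_2\ne 0$, so the above constraint gives $\lambda^2=1$, i.e., $\lambda=\pm 1$. The value $\lambda=-1$ corresponds to the orientation-reversing rescaling, realized geometrically by the reflection of Lemma \ref{prop:ref} and hence not a genuine rescaling, and $\lambda=1$ is the identity. (For completeness one can then check that the second relation of \eqref{eq:biCondition} is automatically satisfied at $\lambda=\pm 1$, and that in the planar case $a_2=0$ the second relation instead admits the extra solution $\lambda^2=a^2$, recovering the `wide/narrow' duality of \cite[\S4]{2D}.)

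\textbf{Expected main obstacle.} There is no real obstacle; once the two defining equations \eqref{eq:biCondition} of the subfamily are written down, the argument is an elementary scaling check. The only subtle point is interpreting $\lambda=-1$: it is an allowed solution of $\lambda^2=1$ but corresponds to an ambient isometry rather than a similarity, and so does not produce a new shape.
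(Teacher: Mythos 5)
Your proof is correct and takes essentially the same approach as the paper's: both derive the relation $a_3 = 2a_2(a_1-1)$ from \eqref{eq:ab} and observe that it is incompatible with the scaling law \eqref{eq:scale} unless $\lambda^2=1$, given $a_2=b\ne 0$ for non-planar front tracks. You are slightly more thorough in also recording the second defining relation $a_4^2=(2a_1-1)+a_2^2$ and noting that in the planar case it recovers the wide/narrow duality, but this is not needed for the conclusion.
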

 \begin{proof} 
By equations \eqref{eq:ab}, the \Kh parameters of  a geodesic front track  satisfy 
$a_3=2a_2(a_1-1).$ 
After rescaling  by $\lambda$ this   becomes  $\lambda^{3}\tilde a_3 = 2\lambda \tilde a_2 ( \lambda^2\tilde a_1 - 1),$ or $\tilde a_3 = 2 \tilde a_2 ( \tilde a_1 - \lambda^{-2})$. For non-planar geodesics, $a_2 = b$ and $\tilde a_2$ are non-zero, so that only for $\lambda^{-2} = 1$ is the rescaled front track as well a bicycling geodesic.
\end{proof}

Nevertheless, the involution $a\mapsto 1/a$ for planar geodesics ($b=0$) can be extended to non-planar geodesics $b>0$, provided   one acts on space curves, in addition to rescaling, by a `torsion shift'. (We are indebted to David Singer for suggesting  this idea.) Here are the details. 


Let  $\kappa(t), \tau(t)$ be the curvature and torsion functions of a non-circular geodesic front track $\x(t)$ 
in $\R^3$, parametrized by arc length, satisfying equations  \eqref{eq:geods1}-\eqref{eq:geods3} with  parameter values $a,b\in\R$, $a> 0$.  Let us consider the new parameters values   
$$\tilde a:={1\over a}, \quad \tilde b:={b\over a},
$$
 and let $\tilde\kappa, \tilde\tau$  be the  curvature and torsion functions of the  associated new front track. 
\begin{prop}\label{prop:inv}  One has 
$$\tilde\kappa(t)={1\over a}\kappa\left({t\over a}\right),\quad 
\tilde\tau(t)={1\over a}\tau\left({t\over a}\right)-{b\over a}.
$$
 Namely, the new front track  is obtained from the old one  by torsion shifting,  $\tau\mapsto \tau-b$,  followed by rescaling by $\lambda=a$. 
 \end{prop}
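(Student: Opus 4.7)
The plan is to prove the proposition by direct verification that the transformed functions $\tilde\kappa, \tilde\tau$ satisfy equations \eqref{eq:geods1}--\eqref{eq:geods3} with the new parameters $(\tilde a, \tilde b) = (1/a, b/a)$. Since these equations characterize non-linear geodesic front tracks up to rigid motion, and since every admissible pair $(\tilde a, \tilde b)$ is realized as a geodesic by the lemma preceding Proposition \ref{prop:range}, matching curvature and torsion identifies the transformed curve with a geodesic front track of the new parameters.

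First I would record the transformation laws. Writing $s := t/a$, the rescaling gives $\tilde\kappa(t) = \kappa(s)/a$ and hence $\tilde\kappa'(t) = \kappa'(s)/a^2$, so $(\tilde\kappa')^2 = (\kappa')^2/a^4$. Likewise one computes $1 + \tilde a^2 - \tilde\kappa^2 = (1 + a^2 - \kappa^2)/a^2$. The formula for $\tilde\tau$ given in the proposition, combined with $\tilde b = b/a$, lets one express $\tilde\tau - \tilde b$ and $2\tilde\tau - \tilde b$ as explicit $1/a$-multiples of expressions in $\tau, b$.

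Next I would substitute these into each of equations \eqref{eq:geods1}--\eqref{eq:geods3}. For the algebraic relation \eqref{eq:geods2}, after clearing the overall factor of $1/a^3$, the identity $\tilde\kappa^2(2\tilde\tau - \tilde b) = \tilde b(\tilde a^2 - 1)$ collapses to an expression in $\kappa, \tau, a, b$ that is equivalent to the original \eqref{eq:geods2}; the precise value of the torsion-shift constant is exactly what is forced by this requirement. For the energy relation \eqref{eq:geods3}, after clearing the factor $1/a^4$, the terms $(\tilde\kappa')^2$ and $\tfrac14(1 + \tilde a^2 - \tilde\kappa^2)^2$ match their counterparts in the original \eqref{eq:geods3} directly, while the term $\tilde\kappa^2(\tilde\tau - \tilde b)^2$ produces a discrepancy that, after expansion, equals a multiple of $\kappa^2(2\tau - b)$; applying the original \eqref{eq:geods2} to this discrepancy converts it into $b^2(a^2-1)$, which is precisely the difference between $a^2(1+b^2)$ and $a^2 + b^2$, so both sides of the transformed \eqref{eq:geods3} agree.

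Finally, equation \eqref{eq:geods1} is automatic from \eqref{eq:geods2} and \eqref{eq:geods3} whenever $\kappa' \not\equiv 0$, as observed in the remark following \eqref{eq:energy}; the constant-curvature case is the trivial case of a circular front track, excluded by hypothesis. The proof has no conceptual obstacle — it is a verification by substitution. The one item requiring care is bookkeeping of the powers of $1/a$ coming from the combined rescaling and torsion shift, and recognizing that \eqref{eq:geods2} is the single identity that makes the cross term in \eqref{eq:geods3} cancel correctly.
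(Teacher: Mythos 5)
Your plan — verify by direct substitution that the transformed functions satisfy equations \eqref{eq:geods1}--\eqref{eq:geods3} with the new parameters $(\tilde a,\tilde b)$ — is exactly the paper's proof, which consists of the single sentence ``The proof is by direct substitution in equations \eqref{eq:geods1}--\eqref{eq:geods3}.'' Your outline is strategically sound: equation \eqref{eq:geods2} pins down the admissible shift, equation \eqref{eq:geods3} is verified by using \eqref{eq:geods2} to convert the cross term $\kappa^2\bigl(\tilde\tau-\tilde b\bigr)^2$ into a constant, and \eqref{eq:geods1} is automatic from the other two for nonconstant $\kappa$ by the Remark following \eqref{eq:energy}.

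One caution if you carry out the algebra: with the signs exactly as displayed in the proposition, namely $\tilde b=b/a$ together with $\tilde\tau(t)=\tfrac{1}{a}\tau(t/a)-\tfrac{b}{a}$, equation \eqref{eq:geods2} does \emph{not} close — after clearing $1/a^3$ one is left with $\kappa^2(2\tau-3b)=b(1-a^2)$, which together with the original \eqref{eq:geods2} would force $\kappa^2\equiv a^2-1$, false in general. The torsion shift $\tau\mapsto\tau-b$ followed by rescaling by $a$ in fact produces a geodesic with parameters $(1/a,\,-b/a)$; to land at $\tilde b=+b/a$ one should take instead $\tilde\tau(t)=\tfrac{1}{a}\bigl(b-\tau(t/a)\bigr)$. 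These two descriptions are identified under the reflection symmetry $(\kappa,\tau,a,b)\mapsto(\kappa,-\tau,a,-b)$ of Lemma \ref{prop:ref}, so the proposition is correct up to a sign, but a blind substitution using the printed formula for $\tilde\tau$ will not verify, and you should resolve this before asserting that ``\eqref{eq:geods2} collapses to an expression equivalent to the original.''
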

The proof is by direct substitution in equations \eqref{eq:geods1}-\eqref{eq:geods3}.

\subsection{Monodromy}

 Consider a  geodesic     front track $\x(t)$, parametrized by arc length, with a periodic curvature function $\kappa(t)$. By Proposition \ref{prop:range}, this occurs for all geodesic front tracks,  except lines, circles, and Euler's solitons. Furthermore, denoting the period of $\kappa^2$ by $T$,  $\kappa$ is $T$-periodic for $a\neq 1$ and $T$-antiperiodic for $a=1$:
$$\kappa(t+T)=\kappa(t) \mbox{ for } a\neq 1, \quad \kappa(t+T)=-\kappa(t) \mbox{ for } a=1.
$$
By equation \eqref{eq:geods2}, the torsion   is then either $T$-periodic for $a\neq 1$, or constant for $a=1$. In both cases,  
$$ \tau(t + T) = \tau(t), \ \mbox{for all $t$.} 
$$
The  next proposition follows from the above (anti-)periodicity of $\kappa, \tau$ and the `fundamental theorem of space curves', except for a small twist in the antiperiodic case.   
\begin{prop}[and definition of Monodromy] \label{prop:mono} Given a  bicycle geodesic $(\x(t), \y(t))$ 
whose front track's curvature is $T$-periodic or antiperiodic, 
there is a unique  proper rigid motion  $M:\R^3\to\R^3$ (an orientation preserving  isometry), called the {\em monodromy} of $\x(t)$, such that 
\[   \x(t + T) = M(\x(t)),\ \y(t + T) = M(\y(t)), \ \mbox{for all $t\in\R$.} \]
\end{prop}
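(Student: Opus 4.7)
The strategy is to first produce $M$ as a rigid motion relating $\x(\cdot)$ to $\x(\cdot+T)$ via the fundamental theorem of space curves, and then extend this action to $\y$ by exploiting the no-skid ODE together with Proposition~\ref{prop:bt}. In the periodic case ($a\ne 1$), both $\kappa$ and $\tau$ are $T$-periodic, so the unit-speed curves $\x(s)$ and $\tilde{\x}(s):=\x(s+T)$ carry identical Frenet--Serret data. The fundamental theorem of space curves produces a unique proper rigid motion $M$ with $M\x(s)=\x(s+T)$. In the antiperiodic case ($a=1$), $\tilde\kappa(s)=\kappa(s+T)=-\kappa(s)$ while $\tau$ is constant; here I invoke the sign ambiguity flagged in Remark~\ref{rmrk:FS}, using the alternate Frenet frame $(\T,-\N,-\B,-\kappa,\tau)$ on $\x$ so that its data again match those of $\tilde\x$ in its canonical frame. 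The fundamental theorem once more yields a unique proper $M$.

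To extend $M$ to the back track, write $M(p)=Rp+w$ with $R\in SO(3)$. Differentiating $M\x(t)=\x(t+T)$ once and twice gives $R\x'(t)=\x'(t+T)$ and $R\x''(t)=\x''(t+T)$. Pick $t_0$ where $\kappa(t_0)=1+a$. By Proposition~\ref{prop:bt}, $\v(t_0)=-\x''(t_0)/(1+a)$. In the periodic case $\kappa(t_0+T)=1+a$ so the same formula gives $\v(t_0+T)=-\x''(t_0+T)/(1+a)=R\v(t_0)$; in the antiperiodic case $\kappa(t_0+T)=-(1+a)=-2$, and the remark after Proposition~\ref{prop:bt} supplies $\v(t_0+T)=-\x''(t_0+T)/2=R\v(t_0)$ as well. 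Now set $\hat\v(t):=R^{-1}\v(t+T)$. Using $\v'=\x'-(\x'\cdot\v)\v$ (Lemma~\ref{lemma:distr}) and the fact that $R$ preserves inner products, a direct computation shows $\hat\v$ satisfies the very same no-skid ODE driven by $\x'$. Since $\hat\v(t_0)=\v(t_0)$, ODE uniqueness gives $\hat\v\equiv\v$, i.e.\ $\v(t+T)=R\v(t)$. Therefore
$$\y(t+T)=\x(t+T)-\v(t+T)=M(\x(t))-R\v(t)=M(\y(t)).$$
Uniqueness of $M$ is immediate: the non-linear, non-circular front track spans an affine subspace of $\R^3$ on which any proper isometry fixing the curve pointwise must be trivial.

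The main obstacle is the careful bookkeeping in the antiperiodic regime $a=1$: the front track crosses inflection points where the sign of $\kappa$ flips, forcing one to choose the ``right'' Frenet--Serret frame before applying the fundamental theorem, and the identification of $\v$ via $\x''$ must be checked at a point where $\kappa$ reaches $-(1+a)$ rather than $+(1+a)$. Once the sign conventions of Remark~\ref{rmrk:FS} and the remark following Proposition~\ref{prop:bt} are aligned, everything collapses to a single formula $\v=-\x''/(1+a)$ holding at both endpoints, and the argument proceeds uniformly.
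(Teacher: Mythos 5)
Your proof is correct and follows the same strategy as the paper: apply the fundamental theorem of space curves (with the sign-flip from Remark \ref{rmrk:FS} handling the antiperiodic case $a=1$), then extend the isometry to the back track via Proposition \ref{prop:bt}. You merely unpack the back-track step more explicitly, verifying $\v(t_0+T)=R\v(t_0)$ at a curvature extremum and then invoking ODE uniqueness for the no-skid equation, where the paper states tersely that the non-linear back track is determined by the front track.
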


\bp Let $\tilde\x(t)=\x(t+T)$. If $a\neq 1$ then $\x(t), \tilde\x(t)$ have no inflection points, with the  same  curvature and  torsion functions. By the `fundamental theorem of space curves' \cite[\S21]{G} (we review it in the next paragraph), there is an orientation preserving isometry $M$ such that $\tilde\x(t)=M(\x(t))$ for all $t$. 

The uniqueness   follows from the non-linearity of $\x(t)$. The non-linearity of $\x(t)$  also implies that   $\y(t)$  is determined by $\x(t)$  (Proposition \ref{prop:bt}), hence $\x(t + T) = M(\x(t))$ implies that $\y(t + T) = M(\y(t)). $

For $a= 1$, when $\kappa$ is $T$-antiperiodic, we need to generalize  slightly  the `fundamental theorem of space curves'. Let us revise first the standard statement and proof of this theorem. 

One is given two curves in $\R^3$, $\x(t)$ and $\tilde\x(t)$, 
both parametrized by arc length, without inflection points (i.e., with non-vanishing acceleration),  
with Frenet-Serret frames satisfying the Frenet-Serret equations \eqref{eq:FS}, 
with the same curvature and torsion functions. The statement is then  that there is an orientation preserving isometry $I:\R^3\to\R^3$ such that $\tilde\x(t)=I\x(t)$ for all $t$. 

To prove it, one takes the isometry $I$ that maps $\x(0)$ to $\tilde\x(0)$ and the Frenet-Serret frame of the first curve at 
$t=0$ to that of the second curve at $t=0$. Then, since the Frenet-Serret equations are invariant under isometries, one gets, by the uniqueness theorem  of solutions to ODEs, that $I$ must take the whole Frenet-Serret frame of the first curve to that of the second. In particular, $I_*\x'(t)=\tilde\x'(t)$, which implies  $I\x(t)=\tilde\x(t)$ since $I\x(0)=\tilde\x(0)$. 

Now we observe that in this argument neither  the uniqueness of the Frenet-Serret frame was used, nor any assumption about the  curvature function (except smoothness). We can thus apply it in our case of $a=1$ by fixing a Frenet-Serret frame $\T(t), \N(t), \B(t)$ along $\x(t)$, with the corresponding curvature and torsion functions $\kappa(t), \tau(t)$ (there are two choices of the frame, we pick one of them). Along $\tilde\x(t)=\x(t+T)$, we pick  the other  choice: 
$$\tilde\T(t)=\T(t+T), \ \tilde\N(t)=-\N(t+T), \ \tilde\B(t)=-\B(t+T).$$
  Then we can check that  this frame satisfies the Frenet-Serret equations with curvature and torsion functions $\tilde\kappa(t)=-\kappa(t+T)=\kappa(t), \tilde\tau(t)=\tau(t+T)=\tau(t)$, so there is an isometry $M$ mapping $\x(t)$ to $\tilde\x(t)$, as needed. 
\ep

Next recall that every proper rigid motion  in $\R^3$ is a `screw motion',  the composition of translation and rotation about a line, the rotation axis of the motion  (the  {\em Chasles Theorem}). We shall now find the rotation axis of the monodromy of Proposition \ref{prop:mono}.

By Theorem \ref{thm:main}(c), a geodesic front   track  is the trajectory  of a charged particle in a magnetic field 
$\K$, a Killing field  generating a screw motion  about the line passing  through $\x_1$ and parallel to $\p$, the  rotation axis of $\K$. 

If the front track is planar then, by Proposition \ref{prop:bt}, the geodesic is planar, and this is the case studied in \cite{2D}. Therefore we consider non-planar front tracks in what follows.

\begin{prop}\label{prop:screw}
The monodromy $M$  of a non-planar geodesic front track with periodic curvature is a screw motion with  axis  parallel to  the axis of the associated  magnetic field  $\K$. If the rotation part of $M$ is non-trivial then its axis coincides with that of  $\K$. 


\end{prop}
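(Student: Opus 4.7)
The plan is to exhibit $M$ as a screw motion about the line through $\x_1$ parallel to $\p$, by showing in turn that (i) the rotation part of $M$ fixes $\p$, and (ii) $M$ carries $\x_1$ to another point on the axis of $\K$.

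For (i), decompose $M$ into a rotation $L\in SO(3)$ and a translation $\t$, and exploit the conservation law \eqref{eq:pconst}, which expresses
\[\p = \frac{1+a^2-\kappa^2}{2}\T - \kappa'\N - \kappa(\tau-b)\B\]
in the Frenet--Serret frame of $\x$. From the construction in the proof of Proposition \ref{prop:mono}, $M$ carries the chosen Frenet frame at time $t$ to the chosen frame at time $t+T$: for $a\neq 1$ these are both the continuous extensions, the scalar coefficients above are $T$-periodic, and $L\p=\p$ is immediate; for $a=1$ the chosen frame at $t+T$ flips by $(\N,\B)\mapsto(-\N,-\B)$, while the antiperiodicity of $\kappa$ together with $\tau\equiv b/2$ flips the signs of $\kappa'$ and of $\kappa(\tau-b)$ in exactly the way needed to compensate. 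Either way $L\p=\p$, so $L$ is either trivial or a rotation about an axis parallel to $\p$.

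For (ii), note that the magnetic field $\K$ is intrinsic to the trajectory, so reapplying the formula $\x_1=\y_0+((\x'_0\times\v_0)\times\p)/|\p|^2$ from Theorem \ref{thm:main}(c) with initial time shifted to $T$ must yield another point $\tilde\x_1$ on the same axis of $\K$, i.e.\ one differing from $\x_1$ by a multiple of $\p$. Substituting $\y(T)=M\y_0$, $\x'(T)=L\x'_0$, $\v(T)=L\v_0$, and using $L(\u\times\v)=L\u\times L\v$ (since $L$ is orientation preserving) together with $L\p=\p$, a short calculation collapses to $\tilde\x_1 = L\x_1+\t = M\x_1$. Hence $M\x_1-\x_1\in\R\p$.

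Combining the two ingredients finishes the proof: if $L=I$ then $M$ is a pure translation by $M\x_1-\x_1\in\R\p$, which is parallel to the axis of $\K$; if $L\neq I$, the screw axis of $M$ is the unique line $\{\y:M\y-\y\parallel\p\}$, and since it is parallel to $\p$ and contains $\x_1$, it coincides with the axis of $\K$. The one step that will need care is the sign bookkeeping at $a=1$; I expect it to reduce to the remarks in Remark \ref{rmrk:FS} and the Frenet frame conventions fixed in the proof of Proposition \ref{prop:mono}.
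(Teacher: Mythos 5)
Your proof is correct, and the route to locating the axis (step (ii)) is genuinely different from the paper's. The paper first shows the translation part of $M$ is non-trivial by integrating equation~\eqref{eq:pv} over a period and invoking non-circularity, and then, for the non-trivial rotation case, shows the distance $r(t)$ from $\x(t)$ to the axis of $\K$ is $T$-periodic (equation~\eqref{eq:r}), projects the front track onto the plane $\p^\perp$, and argues via a circumcenter (with a separate case for rotation angle $\pi$) that the projected rotation center is the projection of the axis of $\K$. Your argument instead reapplies the explicit formula for $\x_1$ from Theorem~\ref{thm:main}(c) with initial time shifted to $T$, uses the relations $\y(T)=M\y_0$, $\x'(T)=L\x'_0$, $\v(T)=L\v_0$ together with $L\p=\p$ and equivariance of the cross product, and collapses the expression to $\tilde\x_1 = M\x_1$; since $\tilde\x_1$ lies on the axis of $\K$ (because $\K-\x\times\p$ is constant by equation~\eqref{eq:magDer}, so the reconstructed field is time-shift independent), this gives $M\x_1-\x_1\in\R\p$ directly. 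Your approach is cleaner here: it replaces the projection-and-circumcenter argument and its case analysis by a short algebraic computation. Your step (i) is essentially what the paper uses implicitly (the $T$-periodicity of $\p\cdot\x'$ and $\p\cdot\v$ plus non-planarity forces $L\p=\p$); your explicit derivation from \eqref{eq:pconst}, including the sign bookkeeping at $a=1$ against the flipped Frenet frame in the proof of Proposition~\ref{prop:mono}, is sound. The one item in the paper's proof you omit is the non-triviality of the translation part of $M$, but that fact is not needed for the proposition as stated.
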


\begin{proof}
%
We first show that the translation part of $M$ is non-trivial. To this end, we will show that $\p\cdot \x(t)$ is unbounded. 
 
 Using  equations \eqref{eq:G}-\eqref{eq:2G}, one has 
$$ \p\cdot\x'=\frac{1+a^2-\kappa^2}{2}, \quad  \p\cdot\v=-{\p\cdot\x''\over \kappa^2+b^2}.
$$
It follows that  $\p\cdot\x'$ and $ \p\cdot\v$ are $T$-periodic. 
Next,  integrating   equation \eqref{eq:pv},
$$(\p\cdot \x)'=(\p\cdot\v)'+(\p\cdot \v)^2,
$$
over $[0,T]$, and using the periodicity of $\p\cdot\v$, we get that 
$$\int_0^{T}\p\cdot \x'=\int_0^T(\p\cdot\v)^2>0,
$$
 unless $\p\cdot\v=0$ over the whole period. This would imply, by equation \eqref{eq:Gp}, that $\kappa'=0$, i.e., the front track is a circle, which has been excluded.  It follows that 
 $$\p\cdot\x(nT)=\p\cdot\x_0+n\int_0^{T}\p\cdot \x'$$
  is unbounded,  hence  the translation part of $M$ is non-trivial, along an axis parallel to the axis of $\K$. If the rotational part of $M$ is trivial, then $M$ is a pure translation along this axis of $\K$. 
  
    Assume now that the rotational part of $M$ is non-trivial. Let $r(t)$ be the distance from $\x(t)$ to the rotation axis of $\K$. From equations \eqref{eq:rel} and \eqref{eq:mag_geod} one has 
 $$-b\T-\kappa\B=(\x-\x_1)\times \p+\delta\p,
 $$
  and taking the square norm of both sides gives 
 \be\label{eq:r}b^2+\kappa^2=r^2|\p|^2+\frac{b^2}{|\p|^2}.
 \ee
Hence $r(t)$ is $T$-periodic. (In fact, this is a general property of  Killing magnetic field  trajectories, see \cite[equation (3.2)]{Mo}.) 
  

Let us project the front track $\x(t)$  onto the plane orthogonal to the axis of $\K$, and assume that this axis projects to the origin $O$. We obtain a planar curve $\bar \x(t)$,  invariant under a rotation $\bar M$,  and we need to show that the center of this rotation, say $F$, is the origin.

If the rotation angle is not equal to $\pi$ then pick a point $\u=\bar\x(t_0)$, not equal to $F$ (such a point exists else $\x(t)$ is a linear track), and consider the three non-collinear  points $\u, \bar M(\u), \bar M^2(\u)$. These points are at equal distances from $F$ and, by the $T$-periodicity of $r(t)$ (see equation \eqref{eq:r}), at equal distances from $O$. Hence $F=O$ is the circumcenter of the triangle with vertices $\u, \bar M(\u), \bar M^2(\u)$.
  
  If the rotation angle equals $\pi$ then  for each point $ \bar\x(t)$ the points $ \bar\x(t)$ and $\bar M( \bar\x(t))$ are symmetric with respect to $F$ and are at equal distances from $O$. 
  If $F\neq O$ then it follows that  the whole curve $\bar\x(t)$ lies on the line orthogonal to $FO$ and passing through   $F$, hence $\x(t)$ is planar, contradicting our original assumption (the planar geodesics have already been described in \cite{2D}, having purely translational monodromy).  
  \end{proof}



\subsection{Bicycle correspondence}
 The bicycling configuration space $Q=\R^n\times S^{n-1}$ is equipped with a \sR\ structure whose geodesics are the bicycling geodesics considered in this article. 

Identifying $Q$ with the tangent unit sphere bundle on $\R^n$, the   Euclidean group acts naturally on $Q$, preserving this \sR\ structure, hence it  acts also on the space of \sR\ geodesics on $Q$. Theorem \ref{thm:main} describes the geodesics up to this action. 

Now there is an additional \sR\ isometry,   $\Phi:Q\to Q$, an involution, not coming from the said Euclidean group action, called {\em bicycling correspondence}   (a.k.a. the Darboux-B\"acklund transformation of the filament  equations \cite{RS,Ta}). It is defined 
by  `flipping the bike about its back wheel': 
$$\Phi: (\x,\v)\mapsto (\x-2\v, -\v).$$
Thus, when acting by $\Phi$ on a bicycle path, the back track is unchanged, while the front track is `flipped'. See Figure \ref{fig:flip}. 

 \begin{figure}[h]\centering
\def\svgwidth{.35\textwidth}\import{figures/}{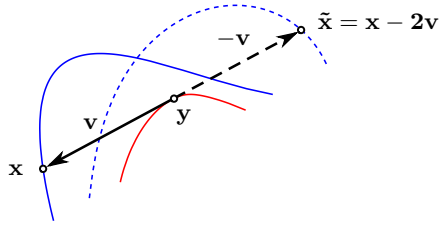}
\caption{Bicycle correspondence. The back track (red) is unchanged and the front tracked is `flipped' (from solid blue to dashed blue). } \label{fig:flip}
\end{figure}

One can verify  that $\Phi$  is a \sR\ isometry, i.e., it preserves the horizontal distribution  $D\subset TQ$ and the  \sR\ metric on it (see Lemma \ref{lemma:sR} below), hence it acts on the space of bicycle geodesics. 

For $n=2$ this action was studied in \cite{2D} (see Proposition 4.11 and Figure 8). It was found that, with one notable exception, $\Phi$ acts on the front tracks of geodesics by translations and reflections, i.e., by Euclidean isometries. The notable exception is a bicycle geodesic with linear front track and non-linear back track (a tractrix), which $\Phi$ transforms into a bicycle geodesic whose front track is the Euler soliton (and vice versa). 

Here we study  this action for $n=3$. What we find is that, with the same exception as for $n=2$, the bicycle correspondence transforms the front tracks of bicycle geodesics by a rigid motion $I\in{\rm Iso}(\R^3)$, a `square root of the monodromy':   $I^2=M. $ 


To begin with, let us verify the claim made above.

\begin{lemma}\label{lemma:sR} 
$\Phi:Q\to Q$ is a \sR\ isometry. 
\end{lemma}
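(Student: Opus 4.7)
The plan is a direct computation on tangent vectors. Write $(\tilde\x,\tilde\v):=\Phi(\x,\v)=(\x-2\v,-\v)$, so the derivative of $\Phi$ sends $(\x',\v')\in T_{(\x,\v)}Q$ to
\[
(\tilde\x',\tilde\v')=(\x'-2\v',\,-\v').
\]
First I would check that $\Phi$ preserves the distribution $\D$. By Lemma \ref{lemma:distr}, $(\x',\v')\in\D$ means $\v'=\x'-(\x'\cdot\v)\v$; in particular $\v'\cdot\v=0$, so $\Phi_*$ maps $T_{(\x,\v)}Q$ into $T_{\Phi(\x,\v)}Q$. To verify the no-skid condition for $(\tilde\x',\tilde\v')$, I would compute
\[
\tilde\x'-(\tilde\x'\cdot\tilde\v)\tilde\v=(\x'-2\v')-\bigl((\x'-2\v')\cdot(-\v)\bigr)(-\v)=\x'-2\v'-(\x'\cdot\v)\v,
\]
using $\v'\cdot\v=0$. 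Substituting $\v'=\x'-(\x'\cdot\v)\v$ into the right-hand side collapses it to $-\v'=\tilde\v'$, as required.

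Next I would verify that the \sR\ metric is preserved, i.e.\ $|\tilde\x'|=|\x'|$. The relation $\v'=\x'-(\x'\cdot\v)\v$ gives
\[
\v'\cdot\x'=|\x'|^{2}-(\x'\cdot\v)^{2}=|\v'|^{2}.
\]
Therefore
\[
|\tilde\x'|^{2}=|\x'-2\v'|^{2}=|\x'|^{2}-4\v'\cdot\x'+4|\v'|^{2}=|\x'|^{2},
\]
which finishes the argument.

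There is no real obstacle here; the only thing to be careful about is using the two orthogonality relations that come with belonging to $\D$, namely $\v'\cdot\v=0$ and $\v'\cdot\x'=|\v'|^{2}$. Finally, since $\Phi^{2}=\mathrm{id}$ on $Q$, these two computations also show that $\Phi^{-1}$ preserves $\D$ and $g$, so $\Phi$ is an involutive \sR\ isometry.
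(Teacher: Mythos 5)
Your proof is correct and follows essentially the same approach as the paper: a direct computation verifying first that $\Phi_*$ preserves the distribution $\D$ via the no-skid relation, and then that it preserves the metric $|\x'|^2$. The only cosmetic difference is that you isolate the identity $\v'\cdot\x'=|\v'|^2$ as a named intermediate step, while the paper folds the equivalent cancellation directly into the expansion of $|\x'-2\v'|^2$.
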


\begin{proof}  We first show that $\D$ is $\Phi$-invariant. Let $(\x,\v)\in Q$ and $ (\x',\v')\in D_{(\x,\v)}.$  
That is,    
$$|\v|=1, \ \v'=\x'-(\x'\cdot \v)\v.
$$
Then 
$\Phi(\x,\v)=(\tilde\x, \tilde\v),\ \Phi_*(\x',\v')=(\tilde\x', \tilde\v'),$
where 
 $$\tilde\x=\x-2\v, \ \tilde\v=-\v,\ \tilde\x'=\x'-2\v', \ \tilde\v'=-\v'.
 $$ 
One has then 
\begin{align*}
\tilde\v'-\left[ \tilde\x'-(\tilde \x'\cdot \tilde\v)\tilde\v\right]
&=-\v'-(\x'-2\v')+\left[(\x'-2\v')\cdot\v\right]\v\\
&=\v'-\left[\x'-(\x'\cdot\v)\v\right]=0,
\end{align*}
hence  $(\tilde\x', \tilde\v')\in D_{(\tilde\x, \tilde\v)}$. 

Next 
\begin{align*}
|\tilde\x'|^2&=|\x'-2\v'|^2=|\x'|^2+4\v'\cdot(\v'-\x')\\
&=|\x'|^2+4[\x'-(\x'\cdot\v)\v]\cdot[(\x'\cdot\v)\v]=|\x'|^2,
\end{align*}
hence $\Phi$ is a \sR\ isometry.
 \end{proof}

Now consider   a bicycle geodesic  $(\x(t),\v(t))$ in $Q$ whose front track's curvature is $T$-periodic or anti-periodic, and with monodromy $M$, as in Proposition  \ref{prop:mono}. 
Let 
$$(\tilde\x(t),\tilde\v(t))=\Phi(\x(t),\v(t)),\ {\rm i.e.,}\ \ 
\tilde\x(t)=\x(t)-2\v(t), \ \tilde\v(t)=-\v(t).$$ 

We assume that $\x$ is not planar. 

\begin{prop}\label{prop:bc}There is a screw motion  $I$, with the same  axis as  $M$, such that 
\begin{enumerate}[{\rm (a)}]
%
%
\item $\tilde\x(t+T/2)=I(\x(t)),\ \y(t+T/2)=I(\y(t))$ for all $t$. 
\item 
$I^2=M.$ 
\end{enumerate}
\end{prop}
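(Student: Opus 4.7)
The plan is to show that $\Phi$ acts on the non-planar geodesic $(\x,\v)$ as a time shift by $T/2$ composed with a rigid motion, which gives existence of $I$; the identity $I^2=M$ will then emerge from $\Phi^2=\mathrm{id}$ via a direct computation.

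First, verify that $\tilde\gamma := \Phi(\gamma)$ is a bicycle geodesic with the same parameters $(a,b)$ as $\gamma$. Since $\Phi$ is an sR isometry (Lemma \ref{lemma:sR}), $\tilde\gamma$ is a geodesic. The Hamiltonian equations \eqref{eq:ham} give $\tilde\x' = -\x' + 2(\v\cdot\p)\v$, hence $\tilde\v\times\tilde\x' = \v\times\x'$. Projecting the cotangent lift of $\Phi$ to $T^*Q$ (so as to preserve $H$) yields $\tilde\p = \p$, whence $\tilde b = \p\cdot(\tilde\v\times\tilde\x') = b$ and $\tilde a = a$ via $\tilde a^2+\tilde b^2 = |\tilde\p|^2 = a^2+b^2$.

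Next, identify the time shift as $T/2$. At a maximum $t_0$ of $\kappa$, where $F := \x'\cdot\v = 0$ (proof of Proposition \ref{prop:bt}) and $G := \x'\cdot\p = -a$ (from \eqref{eq:2G} at $\kappa = 1+a$), direct computation using $\tilde\x'' = -\x'' + 2F\x' + 2G\v$ (which follows by differentiating $\tilde\x' = -\x' + 2F\v$ and applying the Hamiltonian equations) gives $\tilde\x''(t_0) = (a-1)\N(t_0)$. Thus $\tilde\kappa(t_0) = |a-1|$, the minimum value of $\kappa$, attained at $t_0+T/2$. Since $\tilde\kappa'(t_0)=0$ (as $|a-1|$ is a root of $P$ in the oscillation $(u')^2 = P(u)$), and $\tilde\kappa,\kappa$ satisfy the same ODE \eqref{eq:geods1}-\eqref{eq:geods3}, uniqueness forces $\tilde\kappa(t) = \kappa(t+T/2)$; $\tilde\tau(t) = \tau(t+T/2)$ then follows from \eqref{eq:geods2}. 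The fundamental theorem of space curves (as reviewed in the proof of Proposition \ref{prop:mono}) yields an orientation-preserving isometry $I_0$ with $\tilde\x(t) = I_0 \x(t+T/2)$; setting $I := I_0 M$ gives $\tilde\x(t+T/2) = I_0\x(t+T) = I\x(t)$. The back-track identity $\tilde\y(t+T/2) = I\y(t)$ follows from $\tilde\y=\y$ together with the covariance $I_*\v(t) = \tilde\v(t+T/2)$.

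Finally, $I^2 = M$ follows from a clean computation invoking $\Phi^2 = \mathrm{id}$ (which reads $\tilde{\tilde\x} = \x$, $\tilde{\tilde\v} = \v$) and the monodromy of $\tilde\x$ (which equals $M$ since $I_0$ commutes with $M$, both preserving the $\p$-axis):
\[
I^2\x(t) = I\tilde\x(t+T/2) = I\x(t+T/2) - 2I_*\v(t+T/2) = \tilde\x(t+T) - 2\tilde\v(t+T) = M\bigl(\tilde\x(t)-2\tilde\v(t)\bigr) = M\x(t).
\]
Non-planarity of $\x$ then forces $I^2 = M$ as isometries of $\R^3$. Since $M$ is a screw motion about the $\p$-axis (Proposition \ref{prop:screw}) and $I$ commutes with $M$, the axis of $I$ coincides with that of $M$. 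The main obstacle will be the case $a=1$ of the shift step, where $\kappa$ has inflection points and the Frenet frame is only defined up to the involution $(\T,\N,\B)\mapsto(\T,-\N,-\B)$; as in the proof of Proposition \ref{prop:mono}, the argument closes by choosing the frame consistently with the antiperiodicity $\kappa(t+T)=-\kappa(t)$.
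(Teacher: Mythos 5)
Your proof follows the same overall strategy as the paper (match parameters, identify the $T/2$ shift via critical points of $\kappa^2$, invoke the fundamental theorem of space curves, then compute $I^2$), but there is one genuine gap in the $I^2=M$ step and a couple of smaller slips worth noting.

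\textbf{The gap.} In the chain
\[ I^2\x(t) = I\tilde\x(t+T/2) = I\x(t+T/2) - 2I_*\v(t+T/2) = \tilde\x(t+T) - 2\tilde\v(t+T) = M\bigl(\tilde\x(t)-2\tilde\v(t)\bigr) = M\x(t),\]
the penultimate equality requires the monodromy of $\tilde\x$ to be $M$, which you justify by claiming ``$I_0$ commutes with $M$, both preserving the $\p$-axis.'' But that $I_0$ preserves the $\p$-axis (equivalently, that $I$ is a screw motion about the axis of $M$) is precisely part of what you are trying to prove, and you have not established it at this stage --- it would follow \emph{from} $I^2 = M$, not conversely. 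The fix is immediate and removes the detour: simply combine the last two steps using $\Phi^2 = \mathrm{id}$ directly, namely $\tilde\x(t+T) - 2\tilde\v(t+T) = \tilde{\tilde\x}(t+T) = \x(t+T) = M\x(t)$. With this replacement your computation is correct, and it then \emph{delivers} the commutativity ($IM = I\cdot I^2 = I^2\cdot I = MI$), which, together with the Chasles decomposition, gives the axis statement. (This is in spirit the paper's argument, which packages the step as equivariance of the flip $B_{\y}$ under isometries.)

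\textbf{Minor issues.} (1) Your formula $\tilde\x'' = -\x'' + 2F\x' + 2G\v$ is missing a term; the correct identity is $\tilde\x'' = -\x'' + 2F\x' + 2G\v - 4F^2\v$. Since you only apply it at a critical point of $\kappa$ where $F = 0$, the conclusion $\tilde\x''(t_0) = (a-1)\N(t_0)$ is unaffected, but the general formula as stated is wrong. (2) Your route to $\tilde\p = \p$ via the cotangent lift is legitimate and arguably cleaner than the paper's explicit computation ($\tilde\x'\cdot\tilde\v = -\x'\cdot\v$, etc.), but as written it leans on the reader knowing that the cotangent lift of $\Phi$ restricts to $T^*Q$ after the appropriate projection of $\r$ onto $\v^\perp$; a sentence making this explicit would help. (3) Your step identifying the shift as $T/2$ via $\tilde\kappa(t_0) = |a-1|$ is the same idea as the paper's Lemma on critical points of $\kappa^2$ (bicycle correspondence swaps maxima and minima of $\kappa^2$), executed through a slightly more computational route. (4) You correctly flag the $a=1$ antiperiodic case as needing care with the choice of Frenet frame, but you leave it as a sketch; the paper works it out explicitly following the proof of its monodromy proposition, and that level of detail is needed here too.
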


\begin{proof} Similarly to the proof of Proposition \ref{prop:mono}, to show that $\tilde\x(t+T/2)$ and $\x(t)$ are related by a proper isometry it is enough  to show that  their curvature and torsion functions coincide: 
$$
\tilde\kappa(t+T/2)=  \kappa(t), \ \tilde\tau(t+T/2)=\tau(t) \mbox{ for all }t.
$$

\begin{lemma}\label{lemma:param} $\tilde\x(t)$ and $\x(t)$  have $\tilde\p = \p$ and the same parameter values, $ (\tilde a, \tilde b)=(a,b)$.
\end{lemma}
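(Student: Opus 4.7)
\medskip\noindent
The plan is to reduce the claim to the single identity $\tilde\p=\p$ between constant vectors in $\R^3$, and then read off both parameters from Theorem~\ref{thm:main}(c).

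\medskip\noindent
\emph{Step 1: lift the bicycle correspondence $\Phi$ to $T^*Q$.} Starting from the solution $(\x,\v,\p,\r)$ of \eqref{eq:ham} that projects to the original geodesic, I will propose the lift of $(\tilde\x,\tilde\v)=(\x-2\v,-\v)$ given by
\[
\tilde\p := \p, \qquad \tilde\r := -\r - 2\p + 2(\p\cdot\v)\,\v ,
\]
motivated by the cotangent lift of the involution $(\x,\v)\mapsto(\x-2\v,-\v)$ on $Q$, corrected by a multiple of the annihilator covector $\v\,d\v$ so that the $T^*Q$-constraint $\tilde\r\cdot\tilde\v=0$ is preserved. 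I would then verify that $(\tilde\x,\tilde\v,\tilde\p,\tilde\r)$ satisfies \eqref{eq:ham}. The checks $|\tilde\v|=1$ and $\tilde\r\cdot\tilde\v=0$ are immediate from $\r\cdot\v=0$; the first two equations of \eqref{eq:ham} reduce to short identities using $\x'=\p+\r$ and $\v'=\p+\r-(\v\cdot\p)\v$; $\tilde\p'=0$ is trivial; and the $\tilde\r'$ equation is the main algebraic check, which follows by substituting the formula for $\r'$ from \eqref{eq:ham} and simplifying with $\r\cdot\v=0$. This establishes $\tilde\p=\p$.

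\medskip\noindent
\emph{Step 2: read off the parameters.} With $\tilde\p=\p$ in hand, Theorem~\ref{thm:main}(c) immediately gives $\tilde a^2+\tilde b^2 = |\tilde\p|^2 = |\p|^2 = a^2+b^2$. For $\tilde b$ itself I use the formula $\tilde b = -(\tilde\x_0'\times\tilde\v_0)\cdot\tilde\p$. A short calculation, using $\tilde\v_0=-\v_0$ and $\tilde\x_0' = \x_0'-2\v_0'$ with $\v_0' = \p+\r_0-(\v_0\cdot\p)\v_0$, yields $\v_0'\times\v_0 = (\p+\r_0)\times\v_0 = \x_0'\times\v_0$, so that $\tilde\x_0'\times\tilde\v_0 = \x_0'\times\v_0$ and hence $\tilde b=b$. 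Combining gives $\tilde a^2 = a^2$, and the convention $a,\tilde a\geq 0$ (Lemma~\ref{prop:ref}) forces $\tilde a = a$.

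\medskip\noindent
\emph{Main obstacle.} The least obvious step is the correct form of $\tilde\r$: the naive cotangent-lift guess $\tilde\r = -\r - 2\p$ is incompatible with the $T^*Q$-constraint $\tilde\r\cdot\tilde\v=0$, and one must include the annihilator correction $2(\p\cdot\v)\v$ before equations \eqref{eq:ham} can hold. Once this adjustment is made, the remaining work is mechanical, the only nontrivial identity being the $\tilde\r'$ equation.
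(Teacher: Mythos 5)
Your proof is correct, and it takes a genuinely different route from the paper's. The paper argues directly at the level of the geodesic: from $\r\cdot\v=0$ it derives $\p\cdot\v=\x'\cdot\v$, combines this with $\tilde\x'\cdot\tilde\v=-\x'\cdot\v$ to get $(\tilde\p-\p)\cdot\v=0$, differentiates to get $(\tilde\p-\p)\cdot\x'=0$, and invokes non-planarity to conclude $\tilde\p=\p$; then it computes $\tilde b=\det(\p,\tilde\v,\tilde\x')=\det(\p,-\v,-\x')=b$ and deduces $\tilde a=a$ from $|\p|^2=a^2+b^2$. Your approach instead exhibits an explicit cotangent lift of $\Phi$ to $T^*Q$, namely $\tilde\p=\p$, $\tilde\r=-\r-2\p+2(\p\cdot\v)\v$, and verifies it solves \eqref{eq:ham}; I checked the $\tilde\r'$ equation and it indeed works. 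This is more computational in Step~1 but also more structural: it shows $\Phi$ lifts to a Hamiltonian symmetry of the geodesic flow, whereas the paper's argument is more economical but does not exhibit that. Note that once Lemma~\ref{lemma:sR} is in hand, the verification of \eqref{eq:ham} in your Step~1 is actually guaranteed by general theory — the cotangent lift of any \sR\ isometry is a symplectomorphism preserving the Hamiltonian $H=\frac12\sum P_i^2$ — so the only genuine calculation required is the correct formula for $\tilde\r$ as a point of $T^*Q$ (the annihilator correction you identified). One minor remark on Step~2: your computation $\tilde\x_0'\times\tilde\v_0=\x_0'\times\v_0$ matches the paper's determinant identity and is correct; both rely on $\v_0'\times\v_0=\x_0'\times\v_0$, which is where the no-skid condition enters.
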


\bp  We will first show that $\tilde \p = \p$. From  $|\v|=1$ we have $\v\cdot\v'=0$, hence 
\be\label{eq:xpv}\tilde \x'\cdot\tilde \v=(\x'-2\v')\cdot(-\v)=-\x'\cdot\v+(\v\cdot\v)'=-\x'\cdot\v. 
\ee
Then, from the first equation of \eqref{eq:ham}, $\x'=\p+\r$,  and $\r\cdot\v=0$,  we have 
$$\p\cdot\v=(\x'-\r)\cdot\v=\x'\cdot\v, \mbox{ and similarly, }\tilde\p\cdot\tilde\v=\tilde\x'\cdot\tilde\v.
$$
 Thus, 
$$ \p\cdot \v = - \tilde \p \cdot \tilde \v =\tilde\p\cdot\v\so 0=(\tilde\p-\p)\cdot\v.
$$
Differentiating, 
$$
 \p\cdot \x' = \tilde \p \cdot \x'\so (\tilde\p-\p)\cdot\x'=0.
$$
For non-planar  $\x(t)$, the last equation  implies $\p = \tilde\p.$ 

Next, $\tilde\x=\x-2\v$ implies, by equation \eqref{eq:noskid}, 
$$\tilde\x' =\x'-2\v'= - \x' + 2(\x'\cdot\v)\v,$$ 
hence
$$ \tilde b = \det(\p, \tilde \v, \tilde \x') = \det( \p, - \v, -\x') = b.
$$

Finally,   $|\p|^2=a^2+b^2$  and  $|\tilde\p|^2=\tilde a^2+\tilde b^2$, hence from $\tilde\p=\p$ and $\tilde b=b$ it follows that $\tilde a=a$.
\ep



\begin{lemma}\label{lemma:cp}  The critical points of $\kappa^2$ are the points where $\x'\cdot\v=0$, and they are maxima or minima. 
Bicycle correspondence maps the critical points of $\kappa^2$ to those of $\tilde\kappa^2$, interchanging maxima and minima. 
\end{lemma}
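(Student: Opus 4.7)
The plan is to derive a single identity linking critical points of $\kappa^2$ to the zeros of $F:=\x'\cdot\v$, and then transfer everything to $\tilde\x$ via the identity $\tilde F=-F$ from equation \eqref{eq:xpv}. Starting from $G'=-\kappa\kappa'$ (equation \eqref{eq:Gp}) and $G'=F(2G-|\p|^2-1)$ (equation \eqref{eq:G}), I would substitute $2G=1+a^2-\kappa^2$ (equation \eqref{eq:2G}) and $|\p|^2=a^2+b^2$ to collapse $2G-|\p|^2-1=-(\kappa^2+b^2)$, yielding
\[ (\kappa^2)'=2F(\kappa^2+b^2). \]
Since $\x$ is non-planar, Proposition \ref{prop:planar}(a) forces $b\neq 0$, so $\kappa^2+b^2>0$ and the critical points of $\kappa^2$ are precisely the zeros of $F$. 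By Proposition \ref{prop:range}, $\kappa^2$ is a non-constant periodic function oscillating between its two extremes, so these critical points alternate as maxima and minima.

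The same identity applied to $\tilde\x$, with $\tilde b=b$ (Lemma \ref{lemma:param}), gives $(\tilde\kappa^2)'=2\tilde F(\tilde\kappa^2+b^2)$, and since $\tilde F=-F$ by equation \eqref{eq:xpv}, the critical point sets of $\kappa^2$ and $\tilde\kappa^2$ coincide. To check that bicycle correspondence interchanges maxima with minima, I would compute $\tilde\kappa$ at such a point directly. From $\tilde\x=\x-2\v$ and $\v'=\x'-F\v$ (equation \eqref{eq:noskid}), differentiation gives $\tilde\x'=-\x'+2F\v$, and differentiating once more yields $\tilde\x''=-\x''+2F'\v+2F\v'$ with $F'=\x''\cdot\v+1-F^2$. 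At a critical point, $F=0$, so these reduce to the uniform identities $\tilde\x''=-\x''+2F'\v$ and $F'=\x''\cdot\v+1$, and $\tilde\x$ is still arc-length parametrized by Lemma \ref{lemma:sR}, so $\tilde\kappa=|\tilde\x''|$.

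Plugging in Proposition \ref{prop:bt}, at a maximum with $\kappa=1+a$ and $\v=-\x''/(1+a)$, one computes $F'=-a$ and $\tilde\x''=(1-a)\v$, so $\tilde\kappa=|1-a|$, which is the minimum value of $\tilde\kappa$ by Proposition \ref{prop:range}. A parallel computation at a minimum, using the remark after Proposition \ref{prop:bt}, gives $\tilde\kappa=1+a$. The main technical subtlety lies in the degenerate case $a=1$, where minima of $\kappa^2$ occur at inflection points and $\kappa$ itself vanishes; Remark \ref{rmrk:FS} then gives $\v=\pm\B$, so $\x''\cdot\v=0$, $F'=1$, and $\tilde\x''=2\v$, yielding $\tilde\kappa=2=1+a$ as required. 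Treating this boundary case uniformly through the single identity $\tilde\x''=-\x''+2F'\v$ is the most delicate step; once it is accepted, the interchange of maxima and minima follows in all cases.
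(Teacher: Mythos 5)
Your proof is correct, and the first half (deriving $(\kappa^2)'=2F(\kappa^2+b^2)$ and identifying critical points of $\kappa^2$ with zeros of $F:=\x'\cdot\v$) is essentially identical to the paper's argument. Where you diverge is in showing that maxima and minima are interchanged. The paper's route is a one-line sign argument: since $\tilde F=-F$, the derivatives $(\kappa^2)'$ and $(\tilde\kappa^2)'$ always have opposite signs, so at an isolated critical point the sign change $+/-$ for $\kappa^2$ (a maximum) becomes $-/+$ for $\tilde\kappa^2$ (a minimum), and vice versa. Your route instead directly computes $\tilde\kappa$ at a critical point using Proposition~\ref{prop:bt} and the formula $\tilde\x''=-\x''+2F'\v$ (valid when $F=0$, with $F'=\x''\cdot\v+1$), obtaining $\tilde\kappa=|1-a|$ at a maximum of $\kappa^2$ and $\tilde\kappa=1+a$ at a minimum, and then reads off from Proposition~\ref{prop:range} which kind of critical point these values correspond to. You also carefully handle the degenerate case $a=1$ via the inflection-point remark, getting $\tilde\kappa=2$ at the zeros of $\kappa$. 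Both approaches are sound; the paper's sign argument is shorter and avoids invoking Proposition~\ref{prop:bt} and its after-remark, while your computation has the virtue of producing the actual value of $\tilde\kappa$ at the correspondence points, which is more information than the lemma strictly requires.
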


\begin{proof}
From equations \eqref{eq:G}-\eqref{eq:Gp} we get
$$(\kappa^2)'=2\kappa\kappa'=-2G'=-2F(2G-|\p|^2-1)=2(\x'\cdot\v)(\kappa^2+b^2). 
$$
Now $\kappa^2+b^2$ is non-vanishing:  if it does then  $b =0$ and $\kappa $  vanishes, which cannot happen, since planar non-linear geodesic front tracks are non-inflectional elasticae (see Proposition \ref{prop:planar}). Thus critical points of $\kappa^2$ are points where $\x'\cdot\v$ vanishes. 

It follows from equation \eqref{eq:kbounds} that critical points of $\kappa^2$ are maxima or minima, where $\kappa^2=(1\pm a)^2.$  Now,  from equation \eqref{eq:xpv} we have  $\tilde \x'\cdot\tilde\v=-\x'\cdot\v,$  so $(\kappa^2)'$ and $(\tilde\kappa^2)'$ have opposite signs. The critical points of $\kappa^2$ are isolated, hence  the derivative changes sign at a critical point, from positive to negative at a maximum, and from negative to positive at a minimum. Similarly for  $\tilde\kappa^2$. Since the derivatives of these functions have opposite signs, it follows that when $\kappa^2$ is at a maximum   $\tilde\kappa^2$ is at a minimum, and vice-versa, as needed. 
 \end{proof}
 




\mn{\bf  Proof of  Proposition} \ref{prop:bc}(a). By Lemma \ref{lemma:param}, $\kappa$ and $\tilde\kappa$  have the same period (or anti-period for $a=1$), $T=\tilde T$, and differ at most by a parameter shift. By Lemma \ref{lemma:cp}, the parameter shift is $T/2$ (or any odd multiple of $T/2$). By equation \eqref{eq:geods2}, the curvature determines the torsion, hence $\tau, \tilde\tau$ are also related by the same  parameter shift. By the `fundamental theorem of space curves', there is a proper isometry mapping $\x(t)$ to $\tilde\x(t+T/2)$. 

The  statement $\y(t+T/2)=I(\y(t))$ now follows: $(I(\x(t)), I(\y(t)))$ and $(\x(t+T/2), \y(t+T/2))$ are  geodesics with the same non-linear front track; by Proposition \ref{prop:bt}, they  have the same back track.

\mn{\bf  Proof of  Proposition} \ref{prop:bc}(b). For any bicycle path $(\x(t), \y(t))$ we use the notation $\tilde\x(t):=B_{\y(t)}\x(t)=2\y(t)-\x(t)$ (`flipping of the front track $\x(t)$ with respect to the back track $\y(t)$'). This operation has the following obvious properties:
\begin{itemize}
\item $\x(t):=B_{\y(t)}\tilde\x(t)$.
\item For any isometry $I$,  $(I(\x(t)), I(\y(t)))$ is also a bicycle path and $I(\tilde\x(t))=B_{I(\y(t))}I(\x(t)).$
\item $\tilde\x(t+t_0)=B_{\y(t+t_0)}\x(t+t_0)$ for any $t_0\in\R$. 
\end{itemize}
Now applying these properties and the previous item, we calculate
\begin{align*}
I^2(\x(t))&=I(I(\x(t)))=I(\tilde\x(t+T/2))=I(B_{\y(t+T/2)}\x(t+T/2))\\
&=B_{I(\y(t+T/2))}I(\x(t+T/2))=B_{\y(t+T)}\tilde\x(t+T)\\
&=\x(t+T)=M\x(t).
\end{align*}
Thus $I^2=M$ since $\x(t)$ is non-linear.  
\ep

\subsubsection{A conjecture} Let $\Delta\theta\in[0,2\pi)$ and  $\Delta z>0$ be the rotation angle and  translation  of the monodromy  about its   axis. 
Proposition \ref{prop:bc} implies that the translation of $I$ is $\Delta z/2$, but it does not determine the rotation angle uniquely: it may be either $\Delta\theta/2$ or $\Delta\theta/2+\pi$.
Based on numerical evidence, and again assuming that the geodesic is not planar, we make the following conjecture.

\begin{conjecture}\label{conj:rot}
The rotation angle of $I$ is $\Delta\theta/2+\pi$,
see Figures \ref{fig:bc}.
\end{conjecture}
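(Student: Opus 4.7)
The strategy is to exploit a time-reversal symmetry of the geodesic around $t=T/2$. Since $u=\kappa^2$ satisfies an ODE of the form $(u')^2=P(u)$ (equation \eqref{eq:kbounds}) and $\tau$ is determined by $\kappa^2$ via \eqref{eq:geods2}, both $\kappa$ and $\tau$ are symmetric under $t\mapsto T-t$. The fundamental theorem of space curves then produces an orientation-preserving isometry $J:\R^3\to\R^3$ with $J(\x(t))=\x(T-t)$; evaluating at $t=T/2$ identifies $J$ as the rotation by $\pi$ about the line through $\x(T/2)$ in direction $\N(T/2)$. The crucial observation is that $\p\cdot\N(T/2)=-\kappa'(T/2)=0$ by \eqref{eq:pconst}, so $\N(T/2)\perp\hat p$ and $J_*\hat p=-\hat p$; hence $J$ acts on the plane $\Pi$ perpendicular to $\hat p$ as a reflection.

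Next, I would define $\theta(t)\in\R$ as the continuous lift of the angle of the projection $\T_\perp(t)$ of $\T(t)$ onto $\Pi$. Since $|\T\cdot\hat p|=|1+a^2-\kappa^2|/(2|\p|)\le a/|\p|<1$ whenever $b>0$, the projection $\T_\perp$ never vanishes and $\theta$ is well defined. Combining $J_*\T(t)=-\T(T-t)$ with the reflection action of $J$ on $\Pi$ gives $\theta(t)+\theta(T-t)=C$ as \emph{real numbers} under the continuous lift, so setting $t=0$ and $t=T/2$ yields $\theta(T/2)=\tfrac{1}{2}(\theta(0)+\theta(T))$ exactly. Now place $t=0$ at a maximum of $\kappa$; by Proposition \ref{prop:bt}, $\v(0)=-\N(0)$, and the identity $I_*\v(0)=-\v(T/2)$ from Proposition \ref{prop:bc}, together with $\tilde\x'(T/2)=-\T(T/2)$ (using that $\x'\cdot\v$ vanishes at critical points of $\kappa$), gives $I_*\T(0)=-\T(T/2)$. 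The rotation angle of $I$ about $\hat p$ therefore satisfies
\[
\theta_I\ \equiv\ \theta(T/2)-\theta(0)+\pi\ \equiv\ \tfrac{1}{2}\bigl(\theta(T)-\theta(0)\bigr)+\pi \pmod{2\pi}.
\]

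Writing $\theta(T)-\theta(0)=\Delta\theta+2\pi k$ for an integer winding number $k$, the conjecture $\theta_I\equiv \Delta\theta/2+\pi\pmod{2\pi}$ reduces to showing that $k$ is \emph{even}. Since $\T_\perp$ never vanishes on the connected parameter domain $\{a>0,\ b>0\}$, the integer $k$ is locally constant in $(a,b)$, so it suffices to compute $k$ in a single case. The natural choice is the planar limit $b\to 0^+$, where $\theta(T)-\theta(0)$ and $\Delta\theta$ both tend to $0$, suggesting $k=0$. The main obstacle is to make this limiting analysis rigorous: as $b\to 0$ the projection $\T_\perp(t)$ degenerates onto a line segment in $\Pi$ that passes through the origin, so a delicate perturbative argument is needed to show that the thin oval traced by $\T_\perp$ for small $b>0$ has winding number $0$ around the origin, rather than $\pm 1$. (A modest additional adjustment is also required for the antiperiodic case $a=1$, where one must take the two branches of the Frenet frame described in Remark \ref{rmrk:FS} and apply the fundamental theorem of space curves to the extended frame.)
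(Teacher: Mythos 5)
This statement is not proved in the paper at all: the authors present it as a \emph{conjecture}, supported only by numerical evidence (Figure~\ref{fig:bc}), so there is no paper proof to compare against. Your attempt is thus the only candidate proof, and it contains a real gap.

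Your overall strategy is sound and appealing. Starting $t=0$ at a curvature maximum, the time-reversal symmetry $\kappa(t)=\kappa(T-t)$, $\tau(t)=\tau(T-t)$ does give a rotation $J$ by $\pi$ about the line through $\x(T/2)$ with direction $\N(T/2)$; the identity $\p\cdot\N=-\kappa'$ from \eqref{eq:pconst} does put that axis in the plane $\Pi=\p^\perp$, so $J_*|_\Pi$ is a reflection and the lift relation $\theta(t)+\theta(T-t)=C$ follows; using $\tilde\x'(T/2)=-\T(T/2)$ and $I_*\T(0)=\tilde\x'(T/2)$ then yields $\theta_I\equiv\tfrac12\bigl(\theta(T)-\theta(0)\bigr)+\pi\pmod{2\pi}$. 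Up to here the argument is correct, modulo the two technical caveats you yourself flag (the degenerate limit $b\to 0^+$ and the antiperiodic case $a=1$).

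The gap is in the last step. Writing $\theta(T)-\theta(0)=\Delta\theta+2\pi k$ with $\Delta\theta\in[0,2\pi)$, you claim that because $\T_\perp$ never vanishes, the integer $k$ is locally constant in $(a,b)$, so that a single computation suffices. That inference is false. Non-vanishing of $\T_\perp$ only makes the real number $\theta(T)-\theta(0)$ a continuous function of $(a,b)$; the integer $k=\bigl\lfloor(\theta(T)-\theta(0))/2\pi\bigr\rfloor$ is \emph{not} locally constant, precisely because $\Delta\theta$ is required to lie in $[0,2\pi)$: whenever $\theta(T)-\theta(0)$ crosses a multiple of $2\pi$, $\Delta\theta$ jumps by $2\pi$ and $k$ jumps by one, changing parity. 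So continuity plus one base-point computation cannot establish ``$k$ even''. What one really needs is the \emph{uniform} bound $\theta(T)-\theta(0)\in[0,2\pi)$ for all $a,b>0$ (equivalently $k\equiv 0$), and this is exactly the quantitative content that makes the statement a genuine conjecture rather than a soft continuity argument; nothing in your write-up addresses it. (Indeed, the self-consistency of the conjecture with the continuity of $\theta_I$ already forces $\theta(T)-\theta(0)$ to stay within a single band $[2\pi m,2\pi(m+1))$, so the conjecture is \emph{equivalent} to that bound plus the identification $m=0$ from the planar limit.) Until that inequality is proved — presumably via the explicit elliptic-integral formula \eqref{eq:thetamon} — the proof is not complete.
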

\begin{figure}[h]\centering
\includegraphics[width=.9\textwidth]{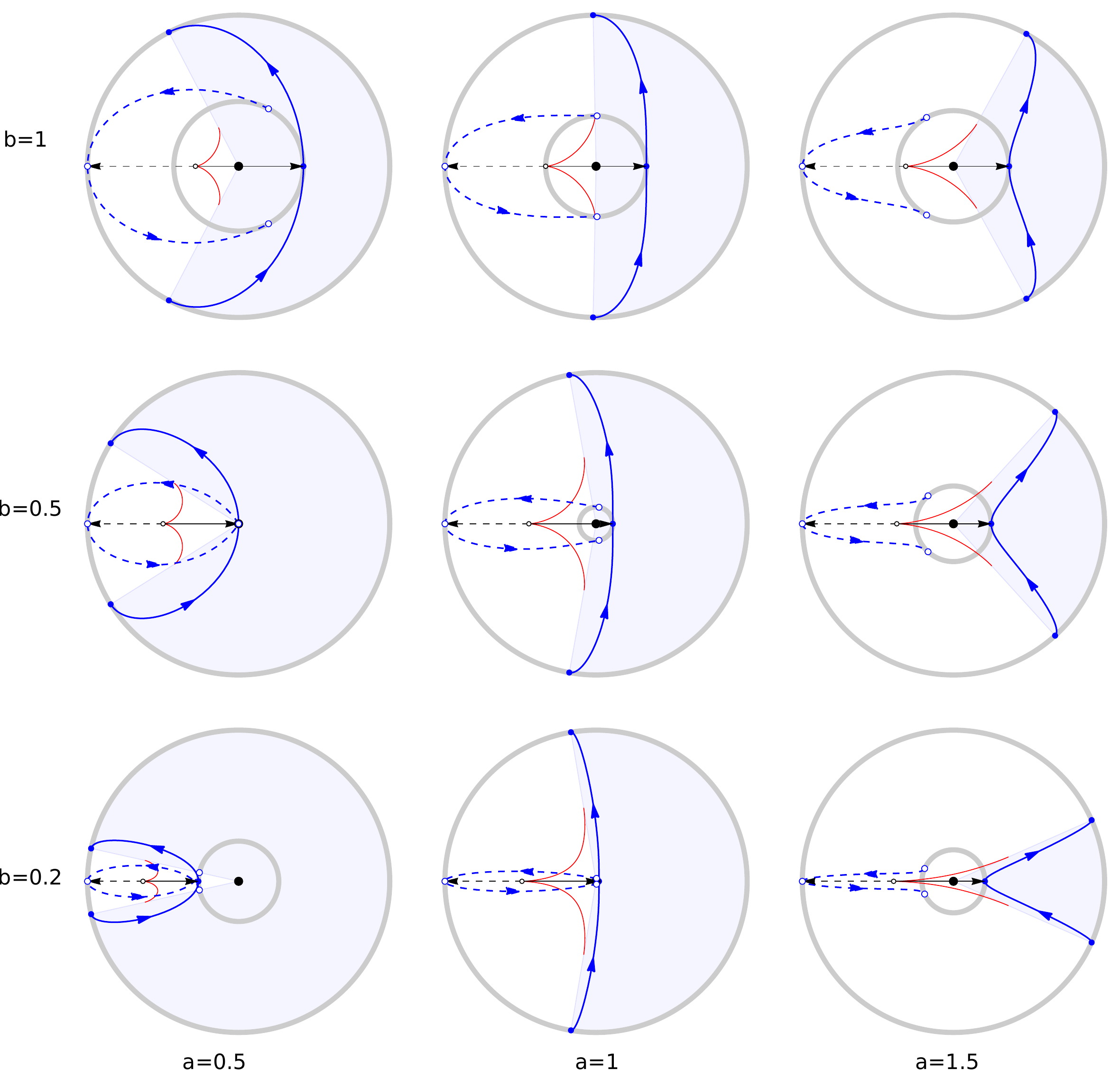}
\caption{Numerical evidence supporting Conjecture \ref{conj:rot}. Bicycle correspondence of  geodesics, projected onto $\p^\perp$, is shown for various values of the parameters $a,b$. The solid blue curve is $\x(t)$, the dashed curve is $\tilde\x(t)$, the red curve is their common back track $\y(t)$, all drawn in the range $0\leq t\leq T$, between two successive points of maximum  distance of $\x(t)$  to the rotation axis. The rotation angle  of the monodromy is  marked by the darkened sector. Also marked is the bicycle correspondence between $\x(T/2)$ and  $\tilde\x(T/2)$. 
} \label{fig:bc}
\end{figure}

\subsection{Global minimizers}
Bicycle geodesics, by definition,  have critical length among bicycle paths  connecting  two given  placements of the bike frame. In particular, some of them  are the {\em minimizing }bicycle paths. We shall not study them in detail but will only find the {\em global minimizers}, namely, the bike   paths $(\x(t), \y(t))$,   $t\in\R$, which are minimizers for any of their finite subsegments. 

There are two obvious candidates: those whose front tracks $\x(t)$ are straight lines or Euler's solitons. Are there any other ones? 

The answer is no. The reason is that, as we know, all other bicycle geodesics are {\em quasi-periodic}:  $\x(t+T)=M(\x(t))$,  $\y(t+T)=M(\y(t))$  for some $T>0$ and all $t$. Here is the detailed argument. 

\begin{prop}\label{prop:global}
A quasi-periodic non-linear bicycle path is not a global minimizer. 
\end{prop}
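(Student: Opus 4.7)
The plan is to derive a contradiction by comparing, for $N$ large, the arc length $NT$ of the geodesic segment on $[0, NT]$ with the length of a shorter bicycle path joining its endpoints $(\x_0, \v_0)$ and $M^N(\x_0, \v_0)$. By Proposition~\ref{prop:screw}, $M$ is a screw motion whose axis is parallel to $\p$; set $\hat{\p} := \p/|\p|$. The crucial quantitative input is the strict inequality $\Delta z < T$, which follows from unit-speed parametrization of the front track:
\[
\Delta z \;=\; \hat{\p}\cdot(\x(T)-\x(0)) \;=\; \int_0^T \hat{\p}\cdot \x'(t)\,dt \;\leq\; \int_0^T |\x'(t)|\,dt \;=\; T,
\]
with equality only if $\x'(t)\equiv \hat{\p}$, i.e., $\x(t)$ is a straight line. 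Non-linearity therefore forces $\Delta z < T$ strictly, which provides the linear deficit driving the whole argument.

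Next I would construct a competing bicycle path in three phases. Phase (i): a bounded-length maneuver from $(\x_0, \v_0)$ to some configuration $(\u, \hat{\p})$, where $\u$ lies on the axis of $M$ and the frame is aligned with that axis. Phase (ii): a straight-line bike path along the axis from $(\u, \hat{\p})$ to $(\u + N\Delta z\,\hat{\p}, \hat{\p})$; this is a genuine bike path, with front and back tracks both linear segments collinear with $\hat{\p}$, of length exactly $N\Delta z$. Phase (iii): a maneuver from that configuration to $M^N(\x_0, \v_0)$. Since $M^N$ acts as rotation by $N\Delta\theta$ about the axis composed with translation by $N\Delta z\,\hat{\p}$, the family $\{M^N(\x_0, \v_0)\}_{N\in\Z}$, after subtracting axial translation, is contained in a fixed compact subset of $Q$; hence phase (iii) costs at most some $C$ independent of $N$. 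Summing, the competing path has length $N\Delta z + C$, and for $N > C/(T-\Delta z)$ this is strictly less than $NT$, contradicting the assumption that the geodesic is a global minimizer.

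The main obstacle is justifying the uniform bound on the maneuvering costs in phases (i) and (iii). This rests on the no-skid distribution $\D$ being bracket-generating on $Q$: for $n=3$ a direct computation of brackets of the frame $\{\xi_i\}$ from \eqref{eq:xi} shows that $\D + [\D,\D] = TQ$ everywhere (for instance at $\v = (1,0,0)$ one already recovers $\partial_{v_2}, \partial_{v_3}$ from $[\xi_1,\xi_2], [\xi_1,\xi_3]$), so the Chow-Rashevsky theorem yields finite, continuous sub-Riemannian distances and in particular uniform bounds over compact subsets of $Q$. Alternatively, one can build phase (i) explicitly by alternating short circular arcs -- which realize arbitrary frame rotations at bounded front-track cost -- with straight-line segments, thereby reaching any target configuration in a compact neighborhood at bounded cost; phase (iii) is then handled by the same uniform bound applied to the precompact set $\{M^N(\x_0, \v_0)\}_{N}$ modulo axial translation.
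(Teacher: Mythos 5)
Your proof is correct, and it takes a genuinely different route from the paper's. The paper never needs the screw-motion structure of the monodromy: since $M$ is an isometry, the increments $\x(kT)-\x((k-1)T)$ all have length $L := |\x(T)-\x(0)|$, and $L<T$ strictly because a non-linear unit-speed front track is not a line segment over one period; hence $|\x(nT)-\x(0)|\le nL$ and $|\y(nT)-\y(0)|\le nL+2$. The competitor is then completely explicit and elementary: rotate the frame about the fixed back wheel $\y(0)$ so it points toward $\y(nT)$ (a singular path of cost $\le\pi$), ride straight to $\y(nT)$ (cost $\le nL+2$), then rotate again so the front wheel lands on $\x(nT)$ (cost $\le\pi$); this beats $nT$ once $n>(2+2\pi)/(T-L)$. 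Your version instead identifies the screw axis via Proposition~\ref{prop:screw} and rides along it, bounding the flanking maneuvers via Chow--Rashevsky (your bracket computation showing $\D+[\D,\D]=TQ$ is correct, and the axial translation invariance of the \sR\ metric, needed to make the uniform bound independent of $N$, does hold since the Euclidean group acts by \sR\ isometries). What the paper buys is self-containedness: no Chow--Rashevsky, no screw decomposition, just the triangle inequality and a $\le\pi$ reorientation cost. What yours buys is a cleaner identification of the linear growth rate with the axial translation $\Delta z$, which is arguably more geometric. Note also that $\Delta z=(\p/|\p|)\cdot(\x(T)-\x(0))\le|\x(T)-\x(0)|=L<T$, so your deficit $T-\Delta z$ is at least the paper's $T-L$; the same non-linearity hypothesis drives both.

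One small gap to flag: Proposition~\ref{prop:screw} is stated only for non-planar front tracks, so as written your argument omits the quasi-periodic planar geodesics (the non-inflectional elasticae, whose monodromy is a pure translation). The fix is routine --- for a translation the ``axis'' is any line in the translation direction with zero rotation, and the translation direction is $\p$ by Proposition~\ref{prop:planar}(a) --- but it should be said, since those cases are covered by the statement you are proving.
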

\begin{proof} 
The argument is  taken from  \cite[pages 4675-6]{2D}, whose Figure 11 is reproduced here with minor changes as Figure \ref{fig:shortcut}. 

 \begin{figure}[h]\centering
\includegraphics[width=0.7\textwidth]{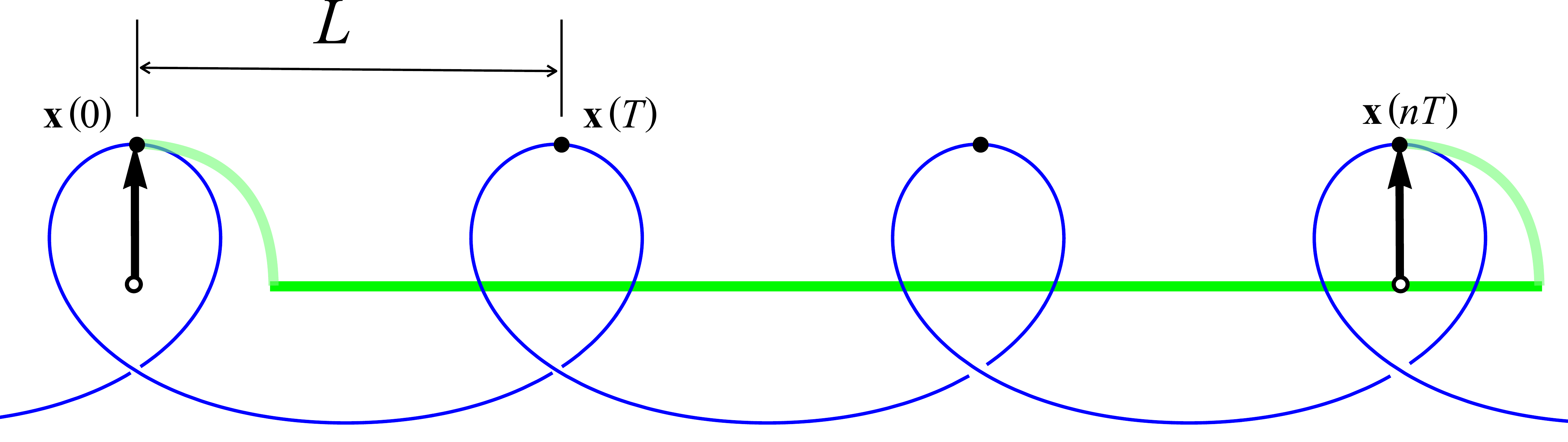}\quad 
\caption{A shortcut.} \label{fig:shortcut}
\end{figure}

Since $\x(t)$ is non-linear, there are two values of time, $T$ apart, say $0$ and $T$, so that the segment of $\x(t)$ between $\x(0)$ and $\x(T)$ is not a line segment. It follows that   $L:=|\x(T)-\x(0)|< T$.   

Now take the front track segment  with end points at $\x(0),\ \x(nT)$ for some positive integer $n$. Its length is $nT$ and the distance between end points is at most $nL$. 

Let the back track be $\y(t)$. Then $|\y(T)-\y(0)|\leq nL +2.$ So, for $n$ big enough, one can do better then $nT$ by
\begin{itemize}
\item reorienting the bike at $t=0$, with fixed back wheel, so it points to $\y(nT)$;
\item ride straight towards  $\y(nT)$;
\item reorient the bike so its front wheel is at $\x(nT).$ 
\end{itemize}
Steps 1 and 3 cost at most some fixed amount  independent of $n$, and step 2 costs at most  $nL+2$. So total cost is at most $nL + c$, for some $c$ independent of $n$. This is less then $nT$ for $n$ big enough. 
\end{proof}
\appendix

\section{Explicit formulas}

One can get explicit formulas for bicycling geodesics in $\R^3$ as a special case of those for \Kr, as  in \cite[\S4]{LS}, but we found it actually easier to obtain  them directly. (The  formulas  of \cite{LS} require  first solving  complicated algebraic equations for the parameters $p,w$ appearing  in those formulas.) We use mostly  the notation of \cite{RW}. 

\begin{prop}\label{prop:exp} 
\begin{enumerate}[(a)]
\item The curvature of a non-linear geodesic front track, parametrized by arc length, is   given by 
%
\be
\label{eq:sn}
\kappa^2(t)=(1+a)^2 -4a\,{\rm sn}^2\left(\omega t, k\right)
\ee
where 
\be\label{eq:params}\omega={\sqrt{ (a+1)^2+b^2}\over 2}, \qquad k^2 ={4a\over  (a+1)^2+b^2}. 
\ee
Here ${\rm sn}(u,k)$ is the Jacobi elliptic function with  modulus  $k$ (or parameter $m=k^2$). 

\item  For $a\ne 1$  the period of the curvature $\kappa$ is the same as the period of $\kappa^2$, given by
$$T = {2K(k)\over \omega},$$
where  $K(k)$  is the  complete elliptic integral of the first kind. 

\item For $a=1$ (constant torsion front tracks)  the front track's curvature and its period are 
$$
\kappa(t)=2{\rm cn}\left(\omega t,k\right), 
 \quad 2T = \frac{4K(k)}{\omega}.$$
\end{enumerate}

\end{prop}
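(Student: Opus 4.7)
The plan is to start from the first-order ODE for $u=\kappa^2$ derived in the proof of Proposition \ref{prop:range}, namely
\[ (u')^2 = P(u),\qquad P(u) = (u+b^2)\bigl[(1+a)^2 - u\bigr]\bigl[u - (1-a)^2\bigr], \]
and reduce it to the defining ODE of a Jacobi elliptic function via a carefully chosen affine substitution. Since $u$ oscillates between the two non-negative roots $(1-a)^2$ and $(1+a)^2$ of $P$, the natural change of variable is
\[ u = (1+a)^2 - 4a\,s^2, \]
which sends the two turning points to $s=0$ and $s=\pm 1$ respectively.

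Substituting and simplifying, the three factors of $P(u)$ become $(1+a)^2 - u = 4a s^2$, $u - (1-a)^2 = 4a(1-s^2)$, and $u+b^2 = \alpha - 4as^2 = \alpha(1-k^2 s^2)$, where $\alpha := (1+a)^2 + b^2$ and $k^2 := 4a/\alpha$. Combined with $(u')^2 = 64 a^2 s^2 (s')^2$, the ODE becomes
\[ (s')^2 = \tfrac{\alpha}{4}\,(1-s^2)(1-k^2 s^2) = \omega^2 (1-s^2)(1-k^2 s^2), \]
with $\omega = \tfrac{1}{2}\sqrt{\alpha}$. This is exactly the ODE satisfied by $s(t) = \mathrm{sn}(\omega t + c, k)$ for some phase $c$. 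Choosing the origin of $t$ at a point where $\kappa^2$ attains its maximum $(1+a)^2$ (equivalently $s=0$) fixes $c=0$ and yields formula \eqref{eq:sn} with the parameters \eqref{eq:params}.

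For the periods, I would invoke the standard fact that $\mathrm{sn}(\cdot,k)$ has real period $4K(k)$, while $\mathrm{sn}^2(\cdot,k)$ has period $2K(k)$; hence $\kappa^2(t)$ has period $T = 2K(k)/\omega$. When $a \neq 1$, the bound $\kappa^2 \geq (1-a)^2 > 0$ lets us choose $\kappa > 0$ globally, so $\kappa$ shares the same period $T$. For $a=1$ the formula reads $\kappa^2 = 4\bigl(1-\mathrm{sn}^2(\omega t, k)\bigr) = 4\,\mathrm{cn}^2(\omega t, k)$, so $\kappa(t) = 2\,\mathrm{cn}(\omega t, k)$ up to a consistent choice of Frenet-Serret frame (cf.\ Remark \ref{rmrk:FS}); since $\mathrm{cn}(\cdot,k)$ has period $4K(k)$, the period of $\kappa$ is $4K(k)/\omega = 2T$, reflecting the period-doubling discussed earlier.

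There is no real obstacle here beyond bookkeeping: the substitution is forced by the shape of $P$, and everything reduces to matching the resulting cubic to the known Jacobi ODE. The one mildly delicate point is the $a=1$ case, where we must square-root $\kappa^2 = 4\,\mathrm{cn}^2$ across an inflection point; the fact that $\mathrm{cn}$ changes sign there matches the antiperiodicity $\kappa(t+T) = -\kappa(t)$ identified earlier, so the choice $\kappa = 2\,\mathrm{cn}(\omega t,k)$ is consistent with (either branch of) the Frenet-Serret frame.
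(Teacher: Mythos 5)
Your proof is correct and follows essentially the same route as the paper's: substitute $u = \kappa^2 = (1+a)^2 - 4as^2$ into $(u')^2 = P(u)$ to land on the Jacobi $\mathrm{sn}$ ODE, identify the parameters, and read off the periods. (The paper additionally rescales time via $x = \omega t$; you absorb the $\omega$ into the argument of $\mathrm{sn}$, which is equivalent.) Your derivation of part (c) via the identity $1-\mathrm{sn}^2 = \mathrm{cn}^2$ and the $4K(k)$ period of $\mathrm{cn}$ is in fact a bit more explicit than what the paper records, and your remark tying the sign of $\mathrm{cn}$ to the Frenet-Serret branch is a nice touch of bookkeeping.
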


\begin{proof}
As before, set $u:=\kappa^2$. Then $u(t)$ oscillates between the values 
$(1\pm a)^2,$ satisfying equation \eqref{eq:kbounds},  
 $$
 \left({du\over dt}\right)^2 = (u+ b^2)( (1 + a)^2 - u )(u - (1 - a)^2).
 $$
Making the change of  variables $(t,u)\mapsto (x,y)$,  where 
 $$u = (1+a)^2 -4a y^2, \quad x=\omega t, \quad\omega={\sqrt{ (a+1)^2+b^2}\over 2}, 
 $$
one finds that $y(x)$ satisfies
$$
\left(\frac{dy}{dx}\right)^2 = (1-y^2)(1 - k^2y^2),
$$
where 
$$k^2={4a\over  (a+1)^2+b^2} .
$$
This is the ODE satisfied by the Jacobi elliptic function $y={\rm sn}(x,k)$,  with $y(0)=0$ and period $4K(k)$, where  $K(k)$ is the   complete elliptic integral of the first kind   with     {\em modulus}  $k$. Its square ${\rm sn}^2(x,k)$  has half that period, $2K(k)$. See  formula 22.13.1 and Table 22.4.2  of  \cite{RW}. \end{proof}

\begin{rmrk}
Equation  \eqref{eq:sn} of Proposition \ref{prop:exp} is  the same as in  \cite[page 614]{LS}, with $p^2=k^2$ given by equation \eqref{eq:params}  and 
$$w^2={(1+a)^2\over  (a+1)^2+b^2}.$$
\end{rmrk}

The torsion of the front tracks of the last proposition is given by equation \eqref{eq:geods2}. 

The curvature is periodic except for $a = 1, b = 0$, where one has $k = 1$ and $K(1) = \infty$. In this case $\x(t)$ is the Euler soliton and one has  
$$\kappa = 2{\rm cn}(t,1) = 2{\rm sech}(t).$$

We next give explicit formulas for the front tracks $\x(t)$ and their monodromy in cylindrical coordinates $r,\theta, z$ with respect to the rotation axis of $\K$.

\begin{prop}  \begin{enumerate}[{\rm (a)}]
\item
In  cylindrical coordinates $r,\theta, z$ with respect to the rotation axis of $\K$, the front track of a bicycling geodesic, with initial conditions $z(0) = 0$ and $\theta(0) = 0$,  is given by
\begin{align}
\label{eq:rcoords}
 r (t)&= {1\over  |\p|}\sqrt{A - 4a\,{\rm sn}^2(\omega t,k)},  \\
\label{eq:thetacoords}
\theta(t)& = {b\over 2 |\p|}\left[t + {B\over \omega}\, \Pi(\omega t, n, k)\right], \\
\label{eq:zcoords}
 z (t)&={1\over 2 |\p|}\left[ (|\p|^2 + 1) t - 4\omega\,E(\omega t,k)\right]
\end{align}
where
\[ 
 A =\frac{\left(|\p|^2+a\right)^2}{|\p|^2},\quad
 B = \frac{|\p|^2-a}{|\p|^2+a},\quad
 |\p| =\sqrt{ a^2 + b^2}, \quad n = \frac{4a}{A},
 \]
and $\omega, k$ are given  in equation \eqref{eq:params}. 

Here $E(x,k), \Pi(x,n,k)$ are the incomplete elliptic integrals of the second and third kind, given by 
\begin{align}
&E(x,k):=x-k^2\int_0^x{\rm sn}^2(s,k)ds, \label{eq:Exk}\\
& \Pi(x,n,k):=\int_0^x{ds\over 1-n\,{\rm sn}^2(s,k)}.\label{eq:Pxnk}
\end{align}

\item The monodromy is given by
\begin{align}
\label{eq:thetamon}
\Delta\theta& ={b\over \omega |\p|}  \left[ K(k) +  B\,\Pi(n, k)\right],\\
\label{eq:zmon}
\Delta z &= {1\over \omega |\p|}\left[(|\p|^2 + 1) K(k) - 4\omega^2 E(k)\right],
\end{align}
where $K(k), \ E(k):=E(K(k),k)$ and $\Pi(n,k):=\Pi(K(k),n,k)$ are the complete elliptic integrals of the first, second, and third kinds, respectively.
\end{enumerate}

\end{prop}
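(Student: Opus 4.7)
I will place the origin at $\x_1$ and orient coordinates so that $\hat z = \p/|\p|$, and then compute $r(t),\ z(t),\ \theta(t)$ in succession from the conserved quantities and the elliptic formula \eqref{eq:sn} for $\kappa^2$. The monodromy formulas then follow by evaluating at $t=T=2K(k)/\omega$ and invoking the standard quasi-periodicity of the incomplete elliptic integrals.

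The radial coordinate $r(t)$ comes essentially for free. Equation \eqref{eq:r} gives $r^2|\p|^2 = \kappa^2 + b^2 - b^2/|\p|^2$; substituting \eqref{eq:sn} and rearranging with $|\p|^2 = a^2+b^2$, the constant part simplifies to $(|\p|^2+a)^2/|\p|^2 = A$, yielding \eqref{eq:rcoords}. For $z(t)$, since $\hat z = \p/|\p|$, we have $z'(t) = \x'\cdot\p/|\p|$, which by equation \eqref{eq:2G} equals $(1+a^2-\kappa^2)/(2|\p|)$. Substituting \eqref{eq:sn} and using the definition \eqref{eq:Exk} to integrate $\int_0^t {\rm sn}^2(\omega s, k)\,ds = (\omega t - E(\omega t,k))/(\omega k^2)$, together with the identity $a/k^2 = \omega^2$ read off from \eqref{eq:params}, a one-line simplification gives \eqref{eq:zcoords}.

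The angular coordinate $\theta(t)$ is the main work. In cylindrical coordinates, a direct calculation of the $z$-component of $(\x-\x_1)\times\x'$ gives $r^2\theta' = (1/|\p|)\,\p\cdot[(\x-\x_1)\times\x']$. Using $(\x-\x_1)\times\p = \K - \delta\p$ from \eqref{eq:mag_geod}, this rewrites as $r^2\theta' = (1/|\p|)[\delta\,\x'\cdot\p - \x'\cdot\K]$. But $\x'\cdot\K = (\p+\r)\cdot(\r\times\v) = \p\cdot(\r\times\v) = -\p\cdot(\v\times\x') = -b$, while $\x'\cdot\p = (1+a^2-\kappa^2)/2$ and $\delta = -b/|\p|^2$. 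Plugging in and simplifying,
\[
\theta'(t) = \frac{b\bigl[(a+|\p|^2) - 2a\,{\rm sn}^2(\omega t,k)\bigr]}{|\p|\bigl[A - 4a\,{\rm sn}^2(\omega t, k)\bigr]}.
\]
The partial-fraction identity $(1+B)A = 2(|\p|^2+a)$ (which is exactly what the stated values of $A$ and $B$ are arranged to satisfy) lets one rewrite the bracketed ratio as $\tfrac12\bigl(1 + B/(1 - n\,{\rm sn}^2(\omega t,k))\bigr)$. Integrating via the definition \eqref{eq:Pxnk} of $\Pi$ yields \eqref{eq:thetacoords}.

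For part (b), set $t = T = 2K(k)/\omega$, so $\omega T = 2K(k)$. The quasi-periodicity relations $E(2K(k),k) = 2E(k)$ and $\Pi(2K(k),n,k) = 2\Pi(n,k)$ follow immediately by splitting the defining integrals into two equal halves over $[0,K(k)]$ and $[K(k),2K(k)]$ using the evenness of ${\rm sn}^2(\,\cdot\,,k)$ about $K(k)$. Substituting into \eqref{eq:thetacoords} and \eqref{eq:zcoords} produces \eqref{eq:thetamon} and \eqref{eq:zmon} directly. The only subtlety I anticipate is verifying the algebraic identity $(1+B)A = 2(|\p|^2 + a)$ that makes the $\theta$-integrand fit the $\Pi$-template; this is a short check from the stated definitions of $A$, $B$, and $n$, but it is the arithmetic heart of the proof and the place where the seemingly ad hoc constants $A,B,n$ are forced upon us.
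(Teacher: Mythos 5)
Your proposal is correct and follows essentially the same route as the paper: $r(t)$ from equation \eqref{eq:r}, $z(t)$ from equation \eqref{eq:pconst} integrated via the identity $a/k^2=\omega^2$, $\theta(t)$ from the relation $\K\cdot\T=-b$ expressed in cylindrical coordinates, and the monodromy from the quasi-periodicity of $E$ and $\Pi$ at $2K(k)$. The only cosmetic difference is that you compute $r^2\theta'$ directly as the $\p$-component of $(\x-\x_1)\times\x'$, whereas the paper obtains the same expression by dotting \eqref{eq:rel} with $\x'$ and substituting \eqref{eq:mag_geod}.
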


\begin{proof}
 From the proof of  Proposition \ref{prop:screw} we have  equation \eqref{eq:r}, 
\be \label{eq:req}
b^2+\kappa^2=|\p|^2r^2+{b^2\over |\p|^2},
\ee
from which one obtains equation \eqref{eq:rcoords} using equation \eqref{eq:sn} and $|\p| = \sqrt{a^2 + b^2}$.

Next, from equation \eqref{eq:pconst}, $2|\p| z'=1+a^2-\kappa^2,$ and upon substitution of equation \eqref{eq:sn}, we have
\be\label{eq:zp}|\p| z'=a\left[2{\rm sn}^2(\omega t,k)-1\right].
\ee
Equation \eqref{eq:zcoords}  follows by using formula \eqref{eq:Exk}.

Dotting both sides of \eqref{eq:rel} with $\x'$ and using the expression for $\K$ from \eqref{eq:mag_geod}, one obtains 
$$r^2|\p|\,\theta' = b\left( 1 - \frac{z'}{|\p|}\right),
$$
 from which, upon substitution of the above formulas \eqref{eq:zp} for $z'$ and \eqref{eq:req} for $ r$, we find
$$ |\p|\theta' = \frac{b}{2}\left[ 1 + \left(\frac{2(|\p|^2 + a)}{A} - 1\right)\frac{1}{1 - n \,{\rm sn}^2(\omega t,k)}\right],\mbox{ where } n := \frac{4a}{A}.
$$
Equation \eqref{eq:thetacoords} now follows from formula  \eqref{eq:Pxnk}.

Equations \eqref{eq:zmon}-\eqref{eq:thetamon}  now follow   by evaluating equations  \eqref{eq:thetacoords}-\eqref{eq:zcoords} at $t=T=2K/\omega$, and using   $E(2K(k),k)=2E(k),$
$\Pi(2K(k),n,k)=2\Pi(n,k).$
\end{proof}

\begin{rmrk}
 The explicit formulas \eqref{eq:rcoords}-\eqref{eq:zcoords}  are given for general Kirchoff rods in \cite{SH}, where they appear as equations (4.19a)-(4.19c).
\end{rmrk}

\end{document}